\documentclass[11pt]{article}
\usepackage[margin=1in]{geometry}
\usepackage{amsmath,amssymb,amsthm,mathtools}
\usepackage{enumitem}
\usepackage{hyperref}
\hypersetup{
  hidelinks,
  pdftitle={Layerwise Terminal Discrepancy for Reverse Heat on the Boolean Cube},
  pdfauthor={Yanjin Xiang and Zhihua Zhang},
  pdfsubject={A layerwise refinement for Talagrand's Boolean convolution problem},
  pdfkeywords={Talagrand convolution conjecture, Boolean heat semigroup, reverse heat, terminal testing discrepancy, anti-concentration}
}
\usepackage[nameinlink,noabbrev,capitalise]{cleveref}

\newtheorem{theorem}{Theorem}[section]
\newtheorem{proposition}[theorem]{Proposition}
\newtheorem{lemma}[theorem]{Lemma}
\newtheorem{corollary}[theorem]{Corollary}

\crefname{theorem}{Theorem}{Theorems}
\crefname{proposition}{Proposition}{Propositions}
\crefname{lemma}{Lemma}{Lemmas}
\crefname{corollary}{Corollary}{Corollaries}
\crefname{definition}{Definition}{Definitions}
\Crefname{theorem}{Theorem}{Theorems}
\Crefname{proposition}{Proposition}{Propositions}
\Crefname{lemma}{Lemma}{Lemmas}
\Crefname{corollary}{Corollary}{Corollaries}
\Crefname{definition}{Definition}{Definitions}

\AddToHook{env/theorem/begin}{\crefalias{section}{theorem}\crefalias{theorem}{theorem}}
\AddToHook{env/proposition/begin}{\crefalias{section}{proposition}\crefalias{theorem}{proposition}}
\AddToHook{env/lemma/begin}{\crefalias{section}{lemma}\crefalias{theorem}{lemma}}
\AddToHook{env/corollary/begin}{\crefalias{section}{corollary}\crefalias{theorem}{corollary}}
\AddToHook{env/assumption/begin}{\crefalias{section}{assumption}\crefalias{theorem}{assumption}}
\AddToHook{env/remark/begin}{\crefalias{section}{remark}\crefalias{theorem}{remark}}
\AddToHook{env/definition/begin}{\crefalias{section}{definition}\crefalias{theorem}{definition}}

\newcommand{\cube}{\{-1,1\}^n}

\newcommand{\E}{\mathbb E}
\newcommand{\Pp}{\mathbb P}
\newcommand{\R}{\mathbb R}

\newcommand{\1}{\mathbf 1}

\newcommand{\TV}{\operatorname{TV}}
\newcommand{\Law}{\operatorname{Law}}

\newcommand{\cA}{\mathcal A}

\newcommand{\cL}{\mathcal L}
\newcommand{\cG}{\mathcal G}
\newcommand{\cB}{\mathcal B}

\newcommand{\cS}{\mathcal S}
\newcommand{\eps}{\varepsilon}
\newcommand{\To}{{T_{\circ}}}
\newcommand{\StopT}{{\sigma_\theta}}

\newcommand{\bdel}{\bar\delta}
\newcommand{\flip}{\sigma}
\newcommand{\abs}[1]{\left|#1\right|}

\newcommand{\set}[1]{\left\{#1\right\}}

\renewcommand*{\d}{\mathop{}\!\mathrm{d}}	% differential

\title{Layerwise Terminal Discrepancy in Chen's Reverse-Heat Coupling on the Boolean Cube}
\author{
Yanjin Xiang\\
Peking University\\
\texttt{2401110086@stu.pku.edu.cn}
\and
Zhihua Zhang\\
Peking University\\
\texttt{zhzhang@math.pku.edu.cn}
}
\date{\today}

\begin{document}
\maketitle

\begin{abstract}
Recently, Chen \cite{Chen2026} proved that Talagrand's Boolean convolution conjecture
holds up to the dimension-free factor \((\log\log\eta)^{3/2}\), namely for every fixed \(\tau>0\),
\[
  \mu\{P_\tau f>\eta\|f\|_1\}
  \le C_\tau
  \frac{(\log\log\eta)^{3/2}}{\eta\sqrt{\log\eta}},
  \qquad \eta>e^3.
\]
We revisit the terminal testing-discrepancy step in Chen's perturbed
reverse-heat coupling.  Chen estimates this discrepancy globally in terms
of the remaining gap to the terminal level.  We keep the same coupling and
the same reverse-heat formulations, but localize the terminal discrepancy on each
remaining-gap layer before summing the layers.  This changes the fixed-time
anti-concentration cost from order
\((\log L)^{3/2}/\sqrt L\) to order \((\log L)/\sqrt L\), where
\(L=\log\eta\).  Consequently, we obtain a \((\log\log\eta)^{1/2}\) improvement as
\[
  \mu\{P_\tau f>\eta\|f\|_1\}
  \le C_\tau
  \frac{\log\log\eta}{\eta\sqrt{\log\eta}},
  \qquad \eta>e^3.
\]
\end{abstract}

\tableofcontents

\section{Introduction}

Talagrand's convolution conjecture
\cite{Talagrand1989,Talagrand2016} asks for the sharp dimension-free
regularization of nonnegative \(L^1\) functions $f\colon \{-1,1\}^n \to [0, \infty)$ under convolution by a
biased coin on the Boolean cube. In particular, given $t\ge 0$, let \(\xi_t=(\xi_t^{(1)}, \ldots,  \xi_t^{(n)})^T \) be a biased random vector on $\{-1, 1\}^n$ whose entries $\xi_t^{(i)}$ are independent and identically distributed (iid) and satisfy 
\[
\Pp(\xi_t^{(i)}=1)=\frac{1+e^{-t}}2, \qquad \Pp(\xi_t^{(i)}=-1)=\frac{1-e^{-t}}2.
\]
Let \((P_t)_{t\ge0}\) be the Boolean heat
semigroup
\[
  P_t f(x)=\E f(x\odot \xi_t),
\]
where $\odot$ denotes element-wise products.  
Let \(\mu\) be the uniform probability
measure on \(\cube\). For $f\colon \{-1,1\}^n \to \mathbb{R}$, define its $L_1(\mu)$-norm as $\|f\|_1:= \int{|f| \d \mu}$, and denote $\mu\{P_t f>\eta\|f\|_1\} := \Pp_{X \sim \mu} \{P_t f(X)>\eta\|f\|_1\}$.   

Talagrand conjectured that, for every fixed \(\tau>0\),
\[
  \mu\{P_\tau f>\eta\|f\|_1\} 
  \le \frac{C_\tau}{\eta\sqrt{\log\eta}},
  \qquad \eta>1,
\]
uniformly over the dimension and over \(f\ge0\). The conjectured order is
known to be optimal up to  the constant \(C_\tau\).

The Gaussian analogue, with the Ornstein--Uhlenbeck semigroup replacing
the Boolean heat semigroup, was initiated by Ball, Barthe, Bednorz,
Oleszkiewicz and Wolff \cite{BallBartheBednorzOleszkiewiczWolff2013} and
was resolved through the stochastic F\"ollmer-process approach of
Eldan--Lee \cite{EldanLee2018} and the refinement of Lehec
\cite{Lehec2016}.  In the Boolean setting in question,  Chen \cite{Chen2026}
recently proved the first dimension-free logarithmic improvement over
Markov's inequality:
\[
  \mu\{P_\tau f>\eta\|f\|_1\}
  \le C_\tau
  \frac{(\log\log\eta)^{3/2}}{\eta\sqrt{\log\eta}}, \quad \eta> e^3.
\]

Chen's seminal work \cite{Chen2026} has three main ingredients.  First, the weak-type estimate is reduced
to a fixed-time anti-concentration estimate for
\[
  \cA_t(I)=\nu_t\{\log P_t f\in I\},\qquad d\nu_t=P_t f\,d\mu .
\]
Second,  a reverse-heat process \(V_t\) is constructed, while from a start
time \(\theta\) a coupled process \(W_t\) whose jump rates are perturbed
until a terminal stopping time.  This coupling gives a global terminal
testing estimate recalled in \cref{app:chen-lemma2}, and an approximate
monotonicity statement at the tail level recalled in
\cref{lem:monotone}.  Third, the time-smoothed profile bound is proven, which is
recalled in \cref{app:chen-lemma4}.  Averaging over the start time
\(\theta\in[\To-1,\To]\) then controls the remaining-gap quantity appearing
in the global terminal estimate, which yields the
\((\log\log\eta)^{3/2}\) factor.

Our present paper refines only the terminal testing-discrepancy part of the above
scheme.  Chen \cite{Chen2026} used the global estimate, in which the main term has the form
\[
  \alpha
  \sqrt{\E\frac{\1_{\{R_\theta\ge \alpha\}}}{R_\theta+1}},
  \qquad
  L=\log\eta,\quad \alpha=\frac12\log L+1.
\]
After time averaging, this term is of size
\((\log L)^{3/2}/\sqrt L\). Alternatively, we keep an
\(\mathcal F_\theta\)-measurable localization throughout the Duhamel
identity, the terminal bridge expansion, and the Doob-transform energy
estimate.  Applying the resulting estimate to
\[
  G_r(\theta) :=\{r\le R_\theta<r+1\}
\]
gives the layerwise contribution
\[
  \lesssim_\tau
  \left(\frac{\alpha}{\sqrt r}+\frac{\alpha^2}{r}\right)
  \Pp(G_r(\theta)).
\]
The same time-smoothed profile estimate as in \cite{Chen2026}  now gives
\[
  \frac{\alpha}{L}\sum_{\alpha\le r\le L/2}r^{-1/2}
  +\frac{\alpha^2}{L}\sum_{\alpha\le r\le L/2}r^{-1}
  \lesssim \frac{\alpha}{\sqrt L}.
\]
This proves the weak-type bound stated in \cref{thm:main}, which will be presented in Section~\ref{sec:main-results}. 

% In Section~\ref{sec:preliminary} we give some preliminaries, including elementary heat estimates and time-%smoothed profile estimate.  In Section~\ref{sec:main-results} we give the improved results via  the %layerwise localization.

\subsection{Preliminaries}
\label{sec:preliminary}

%We first give some preliminaries, including elementary heat estimates and time-smoothed profile estimate. 
Before presenting our main result, we first give some  preliminaries, including multilinear extension of a function on $\{-1, 1\}^n$,
{elementary heat estimates} and {time-smoothed profile estimate}.

\subsubsection*{Multilinear extension of functions on $\{-1, 1\}^n$}

For \(z\in[-1,1]^n\), let \(\mu_z\) be the product probability measure on
\(\cube\) whose \(i\)-th coordinate has mean \(z_i\), that is,
\[
  \mu_z\{X_i=1\}=\frac{1+z_i}{2},
  \qquad
  \mu_z\{X_i=-1\}=\frac{1-z_i}{2}.
\]
If \(h:\cube\to\R\), its multilinear extension is the polynomial
\[
  H(z)
  =
  \E_{\mu_z}h(X)
  =
  \sum_{x\in\cube}
  h(x)\prod_{i=1}^n\frac{1+x_i z_i}{2},
  \qquad z\in[-1,1]^n.
\]
This is the unique polynomial that is affine in each coordinate separately
and agrees with \(h\) on the vertices of the cube.  Throughout the paper,
when a function originally defined on \(\cube\) is evaluated at a vector in
\([-1,1]^n\), we regard this as its multilinear extension.  We also write
\(\flip_i z\) for the vector obtained from \(z\) by changing the sign of its
\(i\)-th coordinate.

The Boolean heat semigroup is compatible with this convention.  If \(H\)
denotes the multilinear extension of \(h\), then for \(x\in\cube\),
\[
  P_t h(x)=H(e^{-t}x).
\]
Indeed, under the heat semigroup generated by
\(\frac12\sum_i(h(\flip_i x)-h(x))\), each coordinate keeps the mean
\(e^{-t}x_i\).  In particular, if \(h\ge0\) and \(h\not\equiv0\), then
\(H(z)>0\) for every \(z\in(-1,1)^n\); if \(0\le h\le1\) on the cube, then
\(0\le H(z)\le1\) on \([-1,1]^n\).

We shall also use the following elementary consequence frequently.  If
\(\phi\) is multilinear and \(Y_1,\ldots,Y_n\) are independent
\(\{-1,1\}\)-valued random variables with
\(\E Y_i=m_i\), then
\[
  \E\,\phi(Y)=\phi(m_1,\ldots,m_n).
\]
This is the form used later when the terminal Boolean bridge is replaced by
the product mean \(m_t(x,y,\zeta)\).

\subsubsection*{Elementary heat estimates}

\begin{lemma}[Edge-ratio bound]\label{lem:edge-ratio}
Let \(f\ge0\) and \(f\not\equiv0\), and define $g_t=\log (P_t f)$. Then for \(t>0\), \(x\in\cube\), and
\(i\in[n]\), we have
\[
  \frac{1-e^{-t}}{1+e^{-t}}
  \le
  \frac{P_t f(\flip_i x)}{P_t f(x)}
  \le
  \frac{1+e^{-t}}{1-e^{-t}}.
\]
Subsequently, we have 
\[
  |g_t(\flip_i x)-g_t(x)|
  \le
  \log\frac{1+e^{-t}}{1-e^{-t}}.
\]
\end{lemma}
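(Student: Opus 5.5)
The plan is to write \(P_t f\) coordinate by coordinate and compare the two values directly. Fix \(t>0\), \(x\in\cube\) and \(i\in[n]\), and put \(\rho=e^{-t}\in(0,1)\). By definition \(P_t f(x)=\E f(x\odot\xi_t)\), with the coordinates of \(\xi_t\) independent. Condition on the \(i\)-th output coordinate \(y_i=x_i\xi_t^{(i)}\). This gives
\[
  P_t f(x)=\frac{1+\rho}{2}\,A+\frac{1-\rho}{2}\,B,
\]
where \(A\ge0\) is the conditional expectation given \(y_i=x_i\) and \(B\ge0\) is the one given \(y_i=-x_i\). The remaining coordinates are distributed identically for \(x\) and \(\flip_i x\), since they only involve \(x_j\) with \(j\ne i\) and are independent of \(\xi_t^{(i)}\). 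Flipping \(x_i\) therefore just exchanges the weights:
\[
  P_t f(\flip_i x)=\frac{1-\rho}{2}\,A+\frac{1+\rho}{2}\,B.
\]
Equivalently, by the multilinear extension, \(H(z)\) is affine in \(z_i\), and the two values are \(H\) evaluated at \(z_i=\pm\rho x_i\) with the other coordinates fixed.

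Next, since \(A,B\ge0\), the elementary inequality
\[
  \frac{1-\rho}{1+\rho}\Bigl(\tfrac{1+\rho}{2}A+\tfrac{1-\rho}{2}B\Bigr)\le \tfrac{1-\rho}{2}A+\tfrac{1+\rho}{2}B
\]
holds termwise. The \(A\)-terms agree exactly. For the \(B\)-terms the left coefficient is \((1-\rho)^2/(2(1+\rho))\le(1+\rho)/2\). The symmetric inequality, with the roles of the two expressions swapped, gives the upper bound. Positivity is needed to divide: \(f\ge0\) and \(f\not\equiv0\) give \(P_t f(x)>0\), because every point of the cube receives positive probability when \(\rho<1\), as noted after the multilinear extension. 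Dividing by \(P_t f(x)\) yields the two-sided ratio bound.

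Finally, take logarithms. The ratio lies in \([\kappa^{-1},\kappa]\) with \(\kappa=(1+e^{-t})/(1-e^{-t})\), so \(|g_t(\flip_i x)-g_t(x)|\le\log\kappa\). There is no real obstacle here. The only points to check are that the other coordinates' law does not depend on \(x_i\), and the strict positivity of \(P_t f\).
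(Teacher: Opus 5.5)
Your proof is correct and is essentially the paper's argument. The paper applies Chen's edge-ratio bound (\cref{app:chen-lemma5}) at the point \(z=e^{-t}x\), where \(|z_i|=e^{-t}\) and \(P_tf(\sigma_i x)\) is the multilinear extension evaluated at \(\sigma_i z\); your conditioning on the \(i\)-th coordinate, equivalently the affine dependence of the extension on \(z_i\), is exactly the computation behind that cited bound, written out directly along with the strict positivity of \(P_tf\) needed to divide.
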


\begin{proof}
This is the edge-ratio part of Chen \cite[Lemma~5]{Chen2026}, recalled in
\cref{app:chen-lemma5}, applied to Chen's multilinear extension at the point
\(z=e^{-t}x\).  The logarithmic bound follows immediately after taking
logarithms.
\end{proof}

\begin{lemma}[Level-one inequality]\label{lem:level-one}
Let \(h \colon \cube\to\{0,1\}\), and let \(H\) be its multilinear extension.
For every \(z\in(-1,1)^n\), then we have
\[
  \sum_{i=1}^n(1-z_i^2)(\partial_i H(z))^2
  \le H(z)-H(z)^2.
\]
\end{lemma}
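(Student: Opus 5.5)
The plan is to read the left-hand side as a sum of squared Fourier–Walsh coefficients of \(h\) with respect to the biased product measure \(\mu_z\), and then to apply Bessel's inequality in \(L^2(\mu_z)\).

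\textbf{Step 1: derivative identity.} Fix \(z\in(-1,1)^n\), so that \(1-z_i^2>0\) for every \(i\). Recall that \(H(z)=\E_{\mu_z}h(X)\). Condition on the other coordinates and write \(A\), \(B\) for the conditional expectations of \(h\) on \(\{X_i=1\}\) and \(\{X_i=-1\}\); these do not depend on \(z_i\). Since \(H\) is affine in \(z_i\),
\[
H=\E\Bigl[\tfrac{1+z_i}{2}A+\tfrac{1-z_i}{2}B\Bigr],
\qquad\text{so}\qquad
\partial_i H(z)=\tfrac12\E[A-B].
\]
On the other hand, \(\E_{\mu_z}[X_i-z_i\mid\text{rest}]\) weighted by \(h\) gives
\[
\tfrac{1+z_i}{2}(1-z_i)A-\tfrac{1-z_i}{2}(1+z_i)B=\tfrac{1-z_i^2}{2}(A-B).
\]
Taking expectations yields
\[
\E_{\mu_z}\bigl[h(X)(X_i-z_i)\bigr]=(1-z_i^2)\,\partial_iH(z).
\]

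\textbf{Step 2: an orthonormal system.} Set \(\varphi_0\equiv1\) and
\[
\varphi_i(X)=\frac{X_i-z_i}{\sqrt{1-z_i^2}},\qquad i\in[n].
\]
Under \(\mu_z\) the coordinates are independent, each \(\varphi_i\) has mean zero and variance \(\E(X_i-z_i)^2=1-z_i^2\) divided by \(1-z_i^2\), i.e.\ variance one. Hence \(\{\varphi_0,\varphi_1,\dots,\varphi_n\}\) is orthonormal in \(L^2(\mu_z)\). By Step 1,
\[
\langle h,\varphi_i\rangle_{\mu_z}^2=(1-z_i^2)\,(\partial_iH(z))^2,
\qquad
\langle h,\varphi_0\rangle_{\mu_z}=H(z).
\]

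\textbf{Step 3: Bessel's inequality.} Bessel's inequality gives
\[
H(z)^2+\sum_{i=1}^n(1-z_i^2)(\partial_iH(z))^2\le \E_{\mu_z}h^2 .
\]
Because \(h\) is \(\{0,1\}\)-valued, \(h^2=h\), so \(\E_{\mu_z}h^2=H(z)\). Rearranging gives the claimed inequality.

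There is no real obstacle here. The only points needing care are the derivative identity in Step 1 and the use of \(z\) in the open cube, which makes the normalization \(\sqrt{1-z_i^2}\) legitimate.
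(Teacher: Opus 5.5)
Your proof is correct and is the paper's argument. You identify $\sqrt{1-z_i^2}\,\partial_i H(z)$ as the level-one $\mu_z$-biased Fourier coefficient $\E_{\mu_z}[h\,\varphi_i]$ and bound the level-one weight by $\E_{\mu_z}h^2-H(z)^2=H(z)-H(z)^2$. You check that coefficient identity by hand and use Bessel on $\{1,\varphi_1,\dots,\varphi_n\}$, where the paper quotes the biased Fourier expansion and full Parseval, which makes your version self-contained.
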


\begin{proof}
This is the standard level-one inequality for biased Fourier analysis
\cite[Chapter~8]{ODonnell2014}.
Let \(\mu_z\) be the product measure with coordinate means \(z_i\).  The
biased Fourier expansion gives
\[
  (1-z_i^2)^{1/2}\partial_i H(z)
  =
  \widehat h_{\mu_z}(\{i\}).
\]
Summing over \(i\) and using Parseval gives the variance
\(\operatorname{Var}_{\mu_z}(h)=H(z)-H(z)^2\).
\end{proof}

\subsubsection*{Time-smoothed profile estimate}

\begin{lemma}[Time-smoothed anti-concentration]\label{lem:time-profile}
Let \(f:\{-1,1\}^n\to(0,\infty)\) satisfy \(\|f\|_1=1\). Put
\[
  u_s=P_s f,\qquad g_s=\log u_s,\qquad d\nu_s=u_s\,d\mu,
  \qquad \cA_s(I)=\nu_s\{g_s\in I\}.
\]
Then for every \(\ell>2\),
\[
  \int_0^\infty \cA_s((\ell,\ell+1])\,ds
  \le \frac{C}{\ell},
\]
where \(C\) is universal.
\end{lemma}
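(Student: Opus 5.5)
The plan is to prove the estimate with an entropy-type dissipation argument along the heat flow, choosing the test function so that the dissipation is concentrated near the level window \((\ell,\ell+1]\). Write \(\cL h=\frac12\sum_i(h(\flip_i\cdot)-h)\), so that \(\partial_s u_s=\cL u_s\). For a smooth \(\Phi\colon\R\to\R\) put \(F(u)=u\,\Phi(\log u)\) and \(Q(s)=\int F(u_s)\,d\mu=\E_{\nu_s}\Phi(g_s)\). Then
\[
  -Q'(s)=\frac14\,\E_\mu\sum_i\bigl(F'(u_s(\flip_i x))-F'(u_s(x))\bigr)\bigl(u_s(\flip_i x)-u_s(x)\bigr),
\]
and \(F'(u)=(\Phi+\Phi')(\log u)\). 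If \(\Phi+\Phi'\) is nondecreasing, every summand is nonnegative. Integrating in \(s\) then gives
\[
  \int_0^\infty(-Q'(s))\,ds\le Q(0)-\lim_{s\to\infty}Q(s),
\]
and the limit is \(\Phi(0)\) because \(u_s\to\|f\|_1=1\).

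The next step is the choice of \(\Phi\). I would take \(\Phi\) to be a truncated quadratic, namely \(\Phi(y)=\min(y_+,\ell+2)^2\), smoothed near the corners. Then \(Q(0)\le(\ell+2)^2\) without any entropy assumption on \(f\), and \(\Phi(0)=0\). On the window, \((\Phi+\Phi')'\approx 2y\approx 2\ell\). By the edge-ratio bound of \cref{lem:edge-ratio}, the increments \(g_s(\flip_i x)-g_s(x)\) are bounded for \(s\ge s_0\), so a mean-value argument over each edge gives
\[
  -Q'(s)\gtrsim \ell\,\E_{\nu_s}\Bigl[\1_{\{g_s\in(\ell-1,\ell+2]\}}\,|\nabla g_s|^2\Bigr],
\]
where \(|\nabla g|^2=\sum_i(g(\flip_i x)-g(x))^2\). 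Integrating in time yields
\[
  \int_0^\infty\E_{\nu_s}\bigl[\1_{\{g_s\approx\ell\}}\,|\nabla g_s|^2\bigr]\,ds\lesssim\ell.
\]
Short times \(s<s_0\) need separate care. They would be handled by the same identity, since \(\Phi+\Phi'\) is monotone regardless of \(s\), or by \(\cA_s\le1\) combined with a trivial bound.

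The main obstacle is converting this gradient-weighted time integral into a bound on \(\int_0^\infty\cA_s\,ds\) itself. No pointwise lower bound on \(|\nabla g_s|\) is available. The loss also goes the wrong way: the computation above gives \(\ell\) where \(1/\ell\) is needed, so the dissipation step must be combined with a transport estimate.

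The first thing I would try is a second identity for the time derivative of \(g\):
\[
  \partial_s g=\frac{\cL u}{u}=\cL g+\frac12\sum_i\bigl(e^{\Delta_i g}-1-\Delta_i g\bigr),
\]
with \(\Delta_i g=g(\flip_i x)-g(x)\). This shows that, under \(\nu_s\), the level \(g_s\) drifts toward \(0\). The drift is quantified by \(\E_{\nu_s}[k(g_s)\,\partial_s g_s]\) for a monotone weight \(k\) supported near level \(\ell\). Applying \(\frac{d}{ds}\E_{\nu_s}K(g_s)\) with \(K'=k\) and \(K\) bounded by \(O(1/\ell)\), for instance \(K(y)=\frac1\ell\min(y_+,\ell+1)\), the boundary terms are \(O(1/\ell)\). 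The bulk term splits into a piece proportional to \(\cA_s((\ell,\ell+1])\), coming from the deterministic unit drift per unit level, and a gradient piece. That gradient piece is controlled by \(\frac1\ell\times\) the \(\ell^{-2}\)-rescaled version of the dissipation bound. I expect balancing these two pieces to be the hard step.

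If this balancing fails, the fallback is the reverse-heat (Doob transform) picture. Run the reversed process \(Y\), whose marginals are \(\nu_s\). Along it, \(e^{-g_s(Y_s)}\) is a bounded supermartingale. A gambler's-ruin/occupation-time estimate for the \(\ell\)-th level then bounds the expected time spent in \((\ell,\ell+1]\) by \(C/\ell\), using the bounded jumps from \cref{lem:edge-ratio}.
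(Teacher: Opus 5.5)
Your proposal does not prove the lemma. The step you flag as ``the hard step'' is the whole content of the estimate, and neither route you sketch supplies it. (The paper gives no proof here either; it imports the statement from Chen's Lemma~4 \cite{Chen2026}.)

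The obstruction is structural. For every $s$-independent $\Psi$,
\[
  \frac{d}{ds}\E_{\nu_s}\Psi(g_s)=\int\cL u_s\,(\Psi+\Psi')(g_s)\,d\mu=-\mathcal E\bigl(u_s,(\Psi+\Psi')(g_s)\bigr),
\]
where $\mathcal E(a,b)=\tfrac14\sum_i\E_\mu\bigl[(a(\flip_i x)-a(x))(b(\flip_i x)-b(x))\bigr]$. Every term of this Dirichlet form carries a factor $u_s(\flip_i x)-u_s(x)$. So functionals of this form, your $Q$ and your $K$ alike, only control gradient-weighted occupation such as $\int_0^\infty\E_{\nu_s}[\1_{\{g_s\approx\ell\}}|\nabla g_s|^2]\,ds$.

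The ``deterministic unit drift per unit level'' that was supposed to produce a term proportional to $\cA_s((\ell,\ell+1])$ does not exist. Indeed, $\partial_s g_s=\cL u_s/u_s$ has no sign pointwise, and $\E_{\nu_s}[\partial_s g_s]=\int\partial_s u_s\,d\mu=0$. The decrease of $\E_{\nu_s}g_s$ is itself the Dirichlet form $-\mathcal E(u_s,\log u_s)$. Hence there is nothing to balance the gradient piece against.

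What is missing is a lower bound on the \emph{speed} of the level: on average over the space-time region where $\nu_s$ sits at level $\ell$, you need $|\nabla g_s|^2\gtrsim\ell$. This is exactly where $1/\ell$ comes from. Take (a positive perturbation of) $f=2^k\1_{\{x_i=a_i,\ i\le k\}}$ with $k\gg\ell$. The typical $\nu_s$-level is $\approx\frac k2e^{-2s}$, so it moves at rate $\approx2\ell$ in $s$ when it equals $\ell$. The points at that level have $|\nabla g_s|^2\approx8\ell$, and the time spent in $(\ell,\ell+1]$ is $\approx1/(2\ell)$. A bound of this kind must exploit the time structure of the heat flow. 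Examples would be an explicitly $s$-dependent functional, or the factor $e^{-(T-t)}$ built into the scores $S_i(t,x)$, which makes the quadratic-variation rate $\approx2\sum_iS_i^2$ grow in $t$ unless the unscaled score energy decays. Nothing in your argument provides it.

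Your fallback fails for the same reason. Along the reverse process, $e^{-g_{T-t}(V_t)}=1/u_{T-t}(V_t)$ is in fact a martingale: it is $d\mu/d\nu_{T-t}$ evaluated at $V_t$. So, measured in quadratic variation, the level $g_{T-t}(V_t)$ behaves like Brownian motion with drift $+\frac12$. A gambler's-ruin or occupation-time estimate therefore bounds the \emph{quadratic-variation} time spent in $(\ell,\ell+1]$ by $O(1)$, not $C/\ell$. It says nothing about the real time $ds$ that the lemma measures. \cref{lem:edge-ratio} caps jump sizes but gives no lower bound on jump rates, so it cannot convert one clock into the other.

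There are two secondary problems.
\begin{enumerate}[label=(\roman*)]
\item Your truncated quadratic does not satisfy the monotonicity you use. Since $(\Phi+\Phi')'=e^{-y}(e^{y}\Phi')'$, you need $e^{y}\Phi'$ nondecreasing, so $\Phi'$ cannot be cut off to $0$ at $\ell+2$. Letting $\Phi'$ decay like $e^{-y}$ repairs this. A bounded $\Phi$ with $\Phi+\Phi'$ a clipped ramp even gives the gradient-weighted bound $O(1)$ instead of $O(\ell)$.
\item Short times are not handled. The edge-ratio bound degenerates as $s\downarrow0$, so your mean-value step needs $s\ge s_0$. Meanwhile $\cA_s\le1$ only gives $\int_0^{s_0}\cA_s\,ds\le s_0$, which is $O(1/\ell)$ only if $s_0\lesssim1/\ell$. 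In that regime the increments of $g_s$ are bounded only by about $\log(2\ell)$.
\end{enumerate}
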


\begin{proof}
This is Chen's time-smoothed anti-concentration profile estimate
\cite[Lemma~4]{Chen2026}, recalled in \cref{app:chen-lemma4}, after the
notational identification above.
\end{proof}

\subsection{The improved result and reduction to anti-concentration}
\label{sec:main-results}

Here and later, constants denoted by \(C\) are universal, while constants denoted by
\(C_\tau\) may depend on \(\tau\), but never on
\(n,f,L,\eta,T\), or \(\theta\); both may change from line to line.

Let \(P_t=e^{t\cL}\), where
\[
  \cL h(x)=\frac12\sum_{i=1}^n\bigl(h(\flip_i x)-h(x)\bigr).
\]
We write \(u_t=P_t f\), \(g_t=\log u_t\), and
\[
  d\nu_t=u_t\,d\mu,
  \qquad
  \cA_t(I)=\nu_t\{g_t\in I\}.
\]
By homogeneity we assume \(\|f\|_1=1\).  The stochastic notation below is
written for strictly positive \(f\), so that \(g_t\) and the reverse score
are everywhere defined.  If \(f\equiv0\), the argument  is trivial.  For a
general nonnegative \(f\not\equiv0\), apply the argument to
\[
  F_\eps=\frac{f+\eps}{\|f+\eps\|_1}.
\]
Then \(P_\tau F_\eps(x)\to P_\tau f(x)/\|f\|_1\) for every \(x\).  Because
\(\{P_\tau f/\|f\|_1>\eta\}\subseteq\liminf_{\eps\downarrow0}
\{P_\tau F_\eps>\eta\}\), Fatou's lemma transfers the weak-type estimate
from \(F_\eps\) to \(f\). We now state our improved result in the following theorem.

\begin{theorem}[Layerwise reverse-heat bound]\label{thm:main}
For every \(\tau>0\) there exists
\(C_\tau<\infty\) such that, for every \(n\ge1\), every
\(f:\cube\to\R_+\), and every \(\eta>e^3\),
\[
  \mu\{x:P_\tau f(x)>\eta\|f\|_1\}
  \le C_\tau
  \frac{\log\log\eta}{\eta\sqrt{\log\eta}}.
\]
\end{theorem}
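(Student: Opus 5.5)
We follow Chen's overall scheme and change only the terminal testing-discrepancy step. By homogeneity and the \(F_\eps\) approximation above, take \(f>0\) with \(\|f\|_1=1\), and put \(L=\log\eta\) and \(\alpha=\frac12\log L+1\).

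\textbf{Step 1 (reduction).} As in Chen \cite{Chen2026}, the weak-type bound of \cref{thm:main} reduces to a fixed-time anti-concentration estimate of the form
\[
  \cA_{T}\bigl((L-\alpha, L]\bigr)\lesssim_\tau \frac{\alpha}{\sqrt L}
\]
at a suitable time \(T\) comparable to \(\tau\). Indeed, by Markov's inequality applied level by level, \(\mu\{P_\tau f>\eta\}\) is controlled by \(\eta^{-1}\) times the \(\nu\)-mass of a window of \(\log P_\tau f\) near \(L\). The tail-level monotonicity (\cref{lem:monotone}) handles the passage between the window and the tail. Since \(\alpha\asymp\log L\), the displayed estimate gives exactly \(\log\log\eta/(\eta\sqrt{\log\eta})\).

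\textbf{Step 2 (localized coupling).} Run Chen's reverse-heat process \(V_t\), which has law \(\nu_t\)-marginals, and the perturbed process \(W_t\) started at \(\theta\in[\To-1,\To]\) and stopped at the terminal time. Here \(R_\theta\) is the remaining gap \(L-g_\theta(V_\theta)\). We redo Chen's global terminal testing estimate (\cref{app:chen-lemma2}), but insert the indicator of an \(\mathcal F_\theta\)-measurable event \(G\) throughout:
\begin{itemize}[label={}]
\item \emph{Duhamel identity.} Multiplying by \(\1_G\) commutes with the conditional expectations after time \(\theta\).
\item \emph{Terminal Boolean bridge.} We replace the bridge by the product mean \(m_t\) and use multilinearity of the test function. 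The edge-ratio bound (\cref{lem:edge-ratio}) controls the rate perturbations.
\item \emph{Doob-transform energy estimate.} We bound the energy through the level-one inequality (\cref{lem:level-one}) applied to the indicator test function \(h\), evaluated conditionally on \(\mathcal F_\theta\).
\end{itemize}
Combining these through Cauchy--Schwarz, now carried out on \(G\), we aim to show that the discrepancy on \(G=G_r(\theta)\) is
\[
  \lesssim_\tau \Bigl(\frac{\alpha}{\sqrt r}+\frac{\alpha^2}{r}\Bigr)\Pp(G_r(\theta)).
\]
The first term comes from \(\alpha\sqrt{\E[\1_G/(R_\theta+1)]\,\Pp(G)}\). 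The second comes from the drift/variance accounting over a gap of size \(r\), where the required perturbation per unit time is of order \(\alpha/r\).

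\textbf{Step 3 (summing layers).} Layers with \(r<\alpha\) cost at most \(\Pp(R_\theta<\alpha)\), which is controlled as in Chen's argument. Layers with \(r>L/2\) are negligible by the choice of \(T\). The events \(G_r(\theta)\) for \(\alpha\le r\le L/2\) correspond to \(g_\theta(V_\theta)\in(L-r-1,L-r]\), so \(\Pp(G_r(\theta))=\cA_\theta((L-r-1,L-r])\). Averaging over \(\theta\in[\To-1,\To]\) and applying \cref{lem:time-profile} with \(\ell=L-r-1\ge L/2-1\) gives \(\int\Pp(G_r(\theta))\,d\theta\lesssim 1/L\). Hence the total is
\[
  \frac{1}{L}\sum_{\alpha\le r\le L/2}\Bigl(\frac{\alpha}{\sqrt r}+\frac{\alpha^2}{r}\Bigr)
  \lesssim \frac{\alpha}{\sqrt L}+\frac{\alpha^2\log L}{L}
  \lesssim \frac{\alpha}{\sqrt L},
\]
which is what Step 1 requires.

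\textbf{Main obstacle.} The delicate step is Step 2. The Doob-transform energy bound must be localized without loss: we need the conditional quadratic variation on \(G\) to be bounded by \(\Pp(G)\) times a \(1/(r+1)\) factor, rather than globally. I would first try conditioning on \(\mathcal F_\theta\) and applying \cref{lem:level-one} pointwise to the conditional hitting probability of the terminal level, using the gap \(R_\theta\approx r\) on \(G\) to produce the \(1/r\) factor.
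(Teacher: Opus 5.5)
\textbf{The crux of Step 2 is misdiagnosed.} Your overall architecture matches the paper's: localize the terminal discrepancy on $G_r(\theta)$, obtain $(\alpha/\sqrt r+\alpha^2/r)\Pp(G_r(\theta))$, and sum against \cref{lem:time-profile}. Your middle-range summation is exactly the paper's computation. The problem is in the step you flag as the crux. You place the factor $1/r$ in the Doob-transform energy and propose to extract it from \cref{lem:level-one} applied to a hitting probability. That cannot work, because the bridge energy carries no dependence on the gap. For the dictator test $\phi(y)=(1+y_1)/2$ one has $|\partial_1\phi|=1/2$, $a_t\ge\sinh\tau/\sinh(\tau+1)$ and $\lambda_{t,1}^{\zeta}\ge C_\tau^{-1}$ on $[\theta,\To]$. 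Hence $\Psi_a^E\ge c_\tau(\To-\theta)\Pp(E)$ whatever $R_\theta$ is. Your own first-term heuristic $\alpha\sqrt{\E[\1_G/(R_\theta+1)]\,\Pp(G)}$ already treats this energy as being of size $\Pp(G)$.

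What one can and must prove is $\Psi_a^E+\Psi_b^E\le C_\tau(\Pp(E)+\cS_E)$. Level-one is applied to the terminal test $\phi$ at the bridge mean $m_t$, and it only gives $\Psi_b^E\le C_\tau\Pp(E)$. The term $\Psi_a^E$ is closed by It\^o's formula for $(q_t^\zeta)^2$ under $\Pp^\zeta$, followed by absorption. Together these give $D_E\le C_\tau(\cS_E+\sqrt{\cS_E\,\Pp(E)})$. All gap dependence then sits in $\cS_E$. It comes from Chen's conditional score-energy bound $\E[\int_\theta^{\StopT}\sum_iS_i^2\,dt\mid\mathcal F_\theta]\le C_\tau(R_\theta+\alpha+1)$ (\cref{lem:score-energy}). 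Combined with $\bdel^2=\alpha^2/(R_\theta+1)^2$, it yields $\cS_E\le C\alpha^2\Pp(E)/r$ for $E\subset G_r(\theta)$.

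You never state this estimate; your heuristic silently presupposes it. No variance bound for a Boolean function can replace it, because it is entropy-type. The drift of $g_{T-t}(V_t)$ is comparable to $\sum_iS_i^2$, and $g$ can only rise from $L-R_\theta$ to about $L+\alpha$ before $\StopT$.

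\textbf{Step 3 misjustifies both boundary ranges.}
\begin{itemize}
\item On $\{R_\theta<\alpha\}$ one has $\bdel=0$, so $W=V$ and these layers contribute nothing. The quantity $\Pp(R_\theta<\alpha)$ itself can be close to one (e.g.\ for a point mass in high dimension), and no such term appears in the argument. The band term $\cA_{T-\theta}((L-\alpha,L+\alpha])$ of \cref{lem:monotone} is a different quantity, bounded after averaging by $C\alpha/L$.
\item The range $\{R_\theta\ge L/2\}$ is not negligible in probability: for $f\equiv1$ it has probability one, and the choice of $T$ plays no role. Its discrepancy is small because $\bdel\le2\alpha/L$ there. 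So $\cS_E\le C_\tau\alpha^2/L$, and the crude bound $\Pp(E)\le1$ gives $O_\tau(\alpha/\sqrt L)$.
\end{itemize}

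\textbf{Step 1 misplaces \cref{lem:monotone}.} The theorem follows from the unit-window bound $\cA_\tau((L,L+1])\lesssim_\tau\alpha/\sqrt L$ by level-by-level Markov with weights $e^{-k}$. \cref{lem:monotone} instead enters the bootstrap $\cA_\tau((L,L+1])\le D_{\{R_\theta\ge\alpha\}}+\cA_{T-\theta}((L-\alpha,L+\alpha])+3/\sqrt L$. That inequality is what converts your Step~3 discrepancy bound into anti-concentration, and you never write it down. Your stated target, mass $\alpha/\sqrt L$ on a window of width $\alpha$, is a factor $\alpha$ stronger than this bootstrap delivers, and it is not needed. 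Note also that $V_t\sim\nu_{T-t}$, so $\Pp(G_r(\theta))=\cA_{T-\theta}((L-r-1,L-r])$.
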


Theorem~\ref{thm:main} can be reduced to the following fixed-time anti-concentration proposition, whose proof is built on our layerwise localization approach to the terminal testing estimate. 
We will give the layerwise localization approach in
\cref{sec:localized-discrepancy}, and then prove Proposition~\ref{prop:AC}  in Section~\ref{sec:proof-fixed-time}.

\begin{proposition}[Fixed-time anti-concentration]\label{prop:AC}
For every \(\tau>0\) there exists \(C_\tau<\infty\) such that, for every \(L\ge8\),
\[
  \cA_\tau((L,L+1])
  \le C_\tau \frac{\log L}{\sqrt L}.
\]
\end{proposition}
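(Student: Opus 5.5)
We follow Chen's scheme and keep his coupling, reverse-heat formulations and monotonicity lemma unchanged; only the terminal discrepancy step is localized.

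\textbf{Setup.} Fix \(\tau>0\), \(L\ge8\), \(\alpha=\frac12\log L+1\), and let \(\To\) be a terminal time comparable to \(\tau\). For each start time \(\theta\in[\To-1,\To]\) we run Chen's reverse-heat process \(V_t\), which has law \(\nu_t\) at time \(t\), together with the perturbed coupled process \(W_t\) started at \(\theta\). The remaining gap is
\[
R_\theta=L-g_\theta(V_\theta),
\]
the distance to the terminal level.

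\textbf{Step 1: reduction to a terminal comparison.} Using the approximate monotonicity at the tail level (\cref{lem:monotone}), we bound \(\cA_\tau((L,L+1])\) by a terminal probability of \(W\) plus the terminal testing discrepancy between \(V\) and \(W\). The probability term is \(O(\alpha/\sqrt L)\) by the choice of drift perturbation, exactly as in Chen. The only remaining task is to bound the discrepancy by \(C_\tau(\log L)/\sqrt L\) after averaging over \(\theta\).

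\textbf{Step 2: localized discrepancy (the main obstacle).} We prove an \(\mathcal F_\theta\)-localized version of Chen's global terminal testing lemma (\cref{app:chen-lemma2}). For an \(\mathcal F_\theta\)-measurable event \(G\), we insert \(\1_G\) into three places:
\begin{itemize}
\item the Duhamel identity for the difference of the terminal expectations;
\item the terminal bridge expansion, where the multilinear-mean replacement \(\E\phi(Y)=\phi(m)\) is used;
\item the Doob-transform energy estimate.
\end{itemize}
In the energy estimate, \cref{lem:level-one} controls the score of the indicator test function, and \cref{lem:edge-ratio} bounds the jump-rate perturbation by \(C_\tau\). The target is the following bound when \(G\subseteq G_r(\theta)=\{r\le R_\theta<r+1\}\):
\[
\text{discrepancy}\lesssim_\tau \frac{\alpha}{\sqrt r}\,\Pp(G)+\frac{\alpha^2}{r}\,\Pp(G).
\]
The delicate point is the Cauchy--Schwarz step. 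The energy must be computed conditionally on \(\mathcal F_\theta\), so that one factor becomes \(\Pp(G)\) rather than \(\sqrt{\Pp(G)}\cdot\sqrt{\cdot}\). On \(G\), the perturbation strength \(\alpha/(R_\theta+1)\) and the remaining duration are essentially deterministic, and this is what makes the conditional computation possible. I would first do the conditional version pathwise, via the strong Markov property at \(\theta\), and only then integrate over \(G\).

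\textbf{Step 3: summing layers.} The layers \(r<\alpha\) and \(r>L/2\) are handled as in Chen: the first by the stopping rule, the second by Markov's inequality giving a contribution \(O(1/\sqrt L)\). For \(\alpha\le r\le L/2\), the layer \(G_r(\theta)\) forces \(g_\theta(V_\theta)\in(L-r-1,L-r]\), so
\[
\Pp(G_r(\theta))=\cA_\theta((L-r-1,L-r]).
\]
Averaging over \(\theta\in[\To-1,\To]\) and applying \cref{lem:time-profile} with \(\ell=L-r-1\ge L/2-1\) gives \(\int\Pp(G_r(\theta))\,d\theta\le C/L\). Therefore the total discrepancy is at most
\[
\frac{C_\tau}{L}\sum_{\alpha\le r\le L/2}\Bigl(\frac{\alpha}{\sqrt r}+\frac{\alpha^2}{r}\Bigr)\lesssim_\tau \frac{\alpha\sqrt L+\alpha^2\log L}{L}\lesssim\frac{\log L}{\sqrt L}.
\]
The last inequality uses \(\alpha\asymp\log L\) and \(\alpha^2\log L/L\le C\log L/\sqrt L\). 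This yields the bound claimed in \cref{prop:AC}.
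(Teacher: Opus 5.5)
Your overall route matches the paper's: the bootstrap through \cref{lem:monotone}, an $\mathcal F_\theta$-localized discrepancy bound on each layer, and $\theta$-averaging against \cref{lem:time-profile}. However, two steps fail as written.

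The first is the tail $\{R_\theta\ge L/2\}$, which Markov's inequality cannot make small, because it typically carries most of the mass. For example, if $f=\frac12(1+h)$ with $h\ge0$ and $\|h\|_1=1$, then with $s=T-\theta$,
\[
  \Pp(R_\theta\ge L/2)=\nu_s\{u_s\le e^{L/2}\}\ge\tfrac12\,\mu\{u_s\le e^{L/2}\}\ge\tfrac12\bigl(1-e^{-L/2}\bigr).
\]
The profile lemma is unavailable there too: $\ell=L-r-1\le2$ once $r\ge L-3$, and $R_\theta$ is not bounded in terms of $L$. What works is your own localized bound using only $\Pp\le1$. On this event $\bdel^2(R_\theta+\alpha+1)\le4\alpha^2/L$, so by \cref{lem:score-energy} and \cref{lem:localized-discrepancy} its discrepancy is at most $C_\tau(\alpha/\sqrt L+\alpha^2/L)$. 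That is of order $\alpha/\sqrt L$, not $1/\sqrt L$, but it is still enough.

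The second is Step 1, which as written bounds $\cA_\tau((L,L+1])$ by a $W$-tail probability claimed to be $O(\alpha/\sqrt L)$. That tail is not small; it cancels. Since $\bdel=0$ on $\{R_\theta<\alpha\}$ (this comes from the amplitude, not the stopping rule), $V$ and $W$ coincide there. Hence $\Pp\{g_\tau(V_\To)>L\}\le\Pp\{g_\tau(W_\To)>L\}+D_{\{R_\theta\ge\alpha\}}$, and \cref{lem:monotone} then gives
\[
  \cA_\tau((L,L+1])\le D_{\{R_\theta\ge\alpha\}}+\cA_{T-\theta}((L-\alpha,L+\alpha])+\frac{3}{\sqrt L}.
\]
The initial-band term is not small for fixed $\theta$, so you must average it as well. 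Cover the band by at most $2\alpha+2$ unit intervals $(m,m+1]$ with $m\ge L-\alpha-1>2$; this gives $C\alpha/L$.

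In Step 2 you also leave unnamed the input that produces the factor $1/r$. It is not the remaining duration, since $\To-\theta\le1$ always. It is the conditional score energy up to the stopping time, \cref{lem:score-energy}:
\[
  \E\Bigl[\int_\theta^{\StopT}\sum_iS_i(t,V_{t-})^2\,dt\Bigm|\mathcal F_\theta\Bigr]\le C_\tau(R_\theta+\alpha+1).
\]
This gives $\cS_G\le C_\tau(\alpha^2/r)\Pp(G)$ for $G\subseteq G_r(\theta)$ with $r\ge\alpha/2$. The localized Duhamel, bridge and energy argument then closes through the self-bounding inequality $Y\le C_\tau\Pp(G)+C_\tau\cS_G^{1/2}Y^{1/2}$ for the bridge energy $Y$. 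That yields $D_G\le C_\tau(\cS_G+\sqrt{\cS_G\Pp(G)})$ and hence your layer bound, since each of the two Cauchy--Schwarz factors is then $O(\Pp(G))$.

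Finally, fix the time indexing: $V_\theta\sim\nu_{T-\theta}$, $R_\theta=[L-g_{T-\theta}(V_\theta)]_+$, and $\Pp(G_r(\theta))=\cA_{T-\theta}((L-r-1,L-r])$. This slip is harmless for the averaging, because \cref{lem:time-profile} integrates over all $s\ge0$.
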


\begin{proof}[To prove \cref{thm:main} from \cref{prop:AC}]
Put \(L=\log\eta\).  For \(L\ge8\), then
\begin{align*}
  \mu\{g_\tau>L\}
  &\le \sum_{k=0}^{\infty}
  \mu\{L+k<g_\tau\le L+k+1\}        \\
  &\le \sum_{k=0}^{\infty}
  e^{-(L+k)}\nu_\tau\{L+k<g_\tau\le L+k+1\} \\
  &\le C_\tau e^{-L}
  \sum_{k=0}^{\infty}e^{-k}\frac{\log(L+k)}{\sqrt{L+k}}
  \le C_\tau e^{-L}\frac{\log L}{\sqrt L}.
\end{align*}
Since \(e^L=\eta\), this proves the desired estimate for
\(\eta>e^8\).  The range \(e^3<\eta\le e^8\) is absorbed by Markov's
inequality after changing \(C_\tau\).
\end{proof}

\paragraph{The Paper Organization} \;
We use the same
reverse-heat formulations as in \cite{Chen2026}, and introduce them in Section~\ref{sec:reverse-heat}. 
We present our layerwise localization approach in
\cref{sec:localized-discrepancy}, and then complete the proof of Proposition~\ref{prop:AC}  in Section~\ref{sec:proof-fixed-time}.
In Section~\ref{sec:discussion} we explain where the remaining logarithm enters and why
removing it would require input beyond the present layerwise
Cauchy--Schwarz/profile argument.
For self-contained purpose, we also present some auxiliary lemmas given in \cite{Chen2026} in the appendix. 
%The layerwise localization and the summation argument are presented in
%\cref{sec:localized-discrepancy,sec:proof-fixed-time}.

\section{Reverse-heat coupling}\label{sec:reverse-heat}

%Throughout \cref{sec:reverse-heat,sec:localized-discrepancy} and the proof of \cref{prop:AC}, 
Here and later, we work with \(f>0\) and \(\|f\|_1=1\).  The
general nonnegative case is obtained by the approximation explained in
\cref{thm:main}.

Fix \(\tau>0\), choose \(T>\tau+1\), and set \(\To=T-\tau\).  The reverse
heat process \((V_t)_{0\le t\le T}\) is the time reversal of the forward
heat chain started from \(f\,d\mu\).  Thus \(V_t\sim\nu_{T-t}\), and its
generator is
\[
  \widetilde{\cL}_t h(x)
  =
  \frac12\sum_{i=1}^n
  \frac{P_{T-t}f(\flip_i x)}{P_{T-t}f(x)}
  \bigl(h(\flip_i x)-h(x)\bigr).
\]
It is convenient to write
\[
  S_i(t,x)
  =
  e^{-(T-t)}
  \frac{x_i\,\partial_i f(e^{-(T-t)}x)}
       {f(e^{-(T-t)}x)}.
\]
Then
\[
  \frac{P_{T-t}f(\flip_i x)}{P_{T-t}f(x)}
  =
  1-2S_i(t,x),
\]
and the \(i\)-th reverse jump rate is \(1/2-S_i(t,x)\).  The factor
\(e^{-(T-t)}\) is part of the definition of the score.  With this
normalization, the edge-ratio bound gives
\(1-2S_i(t,x)\in[C_\tau^{-1},C_\tau]\) for \(t\le\To\).

Let \(L\ge8\), and set
\[
  \alpha=\frac12\log L+1.
\]
For a perturbation start time \(\theta\in[\To-1,\To)\), define the
remaining gap
\[
  R_\theta=[L-g_{T-\theta}(V_\theta)]_+.
\]
The perturbation amplitude is the \(\mathcal F_\theta\)-measurable
random variable
\[
  \bdel=\frac{\alpha\,\1_{\{R_\theta\ge \alpha\}}}{R_\theta+1}.
\]
The perturbed process \(W\) starts from \(W_\theta=V_\theta\), is driven by
the same Poisson clocks as \(V\), and has coordinate perturbation
\(\delta_i(t)=\delta_i(t,V_{t-})\), where
\[
  \delta_i(t,x)
  =
  \bdel\left[
  \1_{\{S_i(t,x)>0\}}
  +
  \frac{1-2S_i(t,x)}{1-2\bdel S_i(t,x)}
  \1_{\{S_i(t,x)\le0\}}
  \right].
\]
The perturbation is stopped at
\[
  \StopT
  =
  \inf\set{t\in[\theta,\To] \colon
  \max\{g_{T-t}(V_t)-\alpha,\ g_{T-t}(W_t)\}\ge L}
  \wedge \To.
\]
The process \(t\mapsto\1_{\{t\le\StopT\}}\) is left-continuous and
adapted, hence predictable.
All predictable statements are with respect to the natural filtration
\[
  \mathcal F_t=\sigma(V_s \colon s\le t)\vee\sigma(W_s\colon \theta\le s\le t),
  \qquad t\in[\theta,\To],
\]
completed in the usual way. Under conditioning on \(V_T=\zeta\), we use
the same raw filtration on \([\theta,\To]\), completed under the
conditioned law.
Conditioned on \(\mathcal F_\theta\), the joint process \((V_t,W_t)\) is a
finite-state pure-jump process with predictable generator
\[
  \bar{\cL}_t^\delta h(x,y)
  =
  \bar{\cL}_t^0h(x,y)+\1_{\{t\le\StopT\}}\cB_t h(x,y),
\]
where
\[
  \Delta_i^yh(x,y)=h(x,\flip_i y)-h(x,y),\qquad
  \Delta_i^{xy}h(x,y)=h(\flip_i x,\flip_i y)-h(x,y),
\]
and, writing \(S_i=S_i(t,x)\), \(\delta_i=\delta_i(t,x)\),
\begin{align}
  \label{eq:joint-generator}
  \bar{\cL}_t^0h(x,y)
  &=
  \frac12\sum_{i=1}^n(1-2S_i)\Delta_i^{xy}h(x,y), \notag\\
  \cB_t h(x,y)
  &=
  \sum_{i=1}^n \1_{\{S_i>0\}}\delta_iS_i\Delta_i^yh(x,y)
  +
  \sum_{i=1}^n \1_{\{S_i\le0\}}\delta_iS_i
  \Delta_i^yh(\flip_i x,y).
\end{align}
The second line is a signed perturbation of the synchronized generator,
not a generator by itself; the full operator \(\bar{\cL}_t^\delta\) is the
predictable generator of the coupled process.
Throughout the sequel, \(T\) denotes the fixed terminal horizon in the
reverse construction, while \(\StopT\) denotes this stopping time.  The
perturbation part of the generator is always multiplied by
\(\1_{\{t\le\StopT\}}\), so integrals of perturbative terms may be written
over \([\theta,\StopT]\) or over \([\theta,\To]\) with this indicator.

\begin{lemma}[Joint-filtration martingale problem for the perturbed coupling]\label{lem:joint-filtration}
Consider the perturbed reverse-heat coupling constructed in
\cite[Section~3.1]{Chen2026}, written in the notation above; the
well-posedness of the coupled process is Chen's \cite[Lemma~6]{Chen2026},
recalled in \cref{app:chen-lemma6}.
For every strictly positive normalized \(f\), every \(L\ge8\), every
\(T>\tau+1\), and every \(\theta\in[\To-1,\To)\), the coupled process is
well-defined on
\([\theta,\To]\), has predictable generator \eqref{eq:joint-generator},
and makes the \(V\)-coordinate a time-inhomogeneous Markov chain with
generator \(\widetilde{\cL}_t\) with respect to the joint filtration
\((\mathcal F_t)\).  Equivalently, for every bounded \(h=h(x)\),
\[
  h(V_t)-h(V_\theta)
  -\int_\theta^t\widetilde{\cL}_s h(V_s)\,ds
\]
is an \((\mathcal F_t)\)-martingale.  In particular, for every bounded
test function depending only on \(x\), the perturbation operator
\(\cB_t\) vanishes.  Moreover, for every \(t\in[\theta,\To]\), if
\(P^V_{t,T}\) denotes the transition operator of the original reverse-heat
\(V\)-coordinate from \(t\) to the fixed terminal horizon \(T\), then for
every bounded terminal test \(\Phi\),
\[
  \E[\Phi(V_T)\mid\mathcal F_t]=P^V_{t,T}\Phi(V_t).
\]
\end{lemma}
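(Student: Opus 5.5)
The plan is to construct the coupled process explicitly from Poisson clocks and read off both the predictable generator and the marginal Markov property from that construction.

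\emph{Construction.} Work on a probability space carrying, for each coordinate \(i\), a Poisson random measure \(N_i\) on \([0,T]\times[0,\infty)\) with Lebesgue intensity. Let \(V\) flip coordinate \(i\) at an atom \((s,u)\) of \(N_i\) whenever \(u\le \tfrac12-S_i(s,V_{s-})\). This is the standard thinning construction of the reverse-heat chain with rates \(\tfrac12-S_i\), which are bounded on \([0,\To]\) by \cref{lem:edge-ratio}.

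On \([\theta,\To]\), let \(W\) use the same atoms. For \(s\le\StopT\), the acceptance thresholds for \(W\) are chosen so that the joint jump rates are exactly those in \eqref{eq:joint-generator}. There are three kinds of jumps.
\begin{itemize}[leftmargin=*]
  \item Synchronized flips of both \(V\) and \(W\), at rate \(\tfrac12-S_i\) minus the correction.
  \item Solo flips of \(W\): at rate \(\delta_iS_i\) when \(S_i>0\).
  \item Solo flips of \(V\), which occur when \(S_i\le0\). The term \(\Delta_i^yh(\flip_i x,y)\) rewrites as \(\Delta_i^{xy}h-\Delta_i^x h\), so the synchronized rate is lowered and a \(V\)-only flip is added.
\end{itemize}
One checks that all resulting rates are nonnegative. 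This is exactly where the form of \(\delta_i\) with the factor \(\tfrac{1-2S_i}{1-2\bdel S_i}\) and \(\bdel\le1\) enters. This well-posedness check is Chen's \cite[Lemma~6]{Chen2026} (\cref{app:chen-lemma6}), and I would simply invoke it. After \(\StopT\), set \(\delta\equiv0\).

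\emph{Generator and martingale problem.} Since \(\bdel\) is \(\mathcal F_\theta\)-measurable, \(\1_{\{s\le\StopT\}}\) is predictable, and all rates are bounded, compensating the Poisson random measures gives the following. For every bounded \(h(x,y)\),
\[
h(V_t,W_t)-h(V_\theta,W_\theta)-\int_\theta^t\bar{\cL}_s^\delta h(V_s,W_s)\,ds
\]
is an \((\mathcal F_t)\)-martingale. The reason is that the compensated stochastic integrals of bounded predictable integrands against \(N_i-\mathrm{Leb}\) are martingales in the joint filtration; finiteness of the state space gives boundedness.

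Now take \(h=h(x)\). Then \(\Delta_i^yh=0\), and also \(\Delta_i^yh(\flip_i x,y)=0\), so \(\cB_s h=0\). Moreover \(\bar{\cL}^0_s h(x,y)=\tfrac12\sum_i(1-2S_i)(h(\flip_i x)-h(x))=\widetilde{\cL}_s h(x)\). This yields the stated martingale property. The main point requiring care is that the total \(V\)-flip rate (synchronized plus \(V\)-only) equals \(\tfrac12-S_i\) regardless of \(y\), and the generator identity encodes precisely this.

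\emph{Markov property with respect to the joint filtration.} Let \(\psi(s,x)=P^V_{s,T}\Phi(x)\). It solves the backward equation \(\partial_s\psi+\widetilde{\cL}_s\psi=0\) and is \(C^1\) in \(s\), because the state space is finite and the rates are smooth in \(s\). Apply the time-dependent version of the martingale statement, obtained by the same compensator argument with \(h(s,x)\). On \([\theta,\To]\) this shows that \(\psi(s,V_s)\) is an \((\mathcal F_s)\)-martingale.

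Beyond \(\To\) the process \(W\) is no longer defined. I would extend the filtration by \(\mathcal F_s\vee\sigma(V_r\colon r\le s)\) and continue running \(V\) with the same clocks on \([\To,T]\). The same argument then makes \(\psi(s,V_s)\) a martingale up to \(T\), since the clocks after \(\To\) are independent of \(\mathcal F_{\To}\). Near \(s=T\) the rates may blow up, so I would justify the limit by letting \(s\uparrow T\): the chain \(V\) is a càdlàg process on a finite space with a.s. finitely many jumps, and \(\psi\) is bounded by \(\|\Phi\|_\infty\). Hence
\[
\E[\Phi(V_T)\mid\mathcal F_t]=\psi(t,V_t)=P^V_{t,T}\Phi(V_t).
\]

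I expect the main obstacle to be this endpoint behaviour at \(T\) together with the enlargement of the filtration. Everything else is bookkeeping once Chen's well-posedness lemma is granted.
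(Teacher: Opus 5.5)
Your proposal is correct and follows essentially the paper's route. The joint martingale problem comes from the predictable generator, $\cB_t h=0$ holds for $x$-only tests, $P^V_{t,T}\Phi$ solves the backward equation on $[\theta,\To]$, and the post-$\To$ clocks are independent; the only real difference is that you derive the generator from an explicit Poisson thinning instead of quoting Chen's Eq.~(19)--(20).

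The endpoint obstacle you flag does not actually arise. For strictly positive $f$ the rates $\frac12P_{T-s}f(\flip_i x)/P_{T-s}f(x)\le\frac12\max f/\min f$ stay bounded up to $T$. The paper sidesteps the issue anyway: it applies the Markov property at $\To$ and then the tower property, rather than running the martingale up to $T$.
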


\begin{proof}
This is the predictable-generator formulation of Chen's perturbed
reverse-heat coupling in \cite[Section~3.1]{Chen2026}, translated to the
present notation.  Chen's \cite[Lemma~5]{Chen2026}, recalled in
\cref{app:chen-lemma5}, gives the edge-ratio bounds used to keep the reverse
jump rates controlled, and Chen's \cite[Lemma~6]{Chen2026}, recalled in
\cref{app:chen-lemma6}, gives existence and uniqueness of the coupled SDE and
the bound on the perturbation size.  The stopped predictable joint generator
is Chen's generator representation \cite[Eq.~(19)--(20)]{Chen2026}, recalled
in \cref{app:chen-generator}:
under the notational translation displayed below, it is exactly
\eqref{eq:joint-generator}.

For tests \(h=h(x)\) depending only on the \(V\)-coordinate,
\(\Delta_i^yh(x,y)=0\) and
\(\Delta_i^yh(\flip_i x,y)=0\), so \(\cB_t h=0\).  Moreover
\(\Delta_i^{xy}h(x,y)=h(\flip_i x)-h(x)\), and therefore the
\(V\)-coordinate keeps the reverse-heat martingale problem with generator
\(\widetilde{\cL}_t\) with respect to the joint filtration.

It remains to identify the terminal transition.  Let \(\Phi\) be a
bounded terminal test and put
\[
  u(t,x)=P^V_{t,T}\Phi(x).
\]
On the finite state space, \(u\) is the unique bounded solution of
\[
  (\partial_t+\widetilde{\cL}_t)u=0,\qquad u(T,x)=\Phi(x).
\]
Applying the joint-filtration martingale problem to the time-dependent
test \(u(t,\cdot)\) on the interval \([\theta,\To]\) gives, for
\(t\le\To\),
\[
  \E\!\left[P^V_{\To,T}\Phi(V_\To)\mid\mathcal F_t\right]
  =
  P^V_{t,T}\Phi(V_t).
\]
After \(\To\), the \(V\)-coordinate is continued as the original
reverse-heat chain.  Conditional on \(V_\To\), its future driving clocks are
independent of \(\mathcal F_\To\), hence
\[
  \E[\Phi(V_T)\mid\mathcal F_\To]
  =
  P^V_{\To,T}\Phi(V_\To).
\]
The tower property therefore gives
\[
  \E[\Phi(V_T)\mid\mathcal F_t]
  =
  P^V_{t,T}\Phi(V_t),\qquad t\in[\theta,\To].
\]
Taking \(\Phi=\1_{\{\zeta\}}\) gives
\[
  H_t^\zeta(V_t)=\Pp(V_T=\zeta\mid\mathcal F_t),
\]
the form used in \cref{lem:terminal-conditioning}.
\end{proof}

We quote the needed reverse-heat estimates from \cite{Chen2026} in the
notation used below.  The translation is:
\[
\begin{array}{c|c}
\text{Chen notation} & \text{notation in this note}\\
\hline
\log \eta & L\\
\alpha=\frac12\log\log\eta+1 & \alpha=\frac12\log L+1\\
R_\theta=[\log\eta-\log P_{T-\theta}f(V_\theta)]_+
  & R_\theta=[L-g_{T-\theta}(V_\theta)]_+\\
S_i(\rho_t\mathbf V_t) & S_i(t,V_t)\\
\text{Chen's stopping time } \boldsymbol{\mathfrak T} & \sigma_\theta\\
\kappa=(1+e^{-\tau})/(1-e^{-\tau}) & \text{absorbed into }C_\tau
\end{array}
\]
The symbol \(\boldsymbol{\mathfrak T}\) in the table denotes Chen's
stopping time, not the terminal horizon \(T\) used here.  The score row
means explicitly
\[
  S_i(t,V_t)=
  e^{-(T-t)}
  \frac{V_t^{(i)}\,\partial_i f(e^{-(T-t)}V_t)}
      {f(e^{-(T-t)}V_t)}.
\]

The next two estimates are quoted directly from Chen
\cite[Lemmas~3 and~8]{Chen2026} after the notational translation displayed
above.

\begin{lemma}[Approximate monotonicity, Chen {\cite[Lemma~3]{Chen2026}}]\label{lem:monotone}
For the coupling above,
\[
  \Pp\{g_\tau(V_\To)>L+1\}
  \ge
  \Pp\{g_\tau(W_\To)>L\}
  -\cA_{T-\theta}((L-\alpha,L+\alpha])
  -\frac{3}{\sqrt L}.
\]
\end{lemma}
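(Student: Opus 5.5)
The plan is to compare the events \(\{g_\tau(V_\To)>L+1\}\) and \(\{g_\tau(W_\To)>L\}\) on the coupling, using the stopping time \(\StopT\). Since \(g_{T-t}\) at \(t=\To\) is \(g_\tau\), the stopping rule forces one of three outcomes on \(\{g_\tau(W_\To)>L\}\). Either the \(V\)-trigger fired, \(g_{T-t}(V_t)-\alpha\ge L\) for some \(t\le\To\); or the \(W\)-trigger fired first at a time \(\StopT<\To\), after which \(W\) evolves without perturbation; or neither fired and \(\StopT=\To\). We decompose \(\{g_\tau(W_\To)>L\}\) according to which of these happened, and in each case bound the probability that \(g_\tau(V_\To)\le L+1\).

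First, we exploit the structure of the synchronized generator \(\bar\cL^0_t\). After \(\StopT\), both coordinates flip together, so the discrepancy set \(\{i: V^{(i)}_t\ne W^{(i)}_t\}\) is frozen. Before \(\StopT\), the perturbation in \(\cB_t\) only creates disagreements in a monotone direction. When \(S_i>0\), \(W\) refrains from a flip that lowers \(g\); when \(S_i\le0\), \(W\) takes an extra flip that raises \(g\). This is the standard ``\(W\) stochastically dominates \(V\) in the direction of increasing \(P_{T-t}f\)'' mechanism. Combined with the edge-ratio bound \cref{lem:edge-ratio}, it says that whenever \(g(W)\) climbs above \(g(V)\), the excess is carried by coordinates whose score favours \(W\).

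The key quantitative step is to show that, except on an event of probability \(O(L^{-1/2})\), the gap satisfies \(g_{T-t}(W_t)-g_{T-t}(V_t)\le\alpha\) up to \(\StopT\). For this I would take the process \(M_t=g_{T-t}(W_t)-g_{T-t}(V_t)\), stopped at \(\StopT\), and compute its drift and its quadratic variation from \eqref{eq:joint-generator}. The drift is of order \(\bdel\sum_i S_i^2\), and \(\int\sum_i S_i^2\,dt\) is controlled by the entropy dissipation of the reverse heat flow, which is at most \(L\) on the relevant range. Since \(\bdel\le\alpha/(R_\theta+1)\), this makes the total drift of order \(\alpha\) times the fraction of the remaining gap used up. A Doob/Freedman maximal inequality then gives a deviation probability of about \(e^{-\alpha}\lesssim L^{-1/2}\); this is where \(\alpha=\tfrac12\log L+1\) and the constant \(3\) come from.

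On the good event, the three cases close as follows. If the \(V\)-trigger fired, then \(g(V)\) reached \(L+\alpha\). The only way to have \(g_\tau(V_\To)\le L+1\) is then for \(g\) to pass back through \((L-\alpha,L+\alpha]\), and optional stopping for the supermartingale \(1/u_{T-t}(V_t)\) makes this cost \(O(e^{-\alpha})\). If instead the \(W\)-trigger fired or \(\StopT=\To\), then on the good event \(g(V)\ge g(W)-\alpha\ge L-\alpha\). So either \(g_\tau(V_\To)>L+1\), or \(g_{T-\theta}\)-level information places \(V\) in the window \((L-\alpha,L+\alpha]\), and this is charged to \(\cA_{T-\theta}((L-\alpha,L+\alpha])\) using \(V_\theta\sim\nu_{T-\theta}\). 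The step I expect to be the main obstacle is the martingale control of \(M_t\): the drift must be bounded uniformly over the random amplitude \(\bdel\), and the entropy estimate must be matched to the \(\mathcal F_\theta\)-measurable gap \(R_\theta\). I would first try Chen's original computation from \cite[Lemma~3]{Chen2026} verbatim in the present notation.
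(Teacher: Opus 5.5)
The paper gives no proof of this lemma: it imports Chen's Lemma~3 verbatim after the notational translation, so your closing sentence is what the paper actually does. The argument you sketch before that has a genuine gap, and it starts with the sign of the perturbation. Read \eqref{eq:joint-generator} again. When $S_i>0$, a flip of coordinate $i$ multiplies $P_{T-t}f$ by $1-2S_i<1$, and $\cB_t$ adds $W$-only flips at rate $\bdel S_i\ge0$; so $W$ takes extra $g$-lowering flips. When $S_i\le0$, adding $\cB_t$ to $\bar{\cL}^0_t$ leaves joint flips at rate $(\frac12-S_i)/(1-2\bdel S_i)$ and $V$-only flips at rate $-\delta_iS_i\ge0$; so $V$ takes $g$-raising flips that $W$ skips. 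Thus $W$ is dragged \emph{down} relative to $V$ (exactly so while $W_t=V_t$), the opposite of the domination you describe. This downward drag is what makes the lemma plausible: it says that $\{g_\tau(W_\To)>L\}$ essentially forces $\{g_\tau(V_\To)>L+1\}$.

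Because of the sign error, your key step bounds the wrong quantity. The estimate $g(W)-g(V)\le\alpha$ gives at best $g_\tau(V_\To)\ge L-\alpha$, not $>L+1$. It is also only claimed up to $\StopT$, while $g(W_t)-g(V_t)$ keeps moving on $[\StopT,\To]$ even with a frozen disagreement set. What is needed is a lag in the other direction: on $\{R_\theta\ge\alpha\}$, $W$ should reach level $L$ before $V$ reaches $L+\alpha$ only with probability $O(L^{-1/2})$. Proving this means turning the accumulated downward drift, of order $\bdel\int\sum_iS_i^2\,dt\approx\bdel R_\theta\approx\alpha$, into a large-deviation bound. One route is optional stopping of a positive supermartingale built jointly from $u_{T-t}(W_t)$ and $u_{T-t}(V_t)$. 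Any such argument must cope with the fact that after the paths separate, $W$'s rates are driven by $S_i(t,V_{t-})$, not by $W$'s own score. So your drift formula for $g(W_t)-g(V_t)$ holds only while $W_t=V_t$. Nor does Freedman's inequality give $e^{-\alpha}$ here, since the score energy is controlled only in conditional expectation (\cref{lem:score-energy}), not pathwise.

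The charging step is a second, independent gap. The term $\cA_{T-\theta}((L-\alpha,L+\alpha])=\Pp\{g_{T-\theta}(V_\theta)\in(L-\alpha,L+\alpha]\}$ concerns the \emph{starting} point $V_\theta$. Knowing $g_\tau(V_\To)\in[L-\alpha,L+1]$ at the terminal time says nothing about it. The event you discard is only controlled by $\cA_\tau((L-\alpha,L+1])\ge\cA_\tau((L,L+1])$. That is a fixed-time quantity which the $\theta$-average cannot reach, so with such an error term the bootstrap in the proof of \cref{prop:AC} would be circular.

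The $\cA_{T-\theta}$ term has a different origin, namely the inactive event $\{R_\theta<\alpha\}$. There $\bdel=0$, so $W\equiv V$ on $[\theta,\To]$ and the perturbation buys nothing. One discards the starts with $g_{T-\theta}(V_\theta)\in(L-\alpha,L+\alpha]$, which is exactly that term. Paths along which $V$ reaches level $L+\alpha$ (in particular, starts above $L+\alpha$) are handled by Doob's maximal inequality for the nonnegative martingale $1/u_{T-t}(V_t)$, at cost $e^{-(\alpha-1)}=L^{-1/2}$. That ingredient is the correct part of your sketch. But your case analysis never separates $\{R_\theta<\alpha\}$ from $\{R_\theta\ge\alpha\}$, and that split is precisely where the $\cA_{T-\theta}$ term comes from.
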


\begin{lemma}[Conditional score energy, Chen {\cite[Lemma~8]{Chen2026}}]\label{lem:score-energy}
Conditionally on \(\mathcal F_\theta\),
\[
  \E\left[
  \int_\theta^{\StopT}
  \sum_{i=1}^n S_i(t,V_{t-})^2\,dt
  \,\middle|\,\mathcal F_\theta
  \right]
  \le C_\tau(R_\theta+\alpha+1).
\]
In particular, for every \(E\in\mathcal F_\theta\),
\[
  \E\left[\1_E\bdel^2
  \int_\theta^{\StopT}\sum_{i=1}^n S_i(t,V_{t-})^2\,dt\right]
  \le C_\tau\,
  \E\left[\1_E\bdel^2(R_\theta+\alpha+1)\right].
\]
\end{lemma}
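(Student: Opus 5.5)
The plan is to run a Dynkin/Itô argument for the log-density \(g_{T-t}(V_t)\) along the reverse-heat chain. By \cref{lem:joint-filtration}, \(V\) obeys the reverse-heat martingale problem in the joint filtration, so the perturbation never enters. The key observation is that the drift of \(g_{T-t}(V_t)\) dominates \(\sum_i S_i^2\) up to a constant depending on \(\tau\). Integrating this drift up to \(\StopT\) then reduces the energy bound to the net increase of \(g\) over \([\theta,\StopT]\), and that increase is at most \(R_\theta+\alpha+C_\tau\) by the definition of the stopping rule.

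\textbf{Step 1 (drift computation).} Write \(u(t,x)=P_{T-t}f(x)\) and \(r_i=1-2S_i(t,x)=u(t,\flip_i x)/u(t,x)\). Time reversal gives \(\partial_t u=-\cL u\), and hence
\[
\partial_t\log u=-\tfrac12\sum_i(r_i-1)=\sum_i S_i .
\]
The reverse jump rate in coordinate \(i\) is \(\tfrac12 r_i\), and such a jump changes \(\log u\) by \(\log r_i\). The compensator density of \(g_{T-t}(V_t)\) is therefore
\[
D_t=\sum_i\Big(S_i+\tfrac12 r_i\log r_i\Big)=\tfrac12\sum_i\big(1-r_i+r_i\log r_i\big).
\]
For \(t\le\To\) we have \(T-t\ge\tau\), so \(r_i\in[C_\tau^{-1},C_\tau]\) by \cref{lem:edge-ratio}. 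On this interval \(\phi(r)=1-r+r\log r\ge c_\tau(r-1)^2\), and consequently
\[
D_t\ge c_\tau\sum_i S_i(t,V_{t-})^2 .
\]
Equivalently, \(1/u(t,V_t)\) is a martingale (a Doob transform), and \(D_t\) is the convexity defect of \(-\log\) of that martingale.

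\textbf{Step 2 (optional stopping).} The state space is finite, all rates are bounded, and \(\StopT\le\To\) is a bounded stopping time. Optional stopping, applied conditionally on \(\mathcal F_\theta\), therefore gives
\[
c_\tau\,\E\Big[\int_\theta^{\StopT}\sum_i S_i^2\,dt\,\Big|\,\mathcal F_\theta\Big]\le \E\big[g_{T-\StopT}(V_{\StopT})\mid\mathcal F_\theta\big]-g_{T-\theta}(V_\theta).
\]

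\textbf{Step 3 (bounding the endpoint gap).} If \(g_{T-\theta}(V_\theta)\ge L\), then \(g_{T-\theta}(W_\theta)\ge L\) because \(W_\theta=V_\theta\). In that case \(\StopT=\theta\) and the integral vanishes. Otherwise \(g_{T-\theta}(V_\theta)=L-R_\theta\). Strictly before \(\StopT\) we have \(g_{T-t}(V_t)<L+\alpha\), and \(t\mapsto g_{T-t}(x)\) is continuous. At \(\StopT\) itself there can be one overshoot from a jump, and by \cref{lem:edge-ratio} this overshoot is at most \(\log\frac{1+e^{-\tau}}{1-e^{-\tau}}\le C_\tau\). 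Hence
\[
g_{T-\StopT}(V_{\StopT})-g_{T-\theta}(V_\theta)\le R_\theta+\alpha+C_\tau,
\]
which yields the first display of the lemma.

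\textbf{Step 4 (the weighted form).} The second display follows by multiplying the conditional bound by the nonnegative \(\mathcal F_\theta\)-measurable weight \(\1_E\bdel^2\) and taking expectations.

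The main technical point is Step 1: one must check that the compensator identity holds in the joint filtration, which is exactly what \cref{lem:joint-filtration} guarantees. Some care is also needed at the jump at \(\StopT\) in Step 3.
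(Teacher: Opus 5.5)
Your proof is correct. It is also more than the paper provides, because the paper does not prove the first display. The paper imports that display from Chen \cite[Lemma~8]{Chen2026} and adds only a remark. The remark says that since \(W_\theta=V_\theta\) and the perturbation is switched on after \(\theta\), \(\mathcal F_\theta\) is the initial data of the stopped coupling, so Chen's bound may be read conditionally on \(\mathcal F_\theta\). The second display is then obtained exactly as in your Step~4.

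Your Steps~1--3 give a self-contained derivation instead:
\begin{itemize}
\item The compensator density of \(g_{T-t}(V_t)\) is \(\frac12\sum_i(1-r_i+r_i\log r_i)\). By \cref{lem:edge-ratio}, this is at least \(c_\tau\sum_iS_i^2\) for \(t\le\To\).
\item Optional stopping at \(\StopT\) reduces the energy to the increment of \(g\).
\item The \(V\)-trigger \(g_{T-t}(V_t)-\alpha\ge L\) in the stopping rule caps that increment at \(R_\theta+\alpha+\log\frac{1+e^{-\tau}}{1-e^{-\tau}}\).
\item On \(\{g_{T-\theta}(V_\theta)\ge L\}\) one has \(\StopT=\theta\), via the \(W\)-trigger and \(W_\theta=V_\theta\).
\end{itemize}

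What your route buys is that the conditional form is proved rather than asserted. You run optional stopping conditionally on \(\mathcal F_\theta\) in the joint filtration. You use \cref{lem:joint-filtration} to guarantee that the signed perturbation \(\cB_t\) does not alter the compensator of a function of \(V\) alone. So the bound holds pathwise in \(\mathcal F_\theta\) with an explicit constant. The paper's route is shorter, but it rests on the reader accepting that Chen's estimate transfers to the \(\mathcal F_\theta\)-conditional statement in the enlarged filtration. That transfer is exactly the point your argument makes rigorous.

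Two small details should be stated explicitly.
\begin{itemize}
\item \cref{lem:joint-filtration} is formulated for time-independent tests \(h(x)\). You need its standard finite-state extension to the time-dependent bounded test \(g_{T-t}(x)\). The paper itself uses that extension in the proof of that lemma.
\item The overshoot bound at \(\StopT\) uses that the coordinate clocks almost surely never ring simultaneously. Together with right-continuity, this makes the jump at \(\StopT\) a single flip from a left limit with \(g\le L+\alpha\).
\end{itemize}
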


\begin{proof}[Comment on the conditional form]
We use Chen's estimate in the \(\mathcal F_\theta\)-conditional form
displayed above.  At the starting time \(\theta\), the coupling is
initialized by \(W_\theta=V_\theta\), and the perturbation is switched on
only after \(\theta\); hence \(\mathcal F_\theta\) is precisely the initial
information for Chen's stopped coupling.  Since both \(E\) and \(\bdel\)
are \(\mathcal F_\theta\)-measurable, multiplying the conditional estimate
by \(\1_E\bdel^2\) and taking expectations gives the displayed
consequence.
\end{proof}

\section{Localized terminal testing discrepancy}\label{sec:localized-discrepancy}

The main estimate in this section localizes the terminal
testing-discrepancy calculation.  The argument keeps an
\(\mathcal F_\theta\)-measurable factor throughout the Duhamel identity,
the bridge expansion, and the Doob-transform energy estimate.

\begin{lemma}[Boolean heat bridge algebra]\label{lem:bridge-algebra}
Fix \(t\in[\theta,\To]\) and condition on \(V_T=\zeta\).  Put
\[
  \gamma_t=e^{-(\To-t)},\qquad
  \beta=e^{-\tau},\qquad
  \rho_t=\gamma_t\beta=e^{-(T-t)}.
\]
Define
\[
  \lambda_{t,i}^{\zeta}(x)
  =
  \frac{1-\rho_t x_i\zeta_i}{1+\rho_t x_i\zeta_i}.
\]
Let
\[
  a_t=\frac{\gamma_t(1-\beta^2)}{1-\rho_t^2}
  =\frac{\sinh(T-\To)}{\sinh(T-t)},\qquad
  b_t=\frac{\beta(1-\gamma_t^2)}{1-\rho_t^2}
  =\frac{\sinh(\To-t)}{\sinh(T-t)},\qquad
  \omega_i=x_iy_i\zeta_i,
\]
and
\[
  m_t^{[i]}(x,y,\zeta)=a_ty_i+b_t\omega_i.
\]
Equivalently,
\[
  m_t^{[i]}(x,y,\zeta)
  =
  y_i\frac{\gamma_t+\beta x_i\zeta_i}{1+\rho_t x_i\zeta_i}.
\]
For a multilinear extension \(\phi\) of a \(\{0,1\}\)-valued function, set
\[
  q_t^\zeta(x,y)=\phi(m_t(x,y,\zeta)).
\]
Here \(\Delta_i^y h(x,y)=h(x,\flip_i y)-h(x,y)\) and
\(\Delta_i^{xy}h(x,y)=h(\flip_i x,\flip_i y)-h(x,y)\).
Then
\begin{align}
  \label{eq:bridge-delta-y}
  \Delta_i^yq_t^\zeta(x,y)
  &=-2(a_ty_i+b_t\omega_i)\partial_i\phi(m_t(x,y,\zeta)),\\
  \label{eq:bridge-delta-y-flipx}
  \Delta_i^yq_t^\zeta(\flip_i x,y)
  &=-2(a_ty_i-b_t\omega_i)\partial_i\phi(m_t(x,y,\zeta)),\\
  \label{eq:bridge-delta-xy}
  \Delta_i^{xy}q_t^\zeta(x,y)
  &=-2a_ty_i\partial_i\phi(m_t(x,y,\zeta)).
\end{align}
Moreover,
\begin{align}
  \label{eq:bridge-b-control}
  \lambda_{t,i}^{\zeta}(x)b_t^2
  \le
  \frac{e^{-2\tau}}{1-e^{-2\tau}}
  \bigl(1-m_t^{[i]}(x,y,\zeta)^2\bigr),
\end{align}
and, under the Doob-transformed unperturbed generator
\[
  \cL_t^{0,\zeta}h(x,y)
  =
  \frac12\sum_{i=1}^n
  \lambda_{t,i}^{\zeta}(x)\Delta_i^{xy}h(x,y),
\]
one has
\begin{align}
  \label{eq:bridge-harmonic}
  (\partial_t+\cL_t^{0,\zeta})q_t^\zeta&=0,\\
  \label{eq:bridge-square}
  (\partial_t+\cL_t^{0,\zeta})(q_t^\zeta)^2
  &=
  \frac12\sum_i\lambda_{t,i}^{\zeta}
  \bigl(\Delta_i^{xy}q_t^\zeta\bigr)^2 \notag\\
  &=
  2a_t^2\sum_i\lambda_{t,i}^{\zeta}
  |\partial_i\phi(m_t)|^2.
\end{align}
\end{lemma}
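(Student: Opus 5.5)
The plan is to verify the lemma by direct computation. Everything rests on two observations. First, \(\phi\) is multilinear, so \(\partial_i\phi(m)\) does not depend on the \(i\)-th coordinate \(m^{[i]}\). Consequently, changing only the \(i\)-th coordinate of \(m\) from \(u\) to \(u'\) changes \(\phi(m)\) by exactly \((u'-u)\,\partial_i\phi(m)\). Second, the \(i\)-th coordinate \(m_t^{[i]}\) depends only on \((x_i,y_i,\zeta_i)\), and only through \(y_i\) and \(\omega_i=x_iy_i\zeta_i\).

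\textbf{Step 1: the three difference identities.} I will track how \(y_i\) and \(\omega_i\) transform under each flip.
\begin{itemize}
\item Flipping \(y_i\) sends \((y_i,\omega_i)\) to \((-y_i,-\omega_i)\). So \(m^{[i]}\) goes from \(a_ty_i+b_t\omega_i\) to \(-(a_ty_i+b_t\omega_i)\), which gives \eqref{eq:bridge-delta-y}.
\item At the point \((\flip_i x,y)\) one has \(\omega_i\mapsto-\omega_i\), so the \(i\)-th coordinate there is \(a_ty_i-b_t\omega_i\). After additionally flipping \(y_i\), \(\omega_i\) is restored and the coordinate becomes \(-a_ty_i+b_t\omega_i\). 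The difference is \(-2(a_ty_i-b_t\omega_i)\), which gives \eqref{eq:bridge-delta-y-flipx}; the other coordinates are untouched.
\item Flipping both \(x_i\) and \(y_i\) leaves \(\omega_i\) fixed and sends \(y_i\mapsto -y_i\). The coordinate goes from \(a_ty_i+b_t\omega_i\) to \(-a_ty_i+b_t\omega_i\), which gives \eqref{eq:bridge-delta-xy}.
\end{itemize}
The equivalence of the two expressions for \(m_t^{[i]}\) is a one-line check. Write \(s=x_i\zeta_i\in\{\pm1\}\), so that \(\omega_i=sy_i\). Both the numerator identity and \(1-\rho_t^2=(1-\rho_ts)(1+\rho_ts)\) reduce to
\[
a_t+b_ts=\frac{\gamma_t+\beta s}{1+\rho_ts},
\]
which I will verify separately for \(s=\pm1\).

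\textbf{Step 2: the bound \eqref{eq:bridge-b-control}.} Using the second form of \(m^{[i]}\) and \(y_i^2=1\),
\[
1-(m^{[i]})^2=\frac{(1+\gamma_t\beta s)^2-(\gamma_t+\beta s)^2}{(1+\rho_ts)^2}=\frac{(1-\gamma_t^2)(1-\beta^2)}{(1+\rho_ts)^2}.
\]
Dividing \(\lambda_{t,i}^\zeta b_t^2=\frac{1-\rho_ts}{1+\rho_ts}\cdot\frac{\beta^2(1-\gamma_t^2)^2}{(1-\rho_t^2)^2}\) by this quantity gives the ratio
\[
\frac{\beta^2(1-\gamma_t^2)}{(1-\rho_t^2)(1-\beta^2)}.
\]
Since \(\rho_t\le\gamma_t\), we have \(1-\gamma_t^2\le1-\rho_t^2\), so the ratio is at most \(\beta^2/(1-\beta^2)=e^{-2\tau}/(1-e^{-2\tau})\).

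\textbf{Step 3: harmonicity \eqref{eq:bridge-harmonic}.} This is the only step with real content. There are two routes.
\begin{itemize}
\item \emph{Conceptual route.} Under the bridge conditioned on \(V_T=\zeta\), synchronized jumps keep each \(x_iy_i\) fixed. The conditional law of \(V_{\To}\) given \(V_t=x\) is then a product measure with coordinate means \(x_i(\gamma_t+\beta s)/(1+\rho_ts)\). Hence \(q_t^\zeta(x,y)\) is the conditional expectation of \(\phi(Y_{\To})\), a Doob martingale, and so it is space-time harmonic for \(\cL^{0,\zeta}_t\).
\item \emph{Direct route (the one I prefer).} By Step 1 and multilinearity,
\[
\cL_t^{0,\zeta}q=-\sum_i\lambda_{t,i}^\zeta a_ty_i\,\partial_i\phi(m),
\qquad
\partial_tq=\sum_i\partial_tm^{[i]}\,\partial_i\phi(m).
\]
So it suffices to check the scalar ODE \(\partial_tm^{[i]}=\lambda_{t,i}^\zeta a_ty_i\), with \(s\) fixed. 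Using \(\partial_t\gamma_t=\gamma_t\) and \(\partial_t\rho_t=\rho_t\),
\[
\partial_t\frac{\gamma_t+\beta s}{1+\rho_ts}=\frac{\gamma_t(1+\rho_ts)-\rho_ts(\gamma_t+\beta s)}{(1+\rho_ts)^2}=\frac{\gamma_t(1-\beta^2)}{(1+\rho_ts)^2}.
\]
The right-hand side is \(\frac{1-\rho_ts}{1+\rho_ts}\cdot\frac{\gamma_t(1-\beta^2)}{1-\rho_t^2}\), which equals exactly \(\lambda_{t,i}^\zeta a_t\).
\end{itemize}
I expect the bookkeeping here, in particular confirming that the Doob-transformed rate is \(\frac12\lambda_{t,i}^\zeta\) and that it matches the ODE with the correct sign, to be the main thing to get right.

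\textbf{Step 4: the square identity \eqref{eq:bridge-square}.} I will use the carré-du-champ identity for a jump generator,
\[
\cL(q^2)=2q\,\cL q+\frac12\sum_i\lambda_{t,i}^\zeta(\Delta_i^{xy}q)^2,
\]
together with \(\partial_t(q^2)=2q\,\partial_tq\). By \eqref{eq:bridge-harmonic} the terms \(2q(\partial_t+\cL_t^{0,\zeta})q\) vanish, which leaves the first equality. Substituting \eqref{eq:bridge-delta-xy} and \(y_i^2=1\) then gives \(2a_t^2\sum_i\lambda_{t,i}^\zeta|\partial_i\phi(m_t)|^2\).
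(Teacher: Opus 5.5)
Your proposal is correct, and it reaches the lemma by a different route from the paper. The paper does no computation here. It imports the bridge formula and the three difference identities from Chen's Lemma~9, and the rates $\lambda_{t,i}^\zeta$ from Chen's Doob-transform Lemma~10. The harmonicity, the $b_t$-control and the square-energy identity it takes from the proof of Chen's Lemma~11, after translating notation.

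You instead observe that, once $\cL_t^{0,\zeta}$ is taken as defined in the statement, every claim is algebraic. You then verify each one from the explicit formula $m_t^{[i]}=y_i(\gamma_t+\beta s)/(1+\rho_t s)$ with $s=x_i\zeta_i$:
\begin{itemize}
\item multilinearity of $\phi$ gives the three difference identities;
\item the identity $1-(m_t^{[i]})^2=(1-\gamma_t^2)(1-\beta^2)/(1+\rho_t s)^2$ gives the $b_t$-control, even with the sharper $t$-dependent constant $c_t=\beta^2(1-\gamma_t^2)/((1-\rho_t^2)(1-\beta^2))\le e^{-2\tau}/(1-e^{-2\tau})$;
\item harmonicity reduces to the scalar ODE $\partial_t m_t^{[i]}=\lambda_{t,i}^\zeta a_t y_i$, which you verify correctly;
\item the square identity then follows from the carr\'e-du-champ formula together with $y_i^2=1$.
\end{itemize}
I checked each of these computations, and they are right.

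What your route buys is a self-contained proof that does not depend on the notational translation, for instance on the normalization $\rho_t=e^{-(T-t)}$. It also removes the worry you flag in Step~3. Inside this lemma the rate $\frac12\lambda_{t,i}^\zeta$ is part of the definition of $\cL_t^{0,\zeta}$, so no Doob-transform check is needed. That identification, $r_{t,i}^\zeta(1-2S_i)=\lambda_{t,i}^\zeta$, is the content of \cref{lem:terminal-conditioning}.

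What the paper's citation carries and your direct route does not is the probabilistic meaning of $q_t^\zeta$. It is the bridge conditional expectation of $\phi(W^0_{\To})$, and that is what later justifies $U_t=\sum_\zeta H_t^\zeta q_t^\zeta$. Your ``conceptual route'' is where that would be proved, but the lemma as stated does not need it.

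Two small completeness points:
\begin{itemize}
\item You never check the $\sinh$ forms of $a_t$ and $b_t$. Multiplying numerator and denominator by $e^{-(T-t)}$ does it in one line.
\item In Step~2 you divide by $1-(m_t^{[i]})^2$, which vanishes at $t=\To$. There $b_{\To}=0$ and both sides are zero, or you can state the result directly as the identity $\lambda_{t,i}^\zeta b_t^2=c_t\bigl(1-(m_t^{[i]})^2\bigr)$.
\end{itemize}
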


\begin{proof}
The bridge representation, the formula for \(m_t\), and the three finite
difference identities are Chen's Boolean heat bridge lemma
\cite[Lemma~9]{Chen2026}, recalled in \cref{app:chen-lemma9}, after
replacing Chen's \(T_o\) by \(\To\) and
writing \(\rho_t=e^{-(T-t)}\).  The expression for
\(\lambda_{t,i}^{\zeta}\) and the Doob-transformed unperturbed generator
are those in Chen's Doob \(h\)-transform lemma \cite[Lemma~10]{Chen2026},
recalled in \cref{app:chen-lemma10}.
The space-time harmonicity, the \(b_t\)-control, and the square-energy
identity are the corresponding bridge identities used in the proof of
Chen's weighted energy estimate \cite[Lemma~11]{Chen2026}, recalled in
\cref{app:chen-lemma11}; after the same
notation translation they give exactly
\eqref{eq:bridge-harmonic}--\eqref{eq:bridge-square}.
\end{proof}

The following consequence of \cref{lem:joint-filtration} is used repeatedly.  Even
though the joint coupling is path-dependent through
\(\1_{\{t\le\StopT\}}\), the \(V\)-coordinate has the same predictable
characteristics with respect to the enlarged filtration.  Consequently,
future terminal tests of \(V_T\) may be conditioned through the original
reverse-heat transition once \(V_t\) is fixed.

\begin{lemma}[Terminal conditioning under the perturbed coupling]\label{lem:terminal-conditioning}
Under \cref{lem:joint-filtration}, for \(\zeta\in\cube\) with
\(\Pp(V_T=\zeta)>0\), set
\(\Pp^\zeta=\Pp(\,\cdot\,\mid V_T=\zeta)\).  For
\(t\in[\theta,\To]\),
\[
  \Pp(V_T=\zeta\mid\mathcal F_t)
  =
  \Pp(V_T=\zeta\mid V_t)
  =
  H_t^\zeta(V_t),
\]
where
\[
  H_t^\zeta(x)
  =
  \frac{K_t^\zeta(x)f(\zeta)}{P_{T-t}f(x)},\qquad
  K_t^\zeta(x)=2^{-n}\prod_{j=1}^n(1+\rho_t x_j\zeta_j),
  \qquad \rho_t=e^{-(T-t)}.
\]
Let \(r_{t,i}^\zeta(x)=H_t^\zeta(\flip_i x)/H_t^\zeta(x)\).  Under
\(\Pp^\zeta\), the stopped perturbed process on \([\theta,\To]\) has
predictable generator
\[
  \cL_t^{0,\zeta}+\1_{\{t\le\StopT\}}\cB_t^\zeta,
\]
where
\[
  \cL_t^{0,\zeta}h(x,y)
  =
  \frac12\sum_i\lambda_{t,i}^{\zeta}(x)\Delta_i^{xy}h(x,y),
  \qquad
  \lambda_{t,i}^{\zeta}(x)=
  \frac{1-\rho_t x_i\zeta_i}{1+\rho_t x_i\zeta_i},
\]
and
\[
  \cB_t^\zeta h(x,y)
  =
  \sum_i\1_{\{S_i>0\}}\delta_iS_i\Delta_i^yh(x,y)
  +
  \sum_i\1_{\{S_i\le0\}}r_{t,i}^\zeta\delta_iS_i
  \Delta_i^yh(\flip_i x,y).
\]
Here \(S_i=S_i(t,x)\) and \(\delta_i=\delta_i(t,x)\).  Moreover, if
\(E\in\mathcal F_\theta\) and \(M_t^\zeta\) is a bounded martingale on
\([\theta,\To]\) under \(\Pp^\zeta\) with respect to the conditioned
filtration, then \(\1_E(M_t^\zeta-M_\theta^\zeta)\) is again a
martingale; the same statement holds under \(\Pp\).
As before, \(\cB_t^\zeta\) is a signed perturbation; only the full
operator \(\cL_t^{0,\zeta}+\1_{\{t\le\StopT\}}\cB_t^\zeta\) is the
conditioned predictable generator.
\end{lemma}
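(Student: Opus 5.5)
The proof has three parts: the conditional terminal probability, the Doob transform of the coupled generator, and the localization of martingales by \(\mathcal F_\theta\)-measurable events.

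\emph{Terminal probability.} Take \(\Phi=\1_{\{\zeta\}}\) in \cref{lem:joint-filtration}. This gives
\[
\Pp(V_T=\zeta\mid\mathcal F_t)=P^V_{t,T}\1_{\{\zeta\}}(V_t),
\]
a function of \(V_t\) alone. Conditioning further on \(\sigma(V_t)\subseteq\mathcal F_t\) then yields the middle equality.

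To identify \(H_t^\zeta\), use the fact that \(V\) is the time reversal of the forward heat chain started from \(f\,d\mu\), so \((V_t,V_T)\) has the law of \((X_{T-t},X_0)\) for the forward chain. The forward pair has joint mass
\[
2^{-n}f(\zeta)\,\Pp_\zeta(X_{T-t}=x).
\]
Since each coordinate is kept with correlation \(\rho_t=e^{-(T-t)}\), we have \(\Pp_\zeta(X_{T-t}=x)=K^\zeta_t(x)\). The marginal mass of \(V_t=x\) is \(2^{-n}P_{T-t}f(x)\). Dividing gives the stated formula for \(H_t^\zeta\).

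\emph{Conditioned generator.} Conditioning on \(V_T=\zeta\) is the Doob \(h\)-transform with the space-time harmonic function \(h(t,x,y)=H_t^\zeta(x)\). It is harmonic for the full joint generator because \(\cB_t\) annihilates functions of \(x\) alone (\cref{lem:joint-filtration}) and \(H^\zeta_t\) solves \((\partial_t+\widetilde\cL_t)H^\zeta_t=0\). For a pure-jump process the transform multiplies each jump rate by the ratio of \(h\) after and before the jump, so I treat the three types of transitions separately.

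\begin{itemize}
\item \emph{Synchronized flips} \((x,y)\to(\flip_i x,\flip_i y)\) in \(\bar\cL^0_t\). The rate \(\tfrac12(1-2S_i)\) becomes \(\tfrac12(1-2S_i)r^\zeta_{t,i}\). Using
\[
1-2S_i=\frac{P_{T-t}f(\flip_ix)}{P_{T-t}f(x)},
\]
the \(f\)-ratios cancel and leave
\[
\frac{K^\zeta_t(\flip_ix)}{K^\zeta_t(x)}=\frac{1-\rho_tx_i\zeta_i}{1+\rho_tx_i\zeta_i}=\lambda^\zeta_{t,i}(x).
\]
This produces \(\cL^{0,\zeta}_t\).
\item \emph{Pure \(y\)-flips} coming from \(\1_{\{S_i>0\}}\delta_iS_i\Delta^y_i\). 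These do not move \(x\), so the ratio of \(h\) is \(1\) and the term is unchanged.
\item \emph{The \(\1_{\{S_i\le0\}}\) term.} Here \(\Delta^y_ih(\flip_ix,y)\) should be written as the difference of two transitions that both move \(x\) to \(\flip_i x\):
\[
\Delta^y_ih(\flip_ix,y)=\bigl(h(\flip_ix,\flip_iy)-h(x,y)\bigr)-\bigl(h(\flip_ix,y)-h(x,y)\bigr).
\]
Both transitions move \(x\), so each picks up the same factor \(r^\zeta_{t,i}\), and the term becomes \(r^\zeta_{t,i}\delta_iS_i\Delta^y_ih(\flip_ix,y)\).
\end{itemize}

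The stopping indicator is predictable and is unaffected by the transform. Because the rates are bounded (\(1-2S_i\in[C_\tau^{-1},C_\tau]\)) and \(H^\zeta_t>0\) on the relevant states, the transformed martingale problem is well-posed on \([\theta,\To]\). Hence the conditioned law is characterized by this generator.

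\emph{Localization.} For \(\theta\le s\le t\), \(E\in\mathcal F_\theta\subseteq\mathcal F_s\) gives
\[
\E[\1_E(M_t-M_\theta)\mid\mathcal F_s]=\1_E(M_s-M_\theta),
\]
and boundedness ensures integrability. This argument holds verbatim under \(\Pp^\zeta\) and under \(\Pp\).

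The main obstacle is justifying the \(h\)-transform for the path-dependent, signed-perturbation generator. I would handle it by checking that for every test \(h(x,y)\) the process \(H^\zeta_t(V_t)\,h(V_t,W_t)\) minus the compensator computed from the displayed generator is a \(\Pp\)-martingale. Bayes' rule, via \(d\Pp^\zeta/d\Pp\mid_{\mathcal F_t}=H^\zeta_t(V_t)/\Pp(V_T=\zeta)\), then transfers this to a \(\Pp^\zeta\)-martingale. The product-rule computation reduces to the same three cases as above, since \(H^\zeta_t\) depends only on \(x\).
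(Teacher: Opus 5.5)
Your proof is correct and follows essentially the same route as the paper: you identify \(H_t^\zeta(V_t)\) via \cref{lem:joint-filtration} with \(\Phi=\1_{\{\zeta\}}\) and the reverse-heat Bayes formula, obtain the conditioned generator as the Doob transform by \(H_t^\zeta(V_t)\) using the cancellation \(r_{t,i}^\zeta(1-2S_i)=\lambda_{t,i}^\zeta\), and localize martingale increments by \(\1_E\) with \(E\in\mathcal F_\theta\). One difference is a gain in rigor: the paper only says the \(S_i\le0\) term is ``attached to an \(x\)-flip,'' whereas you split \(\Delta_i^y h(\flip_i x,y)\) into two transitions that both move \(x\) to \(\flip_i x\), which cleanly justifies why that term receives the factor \(r_{t,i}^\zeta\).
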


\begin{proof}
By \cref{lem:joint-filtration}, applied with
\(\Phi=\1_{\{\zeta\}}\),
\[
  \Pp(V_T=\zeta\mid\mathcal F_t)
  =
  P^V_{t,T}\1_{\{\zeta\}}(V_t)
  =
  H_t^\zeta(V_t).
\]
Bayes' formula for the reverse heat process gives, for \(x\in\cube\),
\[
  H_t^\zeta(x)
  =
  K_t^\zeta(x)\frac{f(\zeta)}{P_{T-t}f(x)},
  \qquad
  K_t^\zeta(x)=2^{-n}\prod_j(1+\rho_t x_j\zeta_j),
  \qquad \rho_t=e^{-(T-t)}.
\]
In particular the conditioning weight depends only on \(V_t\), even though
the joint coupling is path-dependent through \(\1_{\{t\le\StopT\}}\).
Since
\[
  1-2S_i(t,x)=\frac{P_{T-t}f(\flip_i x)}{P_{T-t}f(x)},
\]
we have the cancellation
\[
  r^\zeta_{t,i}(x)(1-2S_i(t,x))
  =
  \frac{H_t^\zeta(\flip_i x)}{H_t^\zeta(x)}
  \frac{P_{T-t}f(\flip_i x)}{P_{T-t}f(x)}
  =
  \frac{K_t^\zeta(\flip_i x)}{K_t^\zeta(x)}
  =
  \lambda^\zeta_{t,i}(x).
\]
Under \(\Pp^\zeta=\Pp(\,\cdot\,\mid V_T=\zeta)\), the joint generator is
the Doob transform by the space-time harmonic function
\(H_t^\zeta(V_t)\).  Hence every term that flips the \(x\)-coordinate to
\(\flip_i x\) is multiplied by \(r^\zeta_{t,i}(x)\), while a term that
flips only the \(y\)-coordinate is not.  The synchronized part therefore
becomes
\[
  \frac12\sum_i r^\zeta_{t,i}(x)(1-2S_i(t,x))\Delta_i^{xy}
  =
  \frac12\sum_i \lambda^\zeta_{t,i}(x)\Delta_i^{xy},
\]
which is \(\cL_t^{0,\zeta}\).  In the perturbation part, the \(S_i>0\)
term flips only \(y\), so no factor \(r^\zeta_{t,i}\) appears; the
\(S_i\le0\) term is attached to an \(x\)-flip and receives the factor
\(r^\zeta_{t,i}\).  This identifies the conditioned generator and gives
the displayed formula for
\(\cB_t^\zeta\).
Finally, if \(E\in\mathcal F_\theta\), then \(E\in\mathcal F_t\) for all
\(t\ge\theta\).  Since all martingales used here are bounded on a finite
state space, multiplying their increments by \(\1_E\) preserves the
martingale property, both under \(\Pp^\zeta\) and under \(\Pp\).
\end{proof}

The next estimate is the local version of the terminal testing-discrepancy
calculation: the factor \(\1_E\) is inserted before the final
Cauchy--Schwarz step.  Since \(E\in\mathcal F_\theta\), it is fixed at the
perturbation start time and may be inserted before the Duhamel identity,
before the conditional bridge expansion, and before the Doob-transform
energy estimate.  No stopping-time localization after \(\theta\) is used.

\begin{lemma}[Localized terminal testing discrepancy]\label{lem:localized-discrepancy}
Let \(E\in\mathcal F_\theta\).  Define
\[
  \mu_E^W(A)=\Pp(E,W_\To\in A),\qquad
  \mu_E^V(A)=\Pp(E,V_\To\in A).
\]
With the probability convention
\[
  d_{\TV}(\mu,\nu)=\sup_{A\subseteq\cube}|\mu(A)-\nu(A)|,
\]
define
\[
  D_E
  =
  d_{\TV}(\mu_E^W,\mu_E^V)
  =
  \sup_{\phi:\cube\to\{0,1\}}
  \left|
  \E\left[
  \1_E\{\phi(W_\To)-\phi(V_\To)\}
  \right]\right|.
\]
Thus \(D_E\) is a localized terminal testing discrepancy between two
sub-probability measures; when \(E=\Omega\), it reduces to the usual
terminal testing discrepancy between the laws of \(W_\To\) and \(V_\To\).
The two sub-probability measures have the same total mass \(\Pp(E)\), so
this convention agrees with the usual indicator-testing normalization of
total variation.
Also define
\[
  \cS_E
  =
  \E\left[
  \1_E
  \int_\theta^{\StopT}
  \bdel^2\sum_{i=1}^n S_i(t,V_{t-})^2\,dt
  \right].
\]
For \(\theta\in[\To-1,\To)\),
\[
  D_E
  \le
  C_\tau\left(\cS_E+\sqrt{\cS_E\,\Pp(E)}\right).
\]
\end{lemma}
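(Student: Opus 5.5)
Fix a test $\phi\colon\cube\to\{0,1\}$ and identify it with its multilinear extension. First we reduce to conditioned processes. Write
\[
\E[\1_E\{\phi(W_\To)-\phi(V_\To)\}]=\sum_\zeta \Pp(V_T=\zeta)\,\E^\zeta[\1_E\{\phi(W_\To)-\phi(V_\To)\}].
\]
Under $\Pp^\zeta$ we use the space-time function $q_t^\zeta(x,y)=\phi(m_t(x,y,\zeta))$ from \cref{lem:bridge-algebra}. At $t=\To$ we have $\gamma_\To=1$, $a_\To=1$ and $b_\To=0$, so $m_\To=y$ and $q_\To^\zeta(V_\To,W_\To)=\phi(W_\To)$. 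At $t=\theta$ the product-mean identity shows that, on $\{W_\theta=V_\theta\}$, $q^\zeta_\theta(V_\theta,V_\theta)=\E^\zeta[\phi(V_\To)\mid\mathcal F_\theta]$: the unperturbed conditioned $V$-chain is the Boolean heat bridge, whose coordinate means at $\To$ are $m_\theta^{[i]}(x,x,\zeta)$. Since $E\in\mathcal F_\theta$, the localized discrepancy becomes $\E^\zeta[\1_E(q_\To^\zeta-q_\theta^\zeta)]$ evaluated along $(V,W)$.

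Next comes the Duhamel step. By \cref{lem:terminal-conditioning} the conditioned generator is $\cL_t^{0,\zeta}+\1_{\{t\le\StopT\}}\cB_t^\zeta$, and \eqref{eq:bridge-harmonic} removes the $\cL^{0,\zeta}$ part. Dynkin's formula, multiplied by $\1_E$ (allowed by the martingale clause of \cref{lem:terminal-conditioning}), then gives
\[
\E^\zeta[\1_E(q_\To^\zeta-q_\theta^\zeta)]=\E^\zeta\Big[\1_E\int_\theta^{\StopT}\cB_t^\zeta q_t^\zeta(V_{t-},W_{t-})\,dt\Big].
\]
Insert \eqref{eq:bridge-delta-y} and \eqref{eq:bridge-delta-y-flipx}. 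We get $\cB_t^\zeta q_t^\zeta=-2\sum_i \delta_iS_i\,c_i\,\partial_i\phi(m_t)$, where $c_i=a_ty_i+b_t\omega_i$ if $S_i>0$ and $c_i=r^\zeta_{t,i}(a_ty_i-b_t\omega_i)$ if $S_i\le0$.

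Split each $c_i$ into its $a_t$ part and its $b_t$ part. The perturbation is designed so that the $a_t$ parts nearly cancel: the $\delta_i$ formula on $\{S_i\le0\}$ compensates for the rate distortion by $1-2S_i$, and $r^\zeta_{t,i}(1-2S_i)=\lambda^\zeta_{t,i}$. After this cancellation, the $a_t$ part should become a term whose $\Pp$-average vanishes, namely the martingale part of $\phi(W)$ under the unconditioned coupling, plus a remainder of second order, bounded by $C_\tau\bdel^2\sum_iS_i^2$. This is the step I expect to be the main obstacle. I would first check it against Chen's unlocalized computation (recalled in the appendix, Lemma~11), confirming that every manipulation there commutes with the $\mathcal F_\theta$-measurable factor $\1_E$. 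Integrating the second-order remainder over $[\theta,\StopT]$ and averaging over $\zeta$ contributes $C_\tau\cS_E$.

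The first-order $b_t$ terms are handled by Cauchy--Schwarz with the factor $\1_E$ placed on both sides. One side is
\[
\Big(\E\,\1_E\!\int_\theta^{\StopT}\bdel^2\sum_iS_i^2\,dt\Big)^{1/2}=\cS_E^{1/2}.
\]
We may replace $\delta_i$ by $\bdel$ up to a factor $C_\tau$, using $1-2S_i\in[C_\tau^{-1},C_\tau]$, and $r^\zeta_{t,i}$ is bounded on the relevant range. The other side is
\[
\Big(\E\,\1_E\!\int_\theta^{\To}b_t^2\sum_i\lambda^\zeta_{t,i}|\partial_i\phi(m_t)|^2dt\Big)^{1/2}.
\]
By \eqref{eq:bridge-b-control} this is at most $C_\tau$ times $\E\,\1_E\int\sum_i(1-(m_t^{[i]})^2)|\partial_i\phi(m_t)|^2dt$.

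To bound this last quantity, I would apply the level-one inequality (\cref{lem:level-one}) pointwise; it gives $\sum_i(1-(m^{[i]}_t)^2)|\partial_i\phi(m_t)|^2\le q_t^\zeta(1-q_t^\zeta)\le1$. Since the time interval has length at most $1$, this controls the integral by $C_\tau\Pp(E)$, yielding the term $C_\tau\sqrt{\cS_E\Pp(E)}$.

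If the $\lambda$-weights mismatch, the alternative is to run the energy identity \eqref{eq:bridge-square}, again multiplied by $\1_E$. It bounds $\E^\zeta\1_E\int 2a_t^2\sum_i\lambda|\partial_i\phi|^2$ by $\E^\zeta\1_E[(q_\To^\zeta)^2-(q_\theta^\zeta)^2]$ minus the perturbation contribution, which is $\le\Pp^\zeta(E)+$ a term already controlled by $\cS_E$. Then $b_t\le C_\tau a_t$ on $[\theta,\To]$ since $\To-\theta\le1$. Summing over $\zeta$ with weights $\Pp(V_T=\zeta)$ and taking the supremum over $\phi$ gives $D_E\le C_\tau(\cS_E+\sqrt{\cS_E\Pp(E)})$.
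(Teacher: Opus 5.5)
You have a genuine gap in the $a_t$ part; the rest of your setup is fine. The conditioning on $V_T=\zeta$, the Dynkin step with $q_t^\zeta$ and $\1_E$, and your treatment of the $b_t$ terms are sound and agree with the paper. (For $b_t$ you use the $b$-control \eqref{eq:bridge-b-control} plus \cref{lem:level-one}, which gives a localized $b$-energy $\le C_\tau\Pp(E)$.)

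\textbf{The claimed $a_t$ cancellation does not exist.} For each coordinate $i$ only one branch ($S_i>0$ or $S_i\le0$) is active, so nothing pairs off. The identity $r^\zeta_{t,i}(1-2S_i)=\lambda^\zeta_{t,i}$ only gives $r^\zeta_{t,i}\delta_i=\bdel\lambda^\zeta_{t,i}/(1-2\bdel S_i)\le C_\tau\bdel$, which is a size bound, not a sign cancellation.

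Concretely, take $n=1$, $f(x)=1+cx$, $\phi=\1_{\{y=1\}}$. Averaging your $a_t$ part over $\zeta$ with weights $H_t^\zeta(x)$ gives exactly $-a_t\,\delta\,S(t,x)\,y$. On $\{W_t=V_t\}$, which has probability $1-O(\bdel)$, this equals $-a_t\delta c\rho_t/(1+c\rho_t x)$, with $\delta\ge\bdel$. That is of exact order $\bdel$ with the fixed sign of $-c$, so its $\Pp$-average is first order, not second order. Since $b_t\to0$ and $a_t\to1$ as $t\uparrow\To$, the $a_t$ part in fact carries essentially the whole first-order effect of the perturbation.

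It is also not a ``martingale part''. Dynkin's formula has already removed the martingale, and $\cB_t^\zeta q_t^\zeta$ is a compensator term. Checking against Chen's Lemma~11 would not rescue this, because that lemma is an energy bound $\Psi_a\lesssim1+\kappa\mathcal S_{\mathrm{Ch}}+\Psi_b$, not a cancellation.

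\textbf{The fix is to treat the $a_t$ part exactly like the $b_t$ part.} Apply Cauchy--Schwarz against $\cS_E^{1/2}$ and the localized $a$-energy $\Psi_a^E=\E[\1_E\int_\theta^{\To}a_t^2\sum_i\lambda^{V_T}_{t,i}(V_t)|\partial_i\phi(m_t(V_t,W_t,V_T))|^2\,dt]$. Then bound $\Psi_a^E$ through the square-energy identity \eqref{eq:bridge-square} under $\Pp^\zeta$, multiplied by $\1_E$, with an $\eps$-truncation at $\To$ for endpoint rigor.

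Your fallback paragraph does invoke this identity, but its perturbative remainder is not ``already controlled by $\cS_E$''. Using $|\Delta^y(q^2)|\le2|\Delta^y q|$, the weight bound $|\delta_i(\1_{\{S_i>0\}}+r^\zeta_{t,i}\1_{\{S_i\le0\}})|^2\le C_\tau\bdel^2\lambda^\zeta_{t,i}$, and Cauchy--Schwarz, the remainder is at most $C_\tau\cS_E^{1/2}(\Psi_a^E+\Psi_b^E)^{1/2}$. This contains the unknown energy itself. You must close $Y=\Psi_a^E+\Psi_b^E\le C_\tau\Pp(E)+C_\tau\cS_E^{1/2}Y^{1/2}$ by absorption to get $Y\le C_\tau(\Pp(E)+\cS_E)$. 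That yields $D_E\le C_\tau\cS_E^{1/2}(\Pp(E)+\cS_E)^{1/2}\le C_\tau(\cS_E+\sqrt{\cS_E\Pp(E)})$.

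So the linear term $C_\tau\cS_E$ in the statement comes from this absorption, not from a second-order remainder of the $a_t$ part. With this repair your argument becomes the paper's proof.
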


\begin{proof}
Fix \(\phi:\cube\to\{0,1\}\).  We prove the estimate for this \(\phi\)
and then take the supremum.  Throughout the proof, \(\phi\) is identified
with its multilinear extension to \([-1,1]^n\).

\smallskip
\noindent\emph{Localized Duhamel identity.}
Let \((V_t,W_t^0)\)
denote the unperturbed synchronized joint process, started from the same pair
\((V_\theta,W_\theta)\), and let
\[
  U_t(x,y)=
  \E\bigl[\phi(W^0_\To)\mid (V_t,W_t^0)=(x,y)\bigr].
\]
This function is deterministic once \(f,T,t\) are fixed; below we evaluate
the same backward solution along the perturbed path.  Then
\(U_\To(x,y)=\phi(y)\), and \(U_t\) solves the backward equation for the
unperturbed joint generator.  More explicitly, let
\(\cL_t^0\) be the synchronized unperturbed predictable generator and let
\(\cL_t^\delta=\cL_t^0+\1_{\{t\le\StopT\}}\cB_t\) be the predictable
generator of the perturbed joint process.  Since the state space is finite
and \(t\le\To<T\), each coordinate jump rate and each finite difference
appearing below is bounded by a constant depending only on \(\tau\); for each
fixed \(n\), the total rate is finite.  Thus
\[
  U_t(V_t,W_t)-U_\theta(V_\theta,W_\theta)
  -\int_\theta^t\1_{\{s\le\StopT\}}
  \cB_s U_s(V_{s-},W_{s-})\,ds
\]
is a martingale.  Since \(E\in\mathcal F_\theta\), multiplying this
identity by \(\1_E\) preserves the martingale property.  Also, on the
unperturbed synchronized coupling, \(W_t^0=V_t\) when
\(W_\theta^0=V_\theta\).  Hence
\[
  U_\theta(V_\theta,W_\theta)
  =
  \E[\phi(V_\To)\mid\mathcal F_\theta],
  \qquad
  U_\To(V_\To,W_\To)=\phi(W_\To),
\]
and, since \(E\in\mathcal F_\theta\),
\[
  \E[\1_E U_\To(V_\To,W_\To)]
  =
  \E[\1_E\phi(W_\To)],
  \qquad
  \E[\1_E U_\theta(V_\theta,W_\theta)]
  =
  \E[\1_E\phi(V_\To)].
\]
Therefore
\begin{align}
  \label{eq:localized-duhamel}
  \E\left[\1_E(\phi(W_\To)-\phi(V_\To))\right]
  =
  \E\left[
  \1_E\int_\theta^{\StopT}
  \cB_t U_t(V_{t-},W_{t-})\,dt
  \right],
\end{align}
where \(\cB_t\) is the perturbation part of the predictable joint generator.

The perturbation leaves the \(V\)-marginal unchanged: for every test
function depending only on \(x\), the operator \(\cB_t\) vanishes.  Hence
\(V_T\) and the bridge weights \(H_t^\zeta(V_t)\) appearing below are
those of the original reverse heat process throughout the Duhamel
computation.

\smallskip
\noindent\emph{Bridge expansion.}
We next recall the bridge representation used to estimate \(\cB_t U_t\);
the coefficients \(a_t,b_t,m_t,\lambda_{t,i}^\zeta\) are those of
\cref{lem:bridge-algebra}.
For \(\zeta\in\cube\), set
\[
  K_t^\zeta(x)=2^{-n}\prod_{j=1}^n(1+\rho_t x_j\zeta_j),
  \qquad \rho_t=e^{-(T-t)}.
\]
Thus \(K_t^\zeta(x)\) is the forward heat probability of being at
\(\zeta\) at time \(T\), starting from \(x\) at time \(t\).  The actual
reverse-process bridge weight is
\[
  H_t^\zeta(x)
  =
  \Pp(V_T=\zeta\mid V_t=x)
  =
  \frac{K_t^\zeta(x)f(\zeta)}{P_{T-t}f(x)},
  \qquad
  r_{t,i}^\zeta(x)=\frac{H_t^\zeta(\flip_i x)}{H_t^\zeta(x)}.
\]
Since \(P_{T-t}f(\flip_i x)/P_{T-t}f(x)=1-2S_i(t,x)\), the heat-kernel
ratio gives the useful identity
\[
  r_{t,i}^\zeta(x)(1-2S_i(t,x))
  =
  \frac{K_t^\zeta(\flip_i x)}{K_t^\zeta(x)}
  =
  \lambda_{t,i}^\zeta(x).
\]
Under the conditional law \(\Pp^\zeta=\Pp(\,\cdot\,\mid V_T=\zeta)\), the
unperturbed synchronized generator is the Doob transform
\[
  \cL_t^{0,\zeta}h(x,y)
  =
  \frac12\sum_{i=1}^n\lambda_{t,i}^\zeta(x)\Delta_i^{xy}h(x,y),
\]
where
\[
  \lambda_{t,i}^\zeta(x)
  =
  \frac{1-\rho_t x_i\zeta_i}{1+\rho_t x_i\zeta_i}.
\]
For \(t\le\To\), the edge-ratio bound implies
\[
  C_\tau^{-1}\le \lambda_{t,i}^\zeta(x)\le C_\tau.
\]
The perturbation part under \(\Pp^\zeta\) is
\[
  \cB_t^\zeta h(x,y)
  =
  \sum_i\1_{\{S_i>0\}}\delta_iS_i\Delta_i^yh(x,y)
  +
  \sum_i\1_{\{S_i\le0\}}r_{t,i}^\zeta\delta_iS_i
  \Delta_i^yh(\flip_i x,y),
\]
where \(S_i=S_i(t,x)\) and \(\delta_i=\delta_i(t,x)\).  Moreover,
\[
  \left|\delta_i
  \bigl(\1_{\{S_i>0\}}+r_{t,i}^\zeta\1_{\{S_i\le0\}}\bigr)\right|^2
  \le C_\tau\,\bdel^2\,\lambda_{t,i}^\zeta(x).
\]
Indeed, if \(S_i>0\), then \(\delta_i=\bdel\) and
\(\lambda_{t,i}^\zeta\ge C_\tau^{-1}\).  If \(S_i\le0\), then
\[
  \delta_i=\bdel\,\frac{1-2S_i}{1-2\bdel S_i},
  \qquad
  r_{t,i}^\zeta(1-2S_i)=\lambda_{t,i}^\zeta,
\]
so
\[
  r_{t,i}^\zeta\delta_i
  =
  \bdel\,\frac{\lambda_{t,i}^\zeta}{1-2\bdel S_i}.
\]
Since \(S_i\le0\), the denominator is at least \(1\), and
\(\lambda_{t,i}^\zeta\le C_\tau\).  This gives the displayed bound.

For \(t<\To\), let \(m_t(x,y,\zeta)\in(-1,1)^n\) be the product mean of
the Boolean heat bridge from \(t\) to \(\To\), conditioned on \(V_t=x\) and
\(V_T=\zeta\), after the synchronized sign change \(x\odot y\).  Write
\[
  q_t^\zeta(x,y)=\phi(m_t(x,y,\zeta)).
\]
Equivalently, \(q_t^\zeta\) is the conditional expectation of the
\(\{0,1\}\)-valued terminal test \(\phi(W_\To^0)\) under the Boolean bridge,
so \(0\le q_t^\zeta\le1\).
The bridge formula in \cref{lem:bridge-algebra} gives
\begin{align}
  \label{eq:bridge-derivatives-local}
  \Delta_i^yq_t^\zeta(x,y)
    &=-2(a_ty_i+b_t\omega_i)\partial_i\phi(m_t(x,y,\zeta)), \notag\\
  \Delta_i^yq_t^\zeta(\flip_i x,y)
    &=-2(a_ty_i-b_t\omega_i)\partial_i\phi(m_t(x,y,\zeta)),
\end{align}
where \(\omega_i=x_iy_i\zeta_i\).  Consequently,
\[
  |\Delta_i^yq_t^\zeta(x,y)|^2+
  |\Delta_i^yq_t^\zeta(\flip_i x,y)|^2
  \le
  C(a_t^2+b_t^2)\,|\partial_i\phi(m_t(x,y,\zeta))|^2.
\]
By finite-state regular disintegration,
\[
  \E[\cdot\mid\mathcal F_t]
  =
  \sum_{\zeta\in\cube}
  \Pp(V_T=\zeta\mid\mathcal F_t)
  \E[\cdot\mid\mathcal F_t,V_T=\zeta].
\]
By \cref{lem:terminal-conditioning}, \(H_t^\zeta(V_t)\) is the
conditional law of \(V_T\) even after conditioning on the joint past
\(\mathcal F_t\), not only after conditioning on the natural filtration of
\(V\).  Therefore expanding \(U_t\) through \(V_T\) gives
\[
  U_t(x,y)=\sum_{\zeta\in\cube}H_t^\zeta(x)q_t^\zeta(x,y).
\]
Consequently the perturbative generator decomposes as
\[
  \cB_t U_t(x,y)
  =
  \sum_{\zeta\in\cube}
  H_t^\zeta(x)\cB_t^\zeta q_t^\zeta(x,y).
\]
Indeed, for each fixed \(\zeta\),
\[
  \cB_t(H_t^\zeta q_t^\zeta)(x,y)
  =
  H_t^\zeta(x)\cB_t^\zeta q_t^\zeta(x,y).
\]
For the \(S_i>0\) part, \(\cB_t\) flips only the \(y\)-coordinate, so
the coefficient \(H_t^\zeta(x)\) is unchanged.  For the \(S_i\le0\) part,
\[
  \Delta_i^y(H_t^\zeta q_t^\zeta)(\flip_i x,y)
  =
  H_t^\zeta(\flip_i x)\Delta_i^yq_t^\zeta(\flip_i x,y)
  =
  H_t^\zeta(x)r_{t,i}^\zeta(x)
  \Delta_i^yq_t^\zeta(\flip_i x,y),
\]
which is exactly the factor appearing in \(\cB_t^\zeta\).

\smallskip
\noindent\emph{Pointwise perturbation bound.}
Combining the last three displays gives
\[
  |\cB_t U_t(x,y)|
  \le C_\tau
  \sum_{\zeta}H_t^\zeta(x)\sum_i
  \bdel \abs{S_i(t,x)}\,\lambda_{t,i}^\zeta(x)^{1/2}
  (a_t^2+b_t^2)^{1/2}
  |\partial_i\phi(m_t(x,y,\zeta))|.
\]
Applying weighted Cauchy--Schwarz over the product index \((\zeta,i)\),
with weights \(H_t^\zeta(x)\), yields the pointwise estimate
\begin{align}
  \label{eq:pointwise-AtU}
  |\cB_t U_t(x,y)|
  \le
  C_\tau
  \left(\bdel^2\sum_iS_i(t,x)^2\right)^{1/2}
  \Gamma_t(x,y)^{1/2},
\end{align}
where
\[
  \Gamma_t(x,y)=
  \sum_{\zeta}H_t^\zeta(x)
  \sum_i\lambda_{t,i}^\zeta(x)(a_t^2+b_t^2)
  |\partial_i\phi(m_t(x,y,\zeta))|^2.
\]
From \eqref{eq:localized-duhamel}, \eqref{eq:pointwise-AtU}, and
Cauchy--Schwarz,
\begin{align}
  \label{eq:localized-tv-cauchy}
  \left|\E\left[\1_E(\phi(W_\To)-\phi(V_\To))\right]\right|
  \le
  C_\tau\,\cS_E^{1/2}
  \left(\E\left[\1_E\int_\theta^\To
  \Gamma_t(V_{t-},W_{t-})\,dt\right]\right)^{1/2}.
\end{align}

\smallskip
\noindent\emph{Energy closure.}
It remains to localize the bridge-gradient energy.  Define
\begin{align*}
  \Psi_a^E
  &=
  \E\left[\1_E\int_\theta^\To
  a_t^2\sum_i\lambda_{t,i}^{V_T}(V_t)
  |\partial_i\phi(m_t(V_t,W_t,V_T))|^2\,dt\right],\\
  \Psi_b^E
  &=
  \E\left[\1_E\int_\theta^\To
  b_t^2\sum_i\lambda_{t,i}^{V_T}(V_t)
  |\partial_i\phi(m_t(V_t,W_t,V_T))|^2\,dt\right].
\end{align*}
In Lebesgue-time integrals we freely replace \(V_{t-},W_{t-}\) by
\(V_t,W_t\), since the jump times are countable almost surely.
Then the energy in \eqref{eq:localized-tv-cauchy} is bounded by
\(C(\Psi_a^E+\Psi_b^E)\).
For endpoint rigor, fix \(0<\eps<\To-\theta\) and let
\(\Psi_{a,\eps}^E,\Psi_{b,\eps}^E\) denote the same quantities with the
upper limit \(\To-\eps\).  We first prove the estimates below for these
truncated energies, with constants independent of \(\eps\).  Monotone
convergence then gives the displayed full-time bounds.  To keep notation
readable, the subscript \(\eps\) is suppressed until the final limiting
step.

First consider \(\Psi_b^E\).  The bridge algebra
\eqref{eq:bridge-b-control} gives
\[
  \frac{\lambda_{t,i}^{\zeta}(x)b_t^2}
       {1-m_t^{[i]}(x,y,\zeta)^2}
  \le C_\tau.
\]
Using \cref{lem:level-one} at the point \(m_t(x,y,\zeta)\),
\[
  \sum_i(1-m_t^{[i]}(x,y,\zeta)^2)
  |\partial_i\phi(m_t(x,y,\zeta))|^2
  \le \frac14.
\]
Hence the integrand defining \(\Psi_b^E\) is at most \(C_\tau\), and
since \(\To-\theta\le1\),
\begin{align}
  \label{eq:local-psi-b}
  \Psi_b^E\le C_\tau\Pp(E).
\end{align}

We now estimate \(\Psi_a^E\).  Fix \(\zeta\) and work under \(\Pp^\zeta\).
By \eqref{eq:bridge-square},
\[
  (\partial_t+\cL_t^{0,\zeta})(q_t^\zeta)^2
  =
  \frac12\sum_i\lambda_{t,i}^{\zeta}
  \bigl(\Delta_i^{xy}q_t^\zeta\bigr)^2
  =
  2a_t^2\sum_i\lambda_{t,i}^\zeta
  |\partial_i\phi(m_t)|^2.
\]
Because \(E\in\mathcal F_\theta\), multiplying It\^o's formula for
\((q_t^\zeta(V_t,W_t))^2\) by \(\1_E\) is legitimate by
\cref{lem:terminal-conditioning}.  More explicitly, applying It\^o's
formula under \(\Pp^\zeta\), averaging over \(\zeta=V_T\), and using the
conditioned generator from \cref{lem:terminal-conditioning}, gives
\begin{align*}
  2\Psi_{a,\eps}^E
  &=
  \E\left[
  \1_E (q_{\To-\eps}^{V_T})^2(V_{\To-\eps},W_{\To-\eps})
  \right]
  -
  \E\left[
  \1_E (q_{\theta}^{V_T})^2(V_\theta,W_\theta)
  \right] \\
  &\quad -
  \E\left[
  \1_E\int_\theta^{\To-\eps}
  \1_{\{t\le\StopT\}}
  \cB_t^{V_T}(q_t^{V_T})^2(V_{t-},W_{t-})\,dt
  \right].
\end{align*}
Since \(0\le q_t^\zeta\le1\), this yields
\[
  \Psi_{a,\eps}^E
  \le
  C\Pp(E)
  +
  C\,\E\left[\1_E\int_\theta^{\To-\eps}
  \1_{\{t\le\StopT\}}
  |\cB_t^{V_T}(q_t^{V_T})^2(V_{t-},W_{t-})|\,dt\right].
\]
Since \(0\le q_t^\zeta\le1\),
\[
  |\Delta_i^y(q_t^\zeta)^2(x,y)|
  \le2|\Delta_i^yq_t^\zeta(x,y)|,\qquad
  |\Delta_i^y(q_t^\zeta)^2(\flip_i x,y)|
  \le2|\Delta_i^yq_t^\zeta(\flip_i x,y)|.
\]
Using the displayed formula for \(\cB_t^\zeta\), the estimate
\[
  \left|\delta_i
  \bigl(\1_{\{S_i>0\}}+r_{t,i}^\zeta\1_{\{S_i\le0\}}\bigr)\right|^2
  \le C_\tau\bdel^2\lambda_{t,i}^\zeta,
\]
and Cauchy--Schwarz in the coordinate \(i\), we get
\[
  |\cB_t^\zeta(q_t^\zeta)^2|
  \le
  C_\tau
  \left(\bdel^2\sum_iS_i(t,V_{t-})^2\right)^{1/2}
  \left(\Gamma_{a,t}^\zeta+\Gamma_{b,t}^\zeta\right)^{1/2},
\]
where
\[
  \Gamma_{a,t}^\zeta
  =
  a_t^2\sum_i\lambda_{t,i}^{\zeta}
  |\partial_i\phi(m_t(x,y,\zeta))|^2,\qquad
  \Gamma_{b,t}^\zeta
  =
  b_t^2\sum_i\lambda_{t,i}^{\zeta}
  |\partial_i\phi(m_t(x,y,\zeta))|^2.
\]
After evaluating at \((V_{t-},W_{t-},V_T)\), multiplying by
\(\1_E\1_{\{t\le\StopT\}}\), integrating in time, and applying
Cauchy--Schwarz, the last display gives
\[
  \Psi_a^E
  \le
  C_\tau\Pp(E)
  +
  C_\tau\cS_E^{1/2}(\Psi_a^E+\Psi_b^E)^{1/2}.
\]
Set \(Y=\Psi_a^E+\Psi_b^E\).  Combining the last display with
\eqref{eq:local-psi-b} gives
\[
  Y\le C_\tau\Pp(E)+C_\tau\cS_E^{1/2}Y^{1/2}.
\]
Using \(uv\le \frac12v^2+C_\tau u^2\), with
\(u=\cS_E^{1/2}\) and \(v=Y^{1/2}\), we obtain
\[
  Y\le C_\tau\bigl(\Pp(E)+\cS_E\bigr).
\]
Restoring the suppressed truncation parameter, this estimate is uniform
for \(Y_\eps=\Psi_{a,\eps}^E+\Psi_{b,\eps}^E\).  Letting
\(\eps\downarrow0\) and using monotone convergence gives the same bound
for the full energies \(\Psi_a^E+\Psi_b^E\).
In particular,
\[
  \Psi_a^E
  \le C_\tau\bigl(\Pp(E)+\cS_E\bigr).
\]
Plugging the bound on \(Y=\Psi_a^E+\Psi_b^E\) into
\eqref{eq:localized-tv-cauchy} gives
\[
  \left|\E\left[\1_E(\phi(W_\To)-\phi(V_\To))\right]\right|
  \le
  C_\tau\cS_E^{1/2}\bigl(\Pp(E)+\cS_E\bigr)^{1/2}
  \le
  C_\tau\left(\sqrt{\cS_E\Pp(E)}+\cS_E\right).
\]
Taking the supremum over \(\phi\) proves the lemma.
\end{proof}

\begin{corollary}[Layered discrepancy bound]\label{cor:layer-tv}
Let
\[
  G_r(\theta)=\{r\le R_\theta<r+1\},
  \qquad r\ge \alpha/2.
\]
Then, for \(r\ge \alpha/2\),
\[
  D_{G_r(\theta)}
  \le
  C_\tau
  \left(
  \frac{\alpha}{\sqrt r}+\frac{\alpha^2}{r}
  \right)\Pp(G_r(\theta)).
\]
More generally, the same estimate holds with \(G_r(\theta)\) replaced by
any \(E\in\mathcal F_\theta\) such that \(E\subset G_r(\theta)\).
Moreover, for \(G_{\ge L/2}(\theta)=\{R_\theta\ge L/2\}\),
\[
  D_{G_{\ge L/2}(\theta)}
  \le
  C_\tau\left(\frac{\alpha}{\sqrt L}+\frac{\alpha^2}{L}\right).
\]
\end{corollary}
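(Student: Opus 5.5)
The plan is to combine \cref{lem:localized-discrepancy} with \cref{lem:score-energy}, using that \(\bdel\) and \(R_\theta\) are essentially constant on each layer. Fix \(E\in\mathcal F_\theta\) with \(E\subset G_r(\theta)\), where \(r\ge\alpha/2\). Since \(\bdel\) is \(\mathcal F_\theta\)-measurable, it factors out of the time integral in \(\cS_E\). The second display of \cref{lem:score-energy} then gives
\[
  \cS_E\le C_\tau\,\E\left[\1_E\bdel^2(R_\theta+\alpha+1)\right].
\]
On \(E\) we have \(r\le R_\theta<r+1\), so pointwise
\[
  \bdel^2(R_\theta+\alpha+1)\le \frac{\alpha^2\,\1_{\{R_\theta\ge\alpha\}}}{(R_\theta+1)^2}(R_\theta+\alpha+1)
  \le \frac{\alpha^2}{(r+1)^2}(r+\alpha+2).
\]
Because \(r\ge\alpha/2\) and \(\alpha\ge1\), we have \(r+\alpha+2\le 3r+2\le C(r+1)\). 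Hence \(\cS_E\le C_\tau\alpha^2\Pp(E)/(r+1)\le C_\tau(\alpha^2/r)\Pp(E)\). The indicator \(\1_{\{R_\theta\ge\alpha\}}\) only helps, since it makes \(\cS_E=0\) on layers with \(r+1\le\alpha\).

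Insert this into \cref{lem:localized-discrepancy}:
\[
  D_E\le C_\tau\left(\frac{\alpha^2}{r}\Pp(E)+\sqrt{\frac{\alpha^2}{r}}\,\Pp(E)\right)
  =C_\tau\left(\frac{\alpha}{\sqrt r}+\frac{\alpha^2}{r}\right)\Pp(E).
\]
With \(E=G_r(\theta)\) this is the first claim, and for general \(E\subset G_r(\theta)\) it is the ``more generally'' statement. The argument is the same in both cases because only the pointwise bound on \(E\) was used.

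For the tail set \(G_{\ge L/2}(\theta)=\{R_\theta\ge L/2\}\), the same computation applies with the layer bound replaced by \(R_\theta\ge L/2\ge\alpha\), which holds since \(L\ge8\) gives \(\alpha=\frac12\log L+1\le L/2\). The map \(R\mapsto (R+\alpha+1)/(R+1)^2\) is decreasing for \(R\ge0\), so on this set
\[
  \bdel^2(R_\theta+\alpha+1)\le \frac{\alpha^2(L/2+\alpha+1)}{(L/2+1)^2}\le \frac{C\alpha^2}{L}.
\]
Thus \(\cS_E\le C_\tau\alpha^2\Pp(E)/L\), and \cref{lem:localized-discrepancy} gives \(D_E\le C_\tau(\alpha/\sqrt L+\alpha^2/L)\Pp(E)\). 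Bounding \(\Pp(E)\le1\) yields the stated form.

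The proof is routine once \cref{lem:localized-discrepancy} is available. The one point needing care is that \cref{lem:score-energy} is applied in its \(\mathcal F_\theta\)-conditional form, so the factor \(\1_E\bdel^2\) can be pulled outside the conditional expectation before the pointwise layer bound is taken. Keeping this step pointwise, rather than bounding \(R_\theta\) only on average, is exactly what makes the estimate layerwise and produces the factor \(\Pp(E)\) rather than \(\sqrt{\Pp(E)}\) times a global quantity.
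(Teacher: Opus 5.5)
Your proposal is correct and matches the paper's proof essentially step for step. The shared argument bounds $\cS_E$ through the $\mathcal F_\theta$-conditional score-energy estimate, using the pointwise bound $\bdel^2(R_\theta+\alpha+1)\le C\alpha^2/r$ on a layer (and $\le C\alpha^2/L$ on the tail, where $\alpha\le L/2$ keeps the perturbation active), and then inserts this into the localized discrepancy lemma; your explicit checks ($r+\alpha+2\le 3(r+1)$ and the monotonicity of $R\mapsto(R+\alpha+1)/(R+1)^2$) only make the paper's constants explicit.
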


\begin{proof}
Let \(E\in\mathcal F_\theta\) with \(E\subset G_r(\theta)\).  The active
part is \(E\cap\{R_\theta\ge\alpha\}\).  On this active part,
\[
  \bdel^2(R_\theta+\alpha+1)
  =
  \frac{\alpha^2(R_\theta+\alpha+1)}{(R_\theta+1)^2}
  \le
  C\frac{\alpha^2}{r},
  \qquad r\ge\alpha/2,
\]
while outside it \(\bdel=0\).  By \cref{lem:score-energy},
\[
  \cS_E
  \le
  C_\tau\frac{\alpha^2}{r}\Pp(E).
\]
Plugging this into \cref{lem:localized-discrepancy} gives the subset
estimate, and taking \(E=G_r(\theta)\) gives the first displayed bound.  On
\(E=G_{\ge L/2}(\theta)\), since \(L\ge8\) gives \(\alpha\le L/2\), the
perturbation is active and
\[
  \bdel^2(R_\theta+\alpha+1)
  =
  \frac{\alpha^2(R_\theta+\alpha+1)}{(R_\theta+1)^2}
  \le
  \frac{C \alpha^2}{R_\theta+1}
  \le
  \frac{C \alpha^2}{L}.
\]
Together with \cref{lem:score-energy} this gives
\(\cS_E\le C_\tau \alpha^2\Pp(E)/L\), and the second estimate follows from
\cref{lem:localized-discrepancy} and \(\Pp(E)\le1\).  No layer-profile estimate is
needed for this tail layer: it appears only once in the final
decomposition, and the crude bound \(\Pp(E)\le1\) already gives
\(O_\tau(\alpha/\sqrt L)\).
\end{proof}

\section{Proof of fixed-time anti-concentration}\label{sec:proof-fixed-time}

In this section, we apply our layerwise localization approach given in the preceding section to complete the proof of Proposition~\ref{prop:AC}.

\begin{proof}[Proof of \cref{prop:AC}]
Fix \(L\ge8\), and put \(\alpha=\frac12\log L+1\).  For a fixed
\(\theta\in[\To-1,\To)\), first compare the two tails at level \(L\).
Since \(\bdel=0\) on \(\{R_\theta<\alpha\}\), the perturbation term is
identically zero after \(\theta\) on this event and the two synchronized
processes coincide up to \(\To\).  Hence, for
\(\varphi=\1_{\{g_\tau>L\}}\),
\[
  \Pp\{g_\tau(V_\To)>L\}
  \le
  \Pp\{g_\tau(W_\To)>L\}+D_{\{R_\theta\ge \alpha\}}.
\]
Since
\[
  \cA_\tau((L,L+1])
  =
  \Pp\{g_\tau(V_\To)>L\}
  -
  \Pp\{g_\tau(V_\To)>L+1\},
\]
combining the last display with \cref{lem:monotone} gives the bootstrap
inequality in one line:
\begin{align*}
  \cA_\tau((L,L+1])
  &\le
  \Pp\{g_\tau(W_\To)>L\}
  +D_{\{R_\theta\ge \alpha\}}
  -
  \Pp\{g_\tau(V_\To)>L+1\} \\
  &\le
  D_{\{R_\theta\ge \alpha\}}
  +\cA_{T-\theta}((L-\alpha,L+\alpha])
  +\frac{3}{\sqrt L}.
\end{align*}
Decompose
\[
  \{R_\theta\ge \alpha\}
  =
  \left(\bigcup_{r=\lfloor \alpha\rfloor}^{\lfloor L/2\rfloor}
  E_r(\theta)\right)
  \cup G_{\ge L/2}(\theta),
  \qquad
  E_r(\theta)=G_r(\theta)\cap\{\alpha\le R_\theta<L/2\}.
\]
This union is disjoint.  The intersection in the definition of
\(E_r(\theta)\) only trims the boundary pieces near \(R_\theta=\alpha\) and
\(R_\theta=L/2\); the subset version of \cref{cor:layer-tv} is included
precisely for this use.
For any disjoint family \((F_j)\subset\mathcal F_\theta\), the definition
of \(D_E\) and the triangle inequality give
\[
  D_{\bigcup_jF_j}
  \le \sum_j D_{F_j}.
\]
Indeed, this holds for each fixed test function \(\phi\), and then one
takes the supremum over \(\phi\).
By \cref{cor:layer-tv},
\[
  D_{\{R_\theta\ge \alpha\}}
  \le
  C_\tau
  \sum_{r=\lfloor \alpha\rfloor}^{\lfloor L/2\rfloor}
  \left(\frac{\alpha}{\sqrt r}+\frac{\alpha^2}{r}\right)\Pp(E_r(\theta))
  +
  C_\tau\left(\frac{\alpha}{\sqrt L}+\frac{\alpha^2}{L}\right).
\]
Averaging \(\theta\) over \([\To-1,\To]\), and writing \(s=T-\theta\),
we have \(s\in[\tau,\tau+1]\).  For
\(\lfloor \alpha\rfloor\le r\le L/2\),
the identity below follows from
\(R_\theta=[L-g_{T-\theta}(V_\theta)]_+\): on \(G_r(\theta)\), one has
\(L-r-1<g_{T-\theta}(V_\theta)\le L-r\).  Hence
\[
  \int_{\To-1}^{\To}\Pp(E_r(\theta))\,d\theta
  \le
  \int_{\To-1}^{\To}\Pp(G_r(\theta))\,d\theta
  =
  \int_\tau^{\tau+1}
  \cA_s((L-r-1,L-r])\,ds
  \le \frac{C}{L-r-1}
  \le \frac{C}{L}
\]
by \cref{lem:time-profile}, since
\(L-r-1\ge L/2-1\ge L/4\) for \(L\ge8\).  Therefore
\begin{align*}
  \int_{\To-1}^{\To}D_{\{R_\theta\ge \alpha\}}\,d\theta
  &\le
  C_\tau
  \frac{\alpha}{L}\sum_{r=\lfloor \alpha\rfloor}^{\lfloor L/2\rfloor}r^{-1/2}
  +
  C_\tau
  \frac{\alpha^2}{L}\sum_{r=\lfloor \alpha\rfloor}^{\lfloor L/2\rfloor}r^{-1}
  +
  C_\tau\frac{\alpha}{\sqrt L}                                      \\
  &\le
  C_\tau\frac{\alpha}{\sqrt L}.
\end{align*}
Here the first sum is \(O(\sqrt L)\), while the second is \(O(\log L)\);
since \(\alpha=\frac12\log L+1\), the term
\[
  \frac{\alpha^2}{L}\sum_{r=\lfloor \alpha\rfloor}^{\lfloor L/2\rfloor}r^{-1}
  \le
  C\frac{\alpha^2\log L}{L}
  =
  C\frac{\alpha}{\sqrt L}\cdot\frac{\alpha\log L}{\sqrt L}
  \le
  C\frac{\alpha}{\sqrt L}.
\]
The last inequality holds because, for \(L\) larger than an absolute
constant, \(\alpha\log L\le C\sqrt L\), while the remaining compact range
\(8\le L\le L_0\) is absorbed into the constant.
The initial-band term is bounded by covering \((L-\alpha,L+\alpha]\) with the
half-open unit intervals \((m,m+1]\) which intersect it; there are at most
\(2\alpha+2\) of them.  Since \(L\ge8\), every such interval has
\(m\ge L-\alpha-1>2\), so these unit intervals lie in the range of
\cref{lem:time-profile}.  Thus
\[
  \int_{\To-1}^{\To}
  \cA_{T-\theta}((L-\alpha,L+\alpha])\,d\theta
  \le
  \frac{C \alpha}{L-\alpha}
  \le
  C\frac{\alpha}{L}
  \le C\frac{\alpha}{\sqrt L}.
\]
Also \(3/\sqrt L\le3\alpha/\sqrt L\), since \(\alpha\ge1\).  Thus the averaged
bootstrap inequality gives
\[
  \cA_\tau((L,L+1])
  \le C_\tau\frac{\alpha}{\sqrt L}
  \le C_\tau\frac{\log L}{\sqrt L}.
\]
\end{proof}

\section{Discussion}\label{sec:discussion}

\paragraph{Where the layerwise gain enters.}
Without localization, the terminal-discrepancy estimate controls the
perturbation by
\[
  \alpha\sqrt{\E\frac{\1_{\{R_\theta\ge \alpha\}}}{R_\theta+1}}.
\]
After time averaging, the expectation is bounded by \((\log L)/L\), which
yields a \((\log L)^{3/2}/\sqrt L\) fixed-time scale.  The localized
estimate keeps the layer event \(G_r(\theta)\) inside the Duhamel identity
and the terminal bridge estimate.  It replaces this global term by the
layer sum
\[
  \sum_r\frac{\alpha}{\sqrt r}\Pp(G_r),
\]
and the same time-smoothed profile estimate leads to
\[
  \frac{\alpha}{L}\sum_{r\le L/2}r^{-1/2}
  \lesssim \frac{\alpha}{\sqrt L}.
\]

\paragraph{Origin of the remaining logarithm.}\label{par:layerwise-limitation}
Put \(L=\log\eta\) and \(\alpha\simeq\log L\).  After averaging the start time
\(\theta\in[\To-1,\To]\), the proof uses the layer estimate
\[
  \int_{\To-1}^{\To}D_{G_r(\theta)}\,d\theta
  \lesssim
  \left(\frac{\alpha}{\sqrt r}+\frac{\alpha^2}{r}\right)\overline p_r,
  \qquad
  \overline p_r:=
  \int_{\To-1}^{\To}\Pp(G_r(\theta))\,d\theta,
\]
and the time-smoothed profile consequence
\[
  \overline p_r
  \lesssim
  \frac1{L-r-1},
  \qquad \alpha\le r\le L/2.
\]
On the middle range \(L/4\le r\le L/2\), this gives
\(\overline p_r\lesssim L^{-1}\).  The abstract layer profile
\[
  \overline p_r=\frac{c}{L},\qquad L/4\le r\le L/2,
  \qquad
  \overline p_r=0\quad\text{otherwise},
\]
with \(c>0\) small, is compatible with the profile information used in the
argument.  If the localized Cauchy--Schwarz estimates are saturated on
these layers, the contribution allowed by the method is
\[
  \sum_{L/4\le r\le L/2}
  \frac{\alpha}{\sqrt r}\frac{c}{L}
  \asymp
  \frac{\alpha}{L}\sum_{L/4\le r\le L/2}r^{-1/2}
  \asymp
  \frac{\alpha}{\sqrt L}.
\]
Since \(\alpha\simeq\log L\), this is the fixed-time
\((\log L)/\sqrt L\) scale, equivalently the weak-type factor
\[
  \frac{\log\log\eta}{\eta\sqrt{\log\eta}}.
\]
Within this layerwise Cauchy--Schwarz framework, an improvement would have
to rule out this middle-layer occupation profile or reduce the
\(\alpha/\sqrt r\) cost on those layers.

\appendix

\section{Additional lemmas quoted in the paper}\label{app:chen-inputs}

This appendix records the statements from Chen \cite{Chen2026} that are quoted
in the body of the paper but not already displayed there.  They are written in
the notation of this paper.  In particular \(L=\log\eta\),
\(\alpha=\frac12\log L+1\),
\[
  R_\theta=[L-g_{T-\theta}(V_\theta)]_+,\qquad
  I_\theta=\E\frac{\1_{\{R_\theta\ge\alpha\}}}{R_\theta+1},
\]
and Chen's constant \(\kappa=(1+e^{-\tau})/(1-e^{-\tau})\) is absorbed into
constants denoted by \(C_\tau\) in the main text.  When \(f\) is evaluated at
a point of \((-1,1)^n\), it denotes the multilinear extension of the function
on the cube.

\begin{lemma}[Chen's TV distance control {\cite[Lemma~2]{Chen2026}}]
\label{app:chen-lemma2}
Let \(\eta>e^3\), \(f:\cube\to(0,\infty)\) with \(\|f\|_1=1\), and let
\((V_t,W_t)_{t\in[\theta,T]}\) be Chen's perturbed reverse-heat coupling.
Then
\[
  d_{\mathrm{TV}}\bigl(\Law(V_\To),\Law(W_\To)\bigr)
  \lesssim
  \kappa^2\alpha^2 I_\theta
  +
  \sqrt{\kappa^2\alpha^2 I_\theta}\,
  \sqrt{1+\frac{e^{-2\tau}}{1-e^{-2\tau}}(\To-\theta)}.
\]
\end{lemma}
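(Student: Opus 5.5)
The plan is to obtain this global estimate as the special case \(E=\Omega\) of \cref{lem:localized-discrepancy}. The only extra work is to re-run that proof while tracking two dependences that were absorbed into \(C_\tau\) there: the dependence on \(\kappa=(1+e^{-\tau})/(1-e^{-\tau})\), and the \(b_t\)-energy factor \(e^{-2\tau}(1-e^{-2\tau})^{-1}(\To-\theta)\).

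\textbf{Step 1: Duhamel representation.} With \(E=\Omega\), the Duhamel identity \eqref{eq:localized-duhamel} and the bridge decomposition \(\cB_tU_t=\sum_\zeta H_t^\zeta\cB_t^\zeta q_t^\zeta\) carry over unchanged. The coefficient bound
\[
  |\delta_i(\1_{\{S_i>0\}}+r_{t,i}^\zeta\1_{\{S_i\le0\}})|^2\le C\kappa^2\bdel^2\lambda_{t,i}^\zeta
\]
should be obtained with explicit constants. For \(S_i>0\), use \(\lambda_{t,i}^\zeta\ge\kappa^{-1}\), which holds because \(\rho_t\le e^{-\tau}\). For \(S_i\le0\), use \(\lambda_{t,i}^\zeta\le\kappa\). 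This is where \(\kappa^2\) enters. Weighted Cauchy--Schwarz over \((\zeta,i)\) then gives
\[
  |\E[\phi(W_\To)-\phi(V_\To)]|\lesssim \kappa\,\cS_\Omega^{1/2}\,(\Psi_a+\Psi_b)^{1/2}.
\]

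\textbf{Step 2: bound the score energy \(\cS_\Omega\).} By \cref{lem:score-energy},
\[
  \cS_\Omega\le C\,\E[\bdel^2(R_\theta+\alpha+1)].
\]
On \(\{R_\theta\ge\alpha\}\) we have
\[
  \bdel^2(R_\theta+\alpha+1)=\alpha^2\,\frac{R_\theta+\alpha+1}{(R_\theta+1)^2}\le\frac{2\alpha^2}{R_\theta+1},
\]
so \(\cS_\Omega\lesssim\alpha^2 I_\theta\). The constant in Chen's score-energy lemma should be checked to contribute no further \(\kappa\), or at most factors that can be absorbed.

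\textbf{Step 3: energy closure.} I expect this to be the main point where the precise shape of the bound appears.
\begin{itemize}[label={}]
\item For \(\Psi_b\): combine \eqref{eq:bridge-b-control} with \cref{lem:level-one} (the right side \(H-H^2\le1/4\)). The integrand is then at most \(\tfrac14 e^{-2\tau}/(1-e^{-2\tau})\), so after integrating in time
\[
  \Psi_b\le \frac{e^{-2\tau}}{1-e^{-2\tau}}(\To-\theta).
\]
This produces the second summand under the square root. It requires keeping \(\To-\theta\) explicit rather than bounding it by \(1\).
\item For \(\Psi_a\): apply It\^o's formula to \((q_t^{V_T})^2\) via \eqref{eq:bridge-square} and the conditioned generator of \cref{lem:terminal-conditioning}. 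Since \(0\le q\le1\), the boundary terms contribute at most \(1\). The perturbation term is bounded as in Step 1, giving
\[
  \Psi_a\lesssim 1+\kappa\,\cS_\Omega^{1/2}(\Psi_a+\Psi_b)^{1/2}.
\]
\end{itemize}
Setting \(Y=\Psi_a+\Psi_b\), we get \(Y\lesssim 1+c_\tau(\To-\theta)+\kappa\cS^{1/2}Y^{1/2}\), where \(c_\tau=e^{-2\tau}/(1-e^{-2\tau})\), and hence
\[
  Y\lesssim 1+c_\tau(\To-\theta)+\kappa^2\cS_\Omega.
\]
The \(\eps\)-truncation near \(\To\) is handled exactly as in the lemma.

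\textbf{Step 4: combine.} Substituting Step 3 into Step 1 gives
\[
  \kappa\cS^{1/2}Y^{1/2}\lesssim \kappa^2\cS_\Omega+\sqrt{\kappa^2\cS_\Omega}\,\sqrt{1+c_\tau(\To-\theta)}.
\]
With \(\cS_\Omega\lesssim\alpha^2I_\theta\) from Step 2, this is the claimed bound. Taking the supremum over \(\phi\) converts the estimate into total variation.

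The step needing most care is the bookkeeping of \(\kappa\)-powers. They must come out as exactly \(\kappa^2\) in front of \(\alpha^2I_\theta\), and must not also appear inside the \(\sqrt{1+\cdots}\) factor. This requires the \(\lambda\)-weights in \(\Gamma_t\) to match those in \(\Psi_a,\Psi_b\) exactly, so that no extra \(\kappa\) enters when bounding \(\Gamma_t\) by \(\Psi_a+\Psi_b\).
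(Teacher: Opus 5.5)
Your route is the one the paper supports. The paper only quotes this lemma from Chen, but \cref{lem:localized-discrepancy} with $E=\Omega$ is exactly your argument: Duhamel, bridge expansion, weighted Cauchy--Schwarz, It\^o closure for $\Psi_a$, and the level-one bound for $\Psi_b$. There, $\kappa$ and $\To-\theta\le1$ are absorbed into $C_\tau$. Keeping $\To-\theta$ explicit in $\Psi_b$ is the right way to recover the factor under the square root.

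The one thing to fix is the $\kappa$ accounting. You correctly flagged it as delicate, but you attributed the powers to the wrong places.

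\textbf{The coefficient costs only $\kappa$.} Your own case analysis in Step~1 gives, in both cases,
\[
  \bigl|\delta_i\bigl(\1_{\{S_i>0\}}+r_{t,i}^\zeta\1_{\{S_i\le0\}}\bigr)\bigr|^2\le\kappa\,\bdel^2\lambda_{t,i}^\zeta .
\]
When $S_i>0$ this is $\bdel^2\le\kappa\lambda\bdel^2$, and when $S_i\le0$ it is $\bdel^2\lambda^2\le\kappa\bdel^2\lambda$. So the coefficient contributes $\sqrt\kappa$ in the Cauchy--Schwarz step, and $\kappa\cS$ in the closure $Y\lesssim1+c_\tau(\To-\theta)+\kappa\cS$. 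This matches the $\kappa\mathcal S_{\mathrm{Ch}}$ in \cref{app:chen-lemma11}.

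\textbf{The second $\kappa$ comes from the score-energy lemma.} The constant in \cref{lem:score-energy} is not universal. That estimate compares the drift $\frac12\sum_i\psi(1-2S_i)$ of $g_{T-t}(V_t)$, where $\psi(\rho)=\rho\log\rho-\rho+1$, with $\sum_iS_i^2$. On $\rho\le\kappa$ the natural comparison is $\psi(\rho)\ge(1-\rho)^2/(2\kappa)$, which costs a factor $\kappa$. So you should expect $\cS_\Omega\lesssim\kappa\,\alpha^2I_\theta$, not $\alpha^2I_\theta$.

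\textbf{Consequence.} With your bookkeeping (coefficient $\kappa^2$, score energy $\kappa$-free), the actual score-energy constant would leave you with $\kappa^3$. With the corrected bookkeeping you get
\[
  D\lesssim\kappa\cS+\sqrt{\kappa\cS}\,\sqrt{1+c_\tau(\To-\theta)},
  \qquad \kappa\cS\lesssim\kappa^2\alpha^2I_\theta ,
\]
which is exactly the stated bound. None of this matters for the paper itself, which absorbs all $\kappa$-dependence into $C_\tau$.
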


\begin{lemma}[Chen's time-smoothed profile bound {\cite[Lemma~4]{Chen2026}}]
\label{app:chen-lemma4}
For \(f:\cube\to(0,\infty)\) with \(\|f\|_1=1\), and for every \(\ell>2\),
\[
  \int_0^\infty \cA_s((\ell,\ell+1])\,ds
  \lesssim \frac1\ell .
\]
\end{lemma}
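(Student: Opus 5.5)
The plan is an entropy-dissipation argument along the forward heat flow \(u_s=P_sf\), localized to a window of levels around \(\ell\). The bound we would aim for is
\[
  \int_0^\infty \cA_s((\ell,\ell+1])\,ds\lesssim \frac1\ell .
\]
The approach would be to build a bounded functional of \((u_s,g_s)\) that is monotone in \(s\), whose total variation over \(s\in[0,\infty)\) is \(O(1)\), and whose dissipation rate at time \(s\) is at least \(c\,\ell\,\cA_s((\ell,\ell+1])\) minus controlled errors.

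\textbf{Step 1: dissipation identity.} Fix a nondecreasing Lipschitz profile \(\Phi\) with \(0\le\Phi\le1\), \(\Phi=0\) on \((-\infty,\ell-1]\), \(\Phi=1\) on \([\ell+2,\infty)\), and slope of order one on \([\ell-1,\ell+2]\). Set \(F(s)=\int\Phi(g_s)\,d\nu_s=\int\Phi(g_s)u_s\,d\mu\). Since \(\partial_su_s=\cL u_s\) and \(\partial_s g_s=\cL u_s/u_s\), we get
\[
  F'(s)=\int\psi(g_s)\,\cL u_s\,d\mu,\qquad \psi=\Phi+\Phi'.
\]
Reversibility of \(\mu\) then gives
\[
  F'(s)=-\tfrac14\sum_i\int\bigl(\psi(g_s(\flip_i x))-\psi(g_s(x))\bigr)\bigl(u_s(\flip_i x)-u_s(x)\bigr)\,d\mu .
\]
If \(\psi\) is nondecreasing (which can be arranged by choosing \(\Phi\) convex-then-concave with \(\Phi'\) slowly varying), every term is nonnegative, so \(-F'\ge0\). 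Since \(F(0)\le\nu_0(\text{all})=1\), this yields \(\int_0^\infty(-F'(s))\,ds\le1\).

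\textbf{Step 2: small and large times.} For \(s\ge1\), \cref{lem:edge-ratio} bounds the increments \(|g_s(\flip_ix)-g_s(x)|\) by a universal constant. Consequently each edge term is comparable to \(u_s(x)\,(g_s(\flip_ix)-g_s(x))^2\) on the window \(\{g_s\in(\ell-1,\ell+2]\}\). This lets us write
\[
  -F'(s)\gtrsim \int_{\{g_s\in(\ell,\ell+1]\}}\Gamma(g_s)\,d\nu_s,
\]
where \(\Gamma\) is the discrete carré du champ. For \(s\in(0,1)\) the increments are not uniformly bounded, but a single edge with a large jump already contributes a large amount to \(-F'\), so a truncated version of the same comparison should still hold.

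\textbf{Step 3: energy lower bound at height \(\ell\) (main obstacle).} The key inequality to prove is
\[
  \int_{\{g_s\in(\ell,\ell+1]\}}\Gamma(g_s)\,d\nu_s\gtrsim \ell\,\cA_s((\ell,\ell+1]) - (\text{terms summable in } s).
\]
The heuristic is that the set \(A=\{g_s>\ell\}\) has \(\mu(A)\le e^{-\ell}\) by Markov's inequality, and sets of measure \(e^{-\ell}\) have normalized boundary of order \(\sqrt{\ell}\) in the Talagrand or level-one sense. Squared gradients should therefore be of order \(\ell\) on the level layer.

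I would first try to obtain this from \cref{lem:level-one} applied to the indicator of \(A\) at the biased point \(z=e^{-s}x\), in the form of Chang's lemma \(\sum_i\widehat{1_A}(\{i\})^2\lesssim\mu(A)^2\log(1/\mu(A))\). This would be used to compare the \(\nu_s\)-average of \(\Gamma(g_s)\) on the layer with \(\ell\) times the layer mass. If a pointwise comparison fails, the fallback is to average over \(\ell'\in[\ell,2\ell]\) with weight profiles \(\Phi_{\ell'}\): only the integrated statement in \(s\) is needed, so the errors only have to be summable after integrating over \(s\).

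\textbf{Step 4: conclusion.} Integrating the resulting inequality \(-F'(s)\gtrsim \ell\,\cA_s((\ell,\ell+1])\) over \(s\in[0,\infty)\), and using \(\int_0^\infty(-F')\,ds\le1\), gives the claimed \(\int_0^\infty\cA_s((\ell,\ell+1])\,ds\lesssim 1/\ell\). The restriction \(\ell>2\) leaves room for the profile window \([\ell-1,\ell+2]\) to sit at positive levels and for absorbing constants.
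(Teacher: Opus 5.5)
The paper gives no proof of this statement; it quotes Chen's Lemma~4. Judged on its own, your proposal has a genuine gap. Step~3 is the entire content of the lemma, and it is left as a heuristic. The tools you name cannot close it: the level-one inequality (\cref{lem:level-one}) and Chang's lemma are \emph{upper} bounds on (biased) level-one weight. They cannot by themselves produce the \emph{lower} bound $\int_{\{g_s\in(\ell,\ell+1]\}}\Gamma(g_s)\,d\nu_s\gtrsim \ell\,\cA_s((\ell,\ell+1])$. What is needed is a lower bound on a localized Dirichlet energy in terms of the layer mass, not a pointwise bound on $\Gamma(g_s)$.

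Steps~1--2 also need repair. If $\psi=\Phi+\Phi'$ is nondecreasing, then $(e^{y}\Phi'(y))'=e^{y}\psi'(y)\ge0$, so once $\Phi'>0$ it never returns to $0$. Hence no such $\Phi$ is constant on $[\ell+2,\infty)$. Instead, start from a nondecreasing $\psi$ with $0\le\psi\le1$ and $\psi=0$ on $(-\infty,\ell-1]$. Set $F(s)=\int\Xi(u_s)\,d\mu$ with $\Xi(0)=0$ and $\Xi'(v)=\psi(\log v)$. Then $\Xi$ is convex with $0\le\Xi(v)\le v$; in fact $\Xi(v)=v\Phi(\log v)$ for the bounded solution of $\Phi+\Phi'=\psi$. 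So $F$ is nonincreasing, $F(0)\le1$, and your dissipation formula holds verbatim. In Step~2, with $\psi$ bounded, an edge with a huge jump contributes at most $u_s(x)$ to $-F'$, not ``a large amount''. Small times would therefore again require counting such edges, which is the same missing lower bound.

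The missing idea is the log-Sobolev inequality applied to a function with small support. This is the functional form of your isoperimetric heuristic, and it works for all $s$ at once. Take $\psi(y)=\min\{1,(y-\ell+1)_+/2\}$. Let $h_s=H(u_s)$ with $H(v)=\min\{(\sqrt v-e^{(\ell-1)/2})_+,\ e^{(\ell+1)/2}-e^{(\ell-1)/2}\}$, and write $\Delta_ih(x)=h(\flip_ix)-h(x)$. On $[e^{\ell-1},e^{\ell+1}]$ one has $H'(v)^2=\frac1{4v}=\frac12\Xi''(v)$, and both derivatives vanish outside this window. Cauchy--Schwarz then gives $(H(a)-H(b))^2\le\frac12(a-b)(\Xi'(a)-\Xi'(b))$ for $a>b$, edge by edge. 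Combining this with Gross's inequality $\mathrm{Ent}_\mu(h^2)\le\frac12\sum_i\E(\Delta_ih)^2$ yields
\[
  -F'(s)\ \ge\ \tfrac12\sum_i\E(\Delta_ih_s)^2\ \ge\ \mathrm{Ent}_\mu(h_s^2)\ \ge\ \E[h_s^2]\,\log\frac{1}{\mu\{h_s>0\}}\ \ge\ (1-e^{-1/2})^2(\ell-1)\,\cA_s((\ell,\ell+1]).
\]
The third inequality is Jensen on the support of $h_s$. The last uses Markov's bound $\mu\{u_s>e^{\ell-1}\}\le e^{1-\ell}$ and $h_s\ge(1-e^{-1/2})\sqrt{u_s}$ on the layer. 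Integrating in $s$ gives $\int_0^\infty\cA_s((\ell,\ell+1])\,ds\le(1-e^{-1/2})^{-2}(\ell-1)^{-1}\lesssim1/\ell$ for $\ell>2$. This needs no time splitting, no edge-ratio input and no error terms. The factor $\ell$ comes from $\log(1/\mu(\mathrm{supp}\,h_s))$, not from any pointwise gradient bound.
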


\begin{lemma}[Chen's edge-ratio bound {\cite[Lemma~5]{Chen2026}}]
\label{app:chen-lemma5}
Let \(f:\cube\to\R_+\) be nonzero and let the same letter denote its
multilinear extension.  For every \(x\in(-1,1)^n\),
\[
  0<\|f\|_1\prod_{j=1}^n(1-\abs{x_j})
  \le f(x)
  \le
  \|f\|_1\prod_{j=1}^n(1+\abs{x_j}).
\]
For every \(i\in[n]\),
\[
  \frac{1-\abs{x_i}}{1+\abs{x_i}}
  \le
  \frac{f(\flip_i x)}{f(x)}
  \le
  \frac{1+\abs{x_i}}{1-\abs{x_i}}.
\]
If \(f\) is \(\{0,1\}\)-valued, then its multilinear extension takes values
in \([0,1]\) on \((-1,1)^n\).
\end{lemma}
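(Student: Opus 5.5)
The plan is to argue directly from the probabilistic form of the multilinear extension, $f(x)=\E_{\mu_x}f(X)=\sum_{y\in\cube}f(y)\prod_{j=1}^n\frac{1+x_jy_j}{2}$, valid for $x\in(-1,1)^n$. All three assertions then reduce to elementary bounds on these product weights together with affinity of $f$ in each coordinate.

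\emph{Two-sided bound.} For $|x_j|<1$ and $y_j=\pm1$ we have $1-|x_j|\le 1+x_jy_j\le 1+|x_j|$, so every weight $\prod_j(1+x_jy_j)/2$ lies between $2^{-n}\prod_j(1-|x_j|)$ and $2^{-n}\prod_j(1+|x_j|)$. Multiplying by $f(y)\ge0$ and summing over $y$, using $\sum_y 2^{-n}f(y)=\|f\|_1$, gives the stated sandwich. Positivity of the lower bound follows from $f\not\equiv0$ and $|x_j|<1$.

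\emph{Edge ratio.} Fix $i$ and use that $f$ is affine in $x_i$. Let $f^{+}$ and $f^{-}$ denote the values of the extension with the $i$-th coordinate set to $+1$ and $-1$, the others kept. Each is an expectation of $f\ge0$ under a product measure, hence nonnegative, and they are not both zero, since $f(x)>0$ by the first part. With $p=(1+x_i)/2\in(0,1)$ we have
\[
  f(x)=pf^{+}+(1-p)f^{-},\qquad f(\flip_i x)=(1-p)f^{+}+pf^{-}.
\]
The ratio $\frac{(1-p)u+pv}{pu+(1-p)v}$ with $u,v\ge0$ not both zero is a mediant of $\frac{1-p}{p}$ and $\frac{p}{1-p}$ (or equals one of them when $u$ or $v$ vanishes). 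Hence it lies between $\min$ and $\max$ of $\frac{1-x_i}{1+x_i}$ and $\frac{1+x_i}{1-x_i}$, which are exactly $\frac{1-|x_i|}{1+|x_i|}$ and $\frac{1+|x_i|}{1-|x_i|}$. To make the mediant step explicit, one can note that the map $u/(u+v)\mapsto$ ratio is a monotone Möbius function on $[0,1]$, so its extreme values are attained at $u=0$ or $v=0$.

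\emph{$\{0,1\}$-valued case.} If $f$ takes values in $\{0,1\}$, then $f(x)=\mu_x\{f=1\}\in[0,1]$, since $\mu_x$ is a probability measure for $x\in(-1,1)^n$.

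None of these steps is a real obstacle. The only point requiring care is the degenerate case in the edge-ratio step where one of $f^{\pm}$ vanishes, and the monotonicity argument above handles it uniformly.
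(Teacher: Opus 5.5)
Your proof is correct and complete. The sandwich follows from the termwise bounds $1-|x_j|\le 1+x_jy_j\le 1+|x_j|$ together with $2^{-n}\sum_y f(y)=\|f\|_1$. The decomposition $f(x)=pf^{+}+(1-p)f^{-}$, $f(\sigma_i x)=(1-p)f^{+}+pf^{-}$, with $f^{\pm}\ge 0$ not both zero, is right. The mediant (or M\"obius-monotonicity) step correctly covers the degenerate cases. The $\{0,1\}$-valued claim is immediate from $f(x)=\mu_x\{f=1\}$.

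The paper gives no proof of its own here. It quotes the statement from Chen's Lemma~5 and only applies it at $z=e^{-t}x$. So there is no in-paper argument to compare against, and your write-up would serve as a self-contained replacement.

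There is a slightly shorter route to the edge ratio that stays inside the sum representation you already used for the first part. Write $w_x(y)=\prod_j\frac{1+x_jy_j}{2}$. Then $w_{\sigma_i x}(y)=w_x(y)\cdot\frac{1-x_iy_i}{1+x_iy_i}$, and for $y_i=\pm1$ this factor lies between $\frac{1-|x_i|}{1+|x_i|}$ and $\frac{1+|x_i|}{1-|x_i|}$. Multiplying by $f(y)\ge 0$ and summing over $y$ gives the bound at once, with no need to isolate $f^{\pm}$ or treat degenerate cases.

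Your version does show something extra: the extreme ratios occur exactly when one of $f^{\pm}$ vanishes, i.e.\ when $f$ is supported on a half-cube $\{y_i=\pm1\}$. So the bound is sharp.
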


\begin{lemma}[Chen's existence and perturbation bound {\cite[Lemma~6]{Chen2026}}]
\label{app:chen-lemma6}
The coupled jump equation defining \((V_t,W_t)_{t\in[\theta,T]}\) has a
unique strong solution.  Moreover, for \(t\le\To\) and \(i\in[n]\),
\[
  0\le \delta_i(t,V_{t-})<\kappa,
  \qquad
  \kappa=\frac{1+e^{-\tau}}{1-e^{-\tau}}.
\]
\end{lemma}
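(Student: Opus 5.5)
The argument has two separate parts: the bound on the perturbation size, which is pointwise algebra built on the edge-ratio estimate, and well-posedness, which holds because the coupled equation lives on a finite state space with bounded, nonnegative rates.

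\emph{Perturbation bound.} First I will record the rate control. For \(t\le\To\) we have \(T-t\ge\tau\). The edge-ratio part of \cref{app:chen-lemma5}, applied at \(z=e^{-(T-t)}x\) (equivalently \cref{lem:edge-ratio}), gives
\[
  1-2S_i(t,x)=\frac{P_{T-t}f(\flip_i x)}{P_{T-t}f(x)}\in\Bigl[\kappa^{-1},\kappa\Bigr],
  \qquad \kappa=\frac{1+e^{-\tau}}{1-e^{-\tau}},
\]
because \(s\mapsto(1+e^{-s})/(1-e^{-s})\) is decreasing. Next I note that
\[
  0\le\bdel=\frac{\alpha\1_{\{R_\theta\ge\alpha\}}}{R_\theta+1}\le\frac{\alpha}{\alpha+1}<1 .
\]
I then split on the sign of \(S_i\).
\begin{itemize}
  \item If \(S_i>0\), then \(\delta_i=\bdel\in[0,1)\), and \(1<\kappa\).
  \item If \(S_i\le0\), then \(-2\bdel S_i\ge0\), so the denominator satisfies \(1-2\bdel S_i\ge1\). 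Hence \(0\le\delta_i=\bdel(1-2S_i)/(1-2\bdel S_i)\le\bdel(1-2S_i)<\kappa\).
\end{itemize}
Evaluating at \(x=V_{t-}\) gives \(0\le\delta_i(t,V_{t-})<\kappa\).

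\emph{Well-posedness.} I will check that every rate in the coupled jump equation is nonnegative and bounded by \(C_\tau\) on \([\theta,\To]\).
\begin{itemize}
  \item The synchronized flip rates are \(\tfrac12-S_i=\tfrac12(1-2S_i)\in[\tfrac1{2\kappa},\tfrac\kappa2]\).
  \item The perturbative pieces are \(\delta_iS_i\) with \(|S_i|\le\kappa\) and \(\delta_i<\kappa\). The sign-dependent choice of \(\delta_i\) is exactly what keeps the combined rates nonnegative.
  \item When \(S_i>0\), \(W\) gets an extra \(y\)-only flip of rate \(\bdel S_i\ge0\), and the remaining synchronized rate is \(\tfrac12-S_i>0\).
  \item When \(S_i\le0\), the split of the \(x\)-flip mass into a synchronized part and a \(V\)-only part has nonnegative weights, because \(\bdel\le1\).
\end{itemize}

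I then construct the solution pathwise from the Poisson clocks, one jump at a time. The state space \(\cube\times\cube\) is finite and the total rate is at most \(C_\tau n\), so almost surely only finitely many clock rings occur in \([\theta,T]\). Between rings the state is constant. At each ring, the thinning marks together with the predictable, left-limit values \(S_i(t,V_{t-})\), \(\delta_i(t,V_{t-})\) and \(\1_{\{t\le\StopT\}}\) determine the jump uniquely. After \(\To\) the process is continued as the unperturbed chain, where no perturbation is present. Induction over the finitely many rings gives existence and pathwise uniqueness, hence a unique strong solution.

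\emph{Main obstacle.} The step needing the most care is the stopping time \(\StopT\), which enters the equation through the predictable indicator \(\1_{\{t\le\StopT\}}\). The pathwise recursion handles it as long as the stopping decision at each ring depends only on the pre-jump state. This holds because the indicator is left-continuous and adapted, so it is fixed at \(t-\).
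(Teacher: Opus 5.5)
The paper gives no proof of this statement. It is listed in the appendix as a quotation of Chen's Lemma~6 and used as a black box, so there is no in-paper argument to compare against. Your reconstruction is correct and is the natural one, apart from one small repair in the well-posedness step noted below.

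The perturbation bound follows, as you say, from three facts: $1-2S_i(t,x)\in[\kappa^{-1},\kappa]$ for $t\le\To$, $0\le\bdel\le\alpha/(\alpha+1)<1$, and the sign split. The same computation even gives $0\le\delta_i<1$. Indeed, for $S_i\le0$ the map $\bdel\mapsto\bdel(1-2S_i)/(1-2\bdel S_i)$ is increasing and equals $1$ at $\bdel=1$.

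For the rate check it is cleaner to read the weights off $\bar{\cL}_t^\delta$ directly. When $S_i\le0$, the synchronized weight is $\tfrac12-S_i+\delta_iS_i=\frac{1-2S_i}{2(1-2\bdel S_i)}>0$, and the $V$-only weight is $-\delta_iS_i\ge0$. Your phrase ``because $\bdel\le1$'' is correct, but only via the unstated step $\delta_i\le1$.

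The repair concerns your bound ``total rate at most $C_\tau n$'', which is valid only on $[\theta,\To]$. On $(\To,T]$ the edge-ratio constant $(1+e^{-(T-t)})/(1-e^{-(T-t)})$ blows up as $t\uparrow T$. So it cannot show that only finitely many clock rings are accepted there. Use instead the strict positivity of $f$ on the finite cube, which gives $\tfrac12-S_i(t,x)=\tfrac12P_{T-t}f(\flip_i x)/P_{T-t}f(x)\le\tfrac12\max f/\min f$ for all $t\in[\theta,T]$.

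With this change, your ring-by-ring recursion gives existence and pathwise uniqueness as claimed. That includes your observation that whether a ring at time $s$ satisfies $s\le\StopT$ is decided by the path on $[\theta,s)$.
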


\begin{lemma}[Chen's predictable generator representation {\cite[Eq.~(19)--(20)]{Chen2026}}]
\label{app:chen-generator}
Conditionally on \(\mathcal F_\theta\), the coupled process is a
finite-state pure-jump process with predictable generator
\[
  \bar{\cL}_t^\delta
  =
  \bar{\cL}_t^0+\1_{\{t\le\StopT\}}\cB_t,
\]
where, writing \(S_i=S_i(t,x)\) and \(\delta_i=\delta_i(t,x)\),
\begin{align*}
  \bar{\cL}_t^0h(x,y)
  &=
  \frac12\sum_i(1-2S_i)\Delta_i^{xy}h(x,y),\\
  \cB_t h(x,y)
  &=
  \sum_i\1_{\{S_i>0\}}\delta_iS_i\Delta_i^yh(x,y)
  +
  \sum_i\1_{\{S_i\le0\}}\delta_iS_i
  \Delta_i^yh(\flip_i x,y).
\end{align*}
This is Chen's representation \cite[Eq.~(19)--(20)]{Chen2026}, translated to
the notation of \cref{eq:joint-generator}.
\end{lemma}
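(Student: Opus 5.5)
The plan is to read the generator directly off the Poisson-clock construction of the coupling, in the thinning form of \cite[Section~3.1]{Chen2026}. For each coordinate \(i\), take independent Poisson random measures \(N_i(dt,du)\) on \([\theta,\To]\times[0,\infty)\) with intensity \(dt\,du\), independent of \(\mathcal F_\theta\). A mark \((t,u)\) of \(N_i\) triggers a jump according to which interval \(u\) falls in. The interval endpoints are predictable functions of \((t,V_{t-})\) and of the indicator \(\1_{\{t\le\StopT\}}\).

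\emph{Unperturbed part.} When \(t>\StopT\), or when \(\bdel=0\), the interval \(u\le \frac12-S_i(t,V_{t-})\) flips \(x_i\) and \(y_i\) together. This is the synchronized coupling.

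\emph{Case \(S_i>0\), \(t\le\StopT\).} The synchronized interval is kept. An additional disjoint interval of length \(\delta_iS_i\ge0\) flips only \(y_i\). This interval exists because \(\delta_i\ge0\) by \cref{app:chen-lemma6}.

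\emph{Case \(S_i\le0\), \(t\le\StopT\).} The synchronized interval is shortened to length \(\frac12-S_i+\delta_iS_i\). The removed part, of length \(-\delta_iS_i\ge0\), flips only \(x_i\). I first check that the shortened length is nonnegative. Substituting \(\delta_i=\bdel(1-2S_i)/(1-2\bdel S_i)\) gives
\[
\tfrac12-S_i+\delta_iS_i=\frac{1-2S_i}{2(1-2\bdel S_i)}>0 .
\]
So the construction is a genuine thinning. Moreover, the total \(x_i\)-flip rate stays \(\frac12-S_i\) in every case, which is the reverse-heat rate.

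Next I apply the change-of-variables formula for finite-state pure-jump processes driven by Poisson random measures. For bounded \(h\),
\[
h(V_t,W_t)-h(V_\theta,W_\theta)-\int_\theta^t\sum_i\int \bigl[h(\text{post-jump})-h(V_{s-},W_{s-})\bigr]\,du\,ds
\]
is an \((\mathcal F_t)\)-martingale conditionally on \(\mathcal F_\theta\). Integrability holds because for fixed \(n\) all rates are bounded by \(C_\tau\), using the edge-ratio bound \cref{lem:edge-ratio} for \(t\le\To\) and \(\delta_i<\kappa\). The intensities are predictable for two reasons. They depend on \(V_{s-}\) and on the \(\mathcal F_\theta\)-measurable \(\bdel\). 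The factor \(\1_{\{s\le\StopT\}}\) is left-continuous and adapted.

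It then remains to integrate out \(u\) and regroup the terms. For \(S_i>0\) the contributions are
\[
(\tfrac12-S_i)\Delta_i^{xy}h+\delta_iS_i\Delta_i^yh .
\]
For \(S_i\le0\) they are
\[
(\tfrac12-S_i+\delta_iS_i)\Delta_i^{xy}h-\delta_iS_i\bigl[h(\flip_ix,y)-h(x,y)\bigr].
\]
The identity
\[
\Delta_i^{xy}h(x,y)-\bigl[h(\flip_i x,y)-h(x,y)\bigr]=\Delta_i^yh(\flip_ix,y)
\]
turns the \(S_i\le0\) contribution into \((\frac12-S_i)\Delta_i^{xy}h+\delta_iS_i\Delta_i^yh(\flip_ix,y)\). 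Summing over \(i\) gives exactly \(\bar{\cL}_t^0+\1_{\{t\le\StopT\}}\cB_t\).

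The main obstacle is bookkeeping rather than analysis. First, the signed operator \(\cB_t\) must be matched with a genuinely nonnegative thinning; this is the positivity check on the shortened synchronized interval above. Second, the stopping indicator must enter predictably. This follows from the definition of \(\StopT\) as a hitting time of a càdlàg adapted process, truncated at \(\To\).
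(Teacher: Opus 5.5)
Your derivation is correct, but it takes a different route from the paper. The paper gives no argument for this statement: it imports Chen's Eq.~(19)--(20) through the notation table and relies on Chen's Lemma~6 for well-posedness. You instead rebuild the generator from an explicit thinning of common Poisson clocks and compute the compensator.

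Your three cases collapse to a single rule. On a mark \((t,u)\) of \(N_i\), \(V\) flips iff \(u\le\frac12-S_i\), and \(W\) flips iff \(u\le\frac12-(1-\delta_i\1_{\{t\le\StopT\}})S_i\). This is the natural reading of ``driven by the same Poisson clocks'' and makes the identification with Chen's jump equation transparent. Your argument speaks about Chen's process only insofar as that identification holds, so you should state it explicitly.

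What your route buys is a self-contained check of two facts the paper takes from Chen. First, the signed operator \(\cB_t\) corresponds to genuinely nonnegative rates; this is your identity \(\frac12-S_i+\delta_iS_i=\frac{1-2S_i}{2(1-2\bdel S_i)}>0\). Second, the total \(x_i\)-flip rate is always \(\frac12-S_i\). That second fact is exactly why \(\cB_t\) annihilates tests depending only on \(x\) in \cref{lem:joint-filtration}.

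What the citation buys is brevity and fidelity to Chen's actual construction, including existence for a self-referential definition: the rates involve \(\1_{\{t\le\StopT\}}\), and \(\StopT\) depends on \(W\). In your setting existence is easy, since the relevant marks lie in \([\theta,\To]\times[0,C_\tau]\) and are almost surely finitely many, so the path can be built mark by mark. You should still say so rather than leave it implicit.
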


\begin{lemma}[Chen's Boolean heat bridge {\cite[Lemma~9]{Chen2026}}]
\label{app:chen-lemma9}
Fix \(0\le t\le\To\) and \(x,y,\zeta\in\cube\).  Conditional on
\((V_t,V_T)=(x,\zeta)\), the coordinates of \(V_\To\) are independent after
the standard Boolean bridge conditioning, and
\[
  m_t(x,y,\zeta)
  :=
  \E[V_\To\odot(x\odot y)\mid V_t=x,\ V_T=\zeta]
\]
has coordinates
\[
  m_t^{[i]}(x,y,\zeta)=a_ty_i+b_t x_iy_i\zeta_i,
\]
where
\[
  a_t=\frac{\sinh(T-\To)}{\sinh(T-t)},\qquad
  b_t=\frac{\sinh(\To-t)}{\sinh(T-t)}.
\]
For every multilinear \(\phi\),
\[
  \E[\phi(V_\To\odot(x\odot y))\mid V_t=x,\ V_T=\zeta]
  =
  \phi(m_t(x,y,\zeta)).
\]
If \(q_t^\zeta(x,y)=\phi(m_t(x,y,\zeta))\), then
\begin{align*}
  \Delta_i^yq_t^\zeta(x,y)
  &=-2(a_ty_i+b_tx_iy_i\zeta_i)\partial_i\phi(m_t(x,y,\zeta)),\\
  \Delta_i^yq_t^\zeta(\flip_i x,y)
  &=-2(a_ty_i-b_tx_iy_i\zeta_i)\partial_i\phi(m_t(x,y,\zeta)),\\
  \Delta_i^{xy}q_t^\zeta(x,y)
  &=-2a_ty_i\partial_i\phi(m_t(x,y,\zeta)).
\end{align*}
\end{lemma}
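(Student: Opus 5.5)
The plan is to compute the bridge law by hand, exploiting the fact that the Boolean heat kernel is a product over coordinates.

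\emph{Step 1: the bridge has independent coordinates.} Run time forwards: set \(X_s=V_{T-s}\), which is the forward heat chain started from \(f\,d\mu\). Thus \(X_0=V_T=\zeta\), \(X_{T-\To}=V_\To=w\) and \(X_{T-t}=V_t=x\). By the Markov property of the forward chain, the joint weight of \((\zeta,w,x)\) is proportional to
\[
f(\zeta)\,p_{T-\To}(\zeta,w)\,p_{\To-t}(w,x),
\qquad
p_s(a,b)=2^{-n}\prod_j\bigl(1+e^{-s}a_jb_j\bigr).
\]
Fix \(x\) and \(\zeta\) and view this as a function of \(w\). The factor \(f(\zeta)\) is constant and cancels, and what remains is a product over \(j\). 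Hence, conditionally on \(V_t=x\) and \(V_T=\zeta\), the coordinates of \(V_\To\) are independent.

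\emph{Step 2: the bridge mean.} Write \(\beta=e^{-\tau}=e^{-(T-\To)}\) and \(\gamma_t=e^{-(\To-t)}\). In coordinate \(i\) the conditional weight of \(w_i\) is proportional to \((1+\beta\zeta_iw_i)(1+\gamma_tw_ix_i)\). Summing over \(w_i=\pm1\) gives
\[
\E[w_i\mid x,\zeta]=\frac{\beta\zeta_i+\gamma_tx_i}{1+\rho_tx_i\zeta_i}.
\]
Multiplying by \(x_iy_i\) and using \(x_i^2=1\) yields
\[
m_t^{[i]}=y_i\,\frac{\gamma_t+\beta x_i\zeta_i}{1+\rho_tx_i\zeta_i},
\]
which is the "equivalently" form in \cref{lem:bridge-algebra}.

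The variable \(s=x_i\zeta_i\) takes only the values \(\pm1\). So this expression equals \(a_ty_i+b_t\omega_i\) as soon as the two values match at \(s=\pm1\), namely
\[
a_t\pm b_t=\frac{\gamma_t\pm\beta}{1\pm\rho_t}.
\]
Solving gives
\[
a_t=\frac{\gamma_t(1-\beta^2)}{1-\rho_t^2},\qquad
b_t=\frac{\beta(1-\gamma_t^2)}{1-\rho_t^2}.
\]
Multiplying numerator and denominator by \(e^{T-t}/2\) converts these into \(\sinh(T-\To)/\sinh(T-t)\) and \(\sinh(\To-t)/\sinh(T-t)\), using \(\rho_t=\gamma_t\beta\). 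The multilinear-expectation identity \(\E\phi(Y)=\phi(\E Y)\) for independent \(\pm1\) coordinates, recalled in the preliminaries, then gives
\[
\E[\phi(V_\To\odot(x\odot y))\mid V_t=x,\,V_T=\zeta]=\phi(m_t(x,y,\zeta)).
\]

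\emph{Step 3: the finite differences.} Each of the three operations changes only the \(i\)-th coordinate of \(m_t\). Since \(\phi\) is affine in that coordinate,
\[
\phi(\ldots,-c,\ldots)-\phi(\ldots,c,\ldots)=-2c\,\partial_i\phi,
\]
and \(\partial_i\phi\) does not depend on the \(i\)-th coordinate. It is therefore the same at \(m_t(x,y,\zeta)\) as at any of the modified points.
\begin{itemize}
\item \(\Delta_i^y\) at \((x,y)\): flipping \(y_i\) flips both \(a_ty_i\) and \(\omega_i\), so \(m^{[i]}\) is negated. This gives \(-2(a_ty_i+b_t\omega_i)\partial_i\phi(m_t)\).
\item \(\Delta_i^y\) at \((\flip_i x,y)\): the base point has \(i\)-th coordinate \(a_ty_i-b_t\omega_i\), and the \(y\)-flip negates it. This gives \(-2(a_ty_i-b_t\omega_i)\partial_i\phi(m_t)\).
\item \(\Delta_i^{xy}\): \(\omega_i\) is unchanged while \(a_ty_i\) changes sign. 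This gives \(-2a_ty_i\partial_i\phi(m_t)\).
\end{itemize}

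The main obstacle is bookkeeping rather than ideas. The delicate points are the time direction (which kernel carries \(\beta\) and which carries \(\gamma_t\)) and the sign conventions in \(\Delta_i^y h(\flip_i x,y)\). I would first check every identity in the one-dimensional case \(n=1\) before writing the general product computation.
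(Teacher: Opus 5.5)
Your proposal is correct. The product form of the forward heat kernel gives conditional independence of the coordinates of $V_\To$. The two-point interpolation in $x_i\zeta_i\in\{\pm1\}$ reproduces exactly the expressions $a_t=\gamma_t(1-\beta^2)/(1-\rho_t^2)$, $b_t=\beta(1-\gamma_t^2)/(1-\rho_t^2)$ and the equivalent form of $m_t^{[i]}$ displayed in \cref{lem:bridge-algebra}. The three finite differences then follow from multilinearity as you describe.

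The paper itself gives no proof of this statement; it imports it from Chen's Lemma~9. Your Bayes/heat-kernel computation is the natural self-contained argument behind that citation, not a different route.
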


\begin{lemma}[Chen's Doob \(h\)-transform {\cite[Lemma~10]{Chen2026}}]
\label{app:chen-lemma10}
For \(\zeta\in\cube\), define
\[
  H_t^\zeta(x)=\Pp(V_T=\zeta\mid V_t=x),\qquad
  r_{t,i}^\zeta(x)=\frac{H_t^\zeta(\flip_i x)}{H_t^\zeta(x)},\qquad
  \lambda_{t,i}^\zeta(x)=
  \frac{1-\rho_t x_i\zeta_i}{1+\rho_t x_i\zeta_i}.
\]
Under the conditioned law \(\Pp^\zeta=\Pp(\,\cdot\,\mid V_T=\zeta)\), the
Doob-transformed predictable generator has the form
\[
  \cG_t^\zeta
  =
  \cL_t^{0,\zeta}+\1_{\{t\le\StopT\}}\cB_t^\zeta,
\]
where, with \(S_i=S_i(t,x)\), \(\delta_i=\delta_i(t,x)\),
\(r_i^\zeta=r_{t,i}^\zeta(x)\), and
\(\lambda_i^\zeta=\lambda_{t,i}^\zeta(x)\),
\begin{align*}
  \cL_t^{0,\zeta}h(x,y)
  &=
  \sum_i r_i^\zeta\left(\frac12-S_i\right)\Delta_i^{xy}h(x,y),\\
  \cB_t^\zeta h(x,y)
  &=
  \sum_i\1_{\{S_i>0\}}\delta_iS_i\Delta_i^yh(x,y)
  +
  \sum_i\1_{\{S_i\le0\}}r_i^\zeta\delta_iS_i
  \Delta_i^yh(\flip_i x,y).
\end{align*}
Moreover,
\[
  r_i^\zeta\left(\frac12-S_i\right)=\frac12\lambda_i^\zeta .
\]
\end{lemma}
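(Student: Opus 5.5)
The plan is to realize the conditioning on \(\{V_T=\zeta\}\) as a Doob \(h\)-transform by the space-time harmonic function \(H_t^\zeta(V_t)\). Then I would read off the transformed jump rates term by term from the joint predictable generator \eqref{eq:joint-generator}. Finally I would verify the rate identity \(r_i^\zeta(\frac12-S_i)=\frac12\lambda_i^\zeta\) by the explicit Bayes formula for \(H_t^\zeta\).

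\emph{Step 1 (the density process).} By \cref{lem:joint-filtration} applied with \(\Phi=\1_{\{\zeta\}}\), the process \(M_t=\Pp(V_T=\zeta\mid\mathcal F_t)=H_t^\zeta(V_t)\) is a bounded, strictly positive \((\mathcal F_t)\)-martingale. It depends only on the \(V\)-coordinate, even though the joint dynamics are path-dependent through \(\1_{\{t\le\StopT\}}\). On \(\mathcal F_t\) one has \(d\Pp^\zeta/d\Pp=M_t/\Pp(V_T=\zeta)\). Equivalently, \(u(t,x)=H_t^\zeta(x)\) solves \((\partial_t+\widetilde{\cL}_t)u=0\). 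Since \(\cB_t\) annihilates functions of \(x\) alone, it also solves \((\partial_t+\bar{\cL}^\delta_t)u=0\) for the full joint generator.

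\emph{Step 2 (transformed generator).} For a finite-state pure-jump process with predictable jump kernel, the change of measure by \(M\) (Girsanov for marked point processes, or directly the formula \(\cG^\zeta h=H^{-1}[\bar{\cL}^\delta(Hh)-h\,\bar{\cL}^\delta H]\) combined with Step 1) multiplies the rate of each jump \((x,y)\to(x',y')\) by \(H_t^\zeta(x')/H_t^\zeta(x)\). I would write every term of \eqref{eq:joint-generator} as a signed combination of elementary jumps and treat them as follows.
\begin{itemize}
\item The synchronized jumps \((x,y)\to(\flip_i x,\flip_i y)\) carry rate \(\frac12-S_i\) and acquire the factor \(r_i^\zeta\).
\item In the \(S_i>0\) part, \(\Delta_i^yh(x,y)\) is a \(y\)-only jump, so its rate is unchanged.
\item In the \(S_i\le0\) part, \(\Delta_i^yh(\flip_ix,y)=h(\flip_ix,\flip_iy)-h(\flip_ix,y)\) is a difference of two jumps that both move \(x\) to \(\flip_i x\). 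The term \(h(x,y)\) cancels, so the whole term acquires the factor \(r_i^\zeta\).
\end{itemize}
The indicator \(\1_{\{t\le\StopT\}}\) is predictable and is simply carried along. This yields exactly the displayed \(\cL_t^{0,\zeta}\) and \(\cB_t^\zeta\).

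\emph{Step 3 (the rate identity).} From Bayes' formula for the reverse chain, \(H_t^\zeta(x)=K_t^\zeta(x)f(\zeta)/P_{T-t}f(x)\) with \(K_t^\zeta(x)=2^{-n}\prod_j(1+\rho_tx_j\zeta_j)\). Hence
\[
  r_i^\zeta\,(1-2S_i)=\frac{K_t^\zeta(\flip_ix)}{K_t^\zeta(x)}\cdot\frac{P_{T-t}f(x)}{P_{T-t}f(\flip_ix)}\cdot\frac{P_{T-t}f(\flip_ix)}{P_{T-t}f(x)}=\frac{1-\rho_tx_i\zeta_i}{1+\rho_tx_i\zeta_i}=\lambda_i^\zeta.
\]
Here the middle factor is \(1/(1-2S_i)\) by the definition of the score. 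Halving both sides gives \(r_i^\zeta(\frac12-S_i)=\frac12\lambda_i^\zeta\).

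\emph{Main obstacle.} I expect the main difficulty to be Step 2 in the presence of the signed, non-generator perturbation \(\cB_t\) and the stopping indicator. The rate-reweighting rule must be applied to the genuine jump kernel of the full operator \(\bar{\cL}_t^\delta\), not to \(\cB_t\) in isolation. My approach is to expand \(\bar{\cL}_t^\delta\) into nonnegative elementary jump rates, using the bound \(0\le\delta_i<\kappa\) from \cref{app:chen-lemma6} and the fact that the rates \(1-2S_i\) are comparable to \(1\). I would then apply the reweighting to each jump and regroup the terms. Since every jump that changes \(x\) to \(\flip_ix\) receives the same factor, the regrouping recovers the signed form of \(\cB_t^\zeta\).
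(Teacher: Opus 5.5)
Your proposal is correct and follows essentially the same route as the paper's in-text proof of this statement (\cref{lem:terminal-conditioning}): you identify \(\Pp(V_T=\zeta\mid\mathcal F_t)=H_t^\zeta(V_t)\) via \cref{lem:joint-filtration}, reweight every jump that moves \(x\) to \(\flip_i x\) by \(r_i^\zeta\) while leaving \(y\)-only jumps unchanged, and read off \(r_i^\zeta(1-2S_i)=\lambda_i^\zeta\) from the Bayes formula for \(H_t^\zeta\). Your anticipated obstacle is milder than you fear, since the transform \(h\mapsto (H_t^\zeta)^{-1}[\bar{\cL}_t^\delta(H_t^\zeta h)-h\,\bar{\cL}_t^\delta H_t^\zeta]\) is linear in the operator and so applies directly to the signed \(\cB_t\) (using \(\cB_tH_t^\zeta=0\)); and if you do want nonnegative elementary rates, for \(S_i\le0\) the combined synchronized rate \(\frac12-S_i+\delta_iS_i=(\frac12-S_i)/(1-2\bdel S_i)\) is positive because of the explicit form of \(\delta_i\), not merely because \(\delta_i<\kappa\).
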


\begin{lemma}[Chen's weighted energy estimate {\cite[Lemma~11]{Chen2026}}]
\label{app:chen-lemma11}
Let \(\phi:\cube\to\{0,1\}\), and let \(a_t,b_t,m_t\) and
\(\lambda_{t,i}^\zeta\) be as in \cref{app:chen-lemma9,app:chen-lemma10}.
Define
\begin{align*}
  \Psi_a
  &:=
  \E\int_\theta^\To
  a_t^2\sum_i
  \lambda_{t,i}^{V_T}(V_t)
  \abs{\partial_i\phi(m_t(V_t,W_t,V_T))}^2\,dt,\\
  \Psi_b
  &:=
  \E\int_\theta^\To
  b_t^2\sum_i
  \lambda_{t,i}^{V_T}(V_t)
  \abs{\partial_i\phi(m_t(V_t,W_t,V_T))}^2\,dt .
\end{align*}
Then, with
\[
  \mathcal S_{\mathrm{Ch}}
  :=
  \E\int_\theta^{\StopT}
  \sum_i\bdel^2 S_i(t,V_{t-})^2\,dt,
\]
we have
\[
  \Psi_b
  \le
  \frac{e^{-2\tau}}{1-e^{-2\tau}}(\To-\theta),
  \qquad
  \Psi_a
  \lesssim
  1+\kappa\mathcal S_{\mathrm{Ch}}+\Psi_b.
\]
\end{lemma}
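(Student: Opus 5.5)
The plan is to run the energy-closure part of the proof of \cref{lem:localized-discrepancy} with \(E=\Omega\), keeping track of the explicit constants. The two bounds are handled separately: \(\Psi_b\) by a pointwise argument, and \(\Psi_a\) by an It\^o/Doob-transform identity for \((q_t^\zeta)^2\) followed by an absorption step.

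\emph{Bound on \(\Psi_b\).} Fix \((x,y,\zeta)\) and \(t\in[\theta,\To)\).
\begin{itemize}
\item The bridge control \eqref{eq:bridge-b-control} gives \(\lambda_{t,i}^\zeta(x)b_t^2\le \frac{e^{-2\tau}}{1-e^{-2\tau}}(1-m_t^{[i]}(x,y,\zeta)^2)\) for every \(i\).
\item Summing over \(i\) against \(|\partial_i\phi(m_t)|^2\), apply \cref{lem:level-one} at the interior point \(m_t(x,y,\zeta)\in(-1,1)^n\). This bounds the integrand of \(\Psi_b\) by \(\frac{e^{-2\tau}}{1-e^{-2\tau}}(\phi(m_t)-\phi(m_t)^2)\le \frac{e^{-2\tau}}{1-e^{-2\tau}}\cdot\frac14\).
\item Integrating over \([\theta,\To]\) and taking expectations gives the stated bound on \(\Psi_b\).
\end{itemize}
No stochastic calculus is needed for this part.

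\emph{Bound on \(\Psi_a\): the It\^o identity.} Fix \(\zeta\) with \(\Pp(V_T=\zeta)>0\) and work under \(\Pp^\zeta\), where the generator is \(\cL_t^{0,\zeta}+\1_{\{t\le\StopT\}}\cB_t^\zeta\) by \cref{app:chen-lemma10} (equivalently \cref{lem:terminal-conditioning}).
\begin{itemize}
\item Apply It\^o's formula (Dynkin's formula on a finite state space) to \((q_t^\zeta)^2(V_t,W_t)\) on \([\theta,\To-\eps]\).
\item The square identity \eqref{eq:bridge-square} turns the drift from \(\partial_t+\cL_t^{0,\zeta}\) into \(2a_t^2\sum_i\lambda_{t,i}^\zeta|\partial_i\phi(m_t)|^2\).
\item Averaging over \(\zeta=V_T\) then yields
\[
2\Psi_{a,\eps}=\E\bigl[(q^{V_T}_{\To-\eps})^2\bigr]-\E\bigl[(q^{V_T}_\theta)^2\bigr]-\E\int_\theta^{\To-\eps}\1_{\{t\le\StopT\}}\cB_t^{V_T}(q_t^{V_T})^2\,dt .
\]
\item Since \(0\le q\le1\), the two boundary terms contribute at most \(1\).
\end{itemize}

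\emph{Bound on \(\Psi_a\): the perturbation term and absorption.}
\begin{itemize}
\item Use \(|\Delta(q^2)|\le 2|\Delta q|\) together with the bridge finite differences \eqref{eq:bridge-delta-y}--\eqref{eq:bridge-delta-y-flipx}.
\item Use the coefficient bound \(|\delta_i(\1_{\{S_i>0\}}+r_{t,i}^\zeta\1_{\{S_i\le0\}})|^2\lesssim\kappa\,\bdel^2\lambda_{t,i}^\zeta\). It follows from \(\delta_i=\bdel\) when \(S_i>0\), and from \(r\delta_i=\bdel\lambda/(1-2\bdel S_i)\) with \(\lambda\le\kappa\) when \(S_i\le 0\).
\item Cauchy--Schwarz in \(i\), and then in \((t,\omega)\), bounds the perturbation term by \(C\sqrt{\kappa\mathcal S_{\mathrm{Ch}}}\,(\Psi_{a,\eps}+\Psi_{b})^{1/2}\).
\item This gives \(\Psi_{a,\eps}\lesssim 1+\sqrt{\kappa\mathcal S_{\mathrm{Ch}}}\,(\Psi_{a,\eps}+\Psi_b)^{1/2}\). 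Young's inequality \(uv\le\frac12v^2+Cu^2\) absorbs half of \(\Psi_{a,\eps}\) (finite, since the truncated integrand is bounded), leaving \(\Psi_{a,\eps}\lesssim 1+\kappa\mathcal S_{\mathrm{Ch}}+\Psi_b\).
\item The constant does not depend on \(\eps\), so monotone convergence as \(\eps\downarrow0\) gives the bound for \(\Psi_a\).
\end{itemize}

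\emph{Main obstacle.} The delicate step is justifying the It\^o identity under \(\Pp^\zeta\) with respect to the joint filtration, which is path-dependent through \(\StopT\). The key input is that \(\Pp(V_T=\zeta\mid\mathcal F_t)=H_t^\zeta(V_t)\) (\cref{lem:joint-filtration}), so the Doob transform acts only through \(V_t\). A secondary point is the endpoint \(t\to\To\): there \(b_t\to0\) while \(q_t^\zeta\) must converge to \(\phi(W_\To)\). The \(\eps\)-truncation avoids needing any regularity at \(\To\), since only the boundedness \(0\le q\le1\) is used.
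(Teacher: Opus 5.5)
Your proposal is correct and follows essentially the paper's own argument. It is the energy-closure step in the proof of \cref{lem:localized-discrepancy}, specialized to $E=\Omega$: the $b_t$-control plus the level-one inequality bound $\Psi_b$ pointwise, and $\Psi_a$ comes from the Doob-transformed It\^o identity for $(q_t^\zeta)^2$ on $[\theta,\To-\eps]$, Cauchy--Schwarz against $\mathcal S_{\mathrm{Ch}}$, and Young absorption. The only differences are cosmetic: you keep $\Psi_b$ on the right-hand side to match Chen's form, and you track the $\kappa$-dependence explicitly where the paper absorbs it into $C_\tau$.
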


\begingroup
\small
\bibliographystyle{plain}
\bibliography{reference}

@article{BallBartheBednorzOleszkiewiczWolff2013,
  author = {Ball, K. and Barthe, F. and Bednorz, W. and Oleszkiewicz, K. and Wolff, P.},
  title = {{$L^1$}-smoothing for the {Ornstein--Uhlenbeck} semigroup},
  journal = {Mathematika},
  volume = {59},
  number = {1},
  pages = {160--168},
  year = {2013}
}

@misc{Chen2026,
  author = {Chen, Yuansi},
  title = {Talagrand's convolution conjecture up to loglog via perturbed reverse heat},
  year = {2025},
  eprint = {2511.19374v2},
  archivePrefix = {arXiv},
  primaryClass = {math.PR},
  note = {arXiv:2511.19374v2. Version 2, revised May 1, 2026}
}

@article{EldanLee2018,
  author = {Eldan, Ronen and Lee, James R.},
  title = {Regularization under diffusion and anti-concentration of the information content},
  journal = {Duke Mathematical Journal},
  volume = {167},
  number = {5},
  pages = {969--993},
  year = {2018}
}

@article{Lehec2016,
  author = {Lehec, Joseph},
  title = {Regularization in {$L^1$} for the {Ornstein--Uhlenbeck} semigroup},
  journal = {Annales de la Facult{\'e} des sciences de Toulouse: Math{\'e}matiques},
  volume = {25},
  pages = {191--204},
  year = {2016}
}

@book{ODonnell2014,
  author = {O'Donnell, Ryan},
  title = {Analysis of Boolean Functions},
  publisher = {Cambridge University Press},
  year = {2014}
}

@article{Talagrand1989,
  author = {Talagrand, Michel},
  title = {A conjecture on convolution operators, and a non-{Dunford--Pettis} operator on {$L^1$}},
  journal = {Israel Journal of Mathematics},
  volume = {68},
  pages = {82--88},
  year = {1989}
}

@misc{Talagrand2016,
  author = {Talagrand, Michel},
  title = {Regularization from {$L^1$} by convolution},
  howpublished = {\url{https://michel.talagrand.net/prizes/convolution.pdf}},
  year = {2016}
}
\endgroup

\end{document}